\newcommand\cyr{%
\renewcommand\rmdefault{wncyr}%
\renewcommand\sfdefault{wncyss}%
\renewcommand\encodingdefault{OT2}%
\normalfont
\selectfont}
\DeclareTextFontCommand{\textcyr}{\cyr}
\DeclareFontFamily{OT1}{rsfs}{}
\DeclareFontShape{OT1}{rsfs}{n}{it}{<-> rsfs10}{}
\DeclareMathAlphabet{\mathscr}{OT1}{rsfs}{n}{it}
\numberwithin{equation}{section}
\newtheorem{theorem}{Theorem}[section]
\newtheorem{lemma}[theorem]{Lemma}
\newtheorem{proposition}[theorem]{Proposition}
\newtheorem{corollary}[theorem]{Corollary}
\newtheorem{problem}{Problem}
\theoremstyle{definition}
\newtheorem{definition}[theorem]{Definition}
\newtheorem{remark}[theorem]{Remark}
\theoremstyle{remark}
\newtheorem{observation}[theorem]{Observation}
\newtheorem{example}[theorem]{Example}
\newtheorem{acknowledgement}{Acknowledgement}
\newcommand{\Ass}{\operatorname{Ass}}
\newcommand{\Ann}{\operatorname{Ann}}
\newcommand{\fm}{\frak{m}}
\newcommand{\fp}{\frak{p}}
\newcommand{\fq}{\frak{q}}
\begin{document}
\title[Limit closure]{On the limit closure of a sequence of elements\\ in local rings}

\author[Nguyen Tu Cuong]{Nguyen Tu Cuong}
\address{Institute of Mathematics, Vietnam Academy of Science and Technology, 18 Hoang Quoc Viet, Hanoi,
Vietnam}
\email{ntcuong@math.ac.vn}

\author[Pham Hung Quy]{Pham Hung Quy}
\address{Department of Mathematics, FPT University, Hoa Lac Hi-Tech Park, Hanoi, Vietnam}
\email{quyph@fe.edu.vn}

\thanks{2020 {\em Mathematics Subject Classification\/}: 13H99, 13D45, 13B35, 13H15, 13D22.\\
This work is partially supported by a fund of Vietnam National Foundation for Science
and Technology Development (NAFOSTED) under grant number 101.04-2020.10.}

\keywords{Limit closure, system of parameters, monomial conjecture, determinantal map, local cohomology, unmixed ring.}

 
\begin{abstract} We present a study of the limit closure $(\underline{x})^{\lim}$ of a system of parameters $\underline{x}$ in local rings. Firstly, we answer the question about elements which are always contained in the limit closure of a system of parameters. Then we apply it to give a characterization of systems of parameters which is a generalization of previous results of Dutta and Roberts in \cite{DR} and of Fouli and Huneke in \cite{FH}. We also prove a topological characterization of unmixed local rings. In two dimensional case, we compute explicitly the limit closure of a system of parameters. Some interesting examples are given.
\end{abstract}

\maketitle

\section{Introduction}
System of parameters is a basic concept of local algebra. Let $(R, \fm)$ be a Noetherian local ring of dimension $t$ and $\underline{x} = x_1, \ldots, x_t$ a system of parameters. Understanding the relations of elements in a system of parameters is one of the most important problems in commutative algebra. Indeed, Hochster asked about a "simple" relation that cannot be satisfied by a system of parameters (cf. \cite{Ho}). This question is called the monomial conjecture and stated as follows. For for all $n \ge 1$ we have $(x_1 \ldots x_t)^n \notin (x_1^{n+1}, \ldots, x_t^{n+1})$. Hochster proved the monomial conjecture when $R$ contains a field. Recently, Andr\'{e} \cite{An18} confirmed this conjecture in the full generality. Moreover it is easy to see that
$$(x_1^2, \ldots, x_t^2) : (x_1 \ldots x_t) \subseteq (x_1^3, \ldots, x_t^3) : (x_1 \ldots x_t)^2 \subseteq \cdots \subseteq (x_1^{n+1}, \ldots, x_t^{n+1}) : (x_1 \ldots x_t)^n \subseteq \cdots.$$
Thus it is natural to consider the following
$$(x_1, \ldots, x_t)^{\lim} := \bigcup_{n>0}\big((x_1^{n+1}, \ldots, x_t^{n+1}): (x_1 \ldots x_t)^n \big).$$
 We call $(x_1, \ldots, x_t)^{\lim}$ (or $(\underline{x})^{\lim}$) the {\it limit closure} of the sequence $\underline{x} = x_1, \ldots, x_t$. By the monomial theorem of Hochster and Andr\'{e} we know that $1$ {\it cannot} be contained in $(x_1, \ldots, x_t)^{\lim}$ for any system of parameters $\underline{x} = x_1, \ldots, x_t$.

It is worth to note that if $R$ is Cohen-Macaulay then $(x_1, \ldots, x_t)^{\lim} = (x_1, \ldots, x_t)$ and the converse holds true (cf. \cite{Ha,CHL}). The motivation of our paper is a problem which can be seen as the opposite perspective of the monomial conjecture: Determine elements which are {\it always} contained in $(x_1, \ldots, x_t)^{\lim}$ for all systems of parameters $\underline{x} = x_1, \ldots, x_t$.
For convenience we shall consider this problem for modules. Let $(R,\frak m)$ be a Noetherian local ring, $M$ a finitely
generated $R$-modules of dimension $d$. Let $\underline{x} = x_1, \ldots, x_r$ be a sequence of $r$ elements
of $R$. Then the {\it limit closure} of the sequence $\underline{x}$ in $M$ is a submodule of $M$ defined by
$$(\underline{x})_M^{\lim} = \bigcup_{n>0}\big((x_1^{n+1}, \ldots, x_r^{n+1})M: (x_1\ldots x_r)^n \big).$$
The following problem is the starting point of this work.

\begin{problem}\label{problem} Let $(R,\frak m)$ be a Noetherian local ring, $M$ a finitely
generated $R$-module of dimension $d$. What is
$$\bigcap_{\underline{x}}(x_1, \ldots, x_d)_M^{\lim},$$
where $\underline{x} = x_1, \ldots, x_d$ runs over all systems of parameters of $M$?
\end{problem}
We will show that the above intersection can be interpreted by the primary decomposition of the zero submodule of $M$. Let $(0) = \cap_{\frak p \in \mathrm{Ass}\,M}N(\frak p)$ be a reduced primary decomposition of the zero submodule of $M$. The {\it unmixed component} $U_M(0)$ of $M$  is a submodule defined by
$$U_M(0) =
\bigcap_{\frak p \in \mathrm{Ass}M, \dim R/\fp = d}N(\frak p)$$
It should be noted that $U_{M}(0)$ is just the largest submodule of $M$ of dimension less than $\dim M = d$. The answer for Problem \ref{problem} is as follows.

\begin{theorem}\label{thmA}
Let $(R, \fm)$ be a Noetherian local ring and $M$ a finitely generated $R$-module of dimension $d$. Let $\underline{x} = x_1,
\ldots,x_d$ be a system of parameters of $M$. Then
$$\bigcap_{n>0}(x_1^n, \ldots, x_d^n)_M^{\lim} = U_M(0).$$
\end{theorem}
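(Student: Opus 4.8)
The plan is to translate everything into the top local cohomology module. Since $\underline x$ is a system of parameters of $M$, the ideals $(\underline x)$ and $\fm$ have the same radical on $\Supp M$, so $H^d_{(\underline x)}(M)=H^d_\fm(M)=:A$, and $A$ is Artinian. Realizing $A$ as the direct limit $\varinjlim_k M/(x_1^k,\dots,x_d^k)M$ with transition maps given by multiplication by $x:=x_1\cdots x_d$, one obtains for every $n$ a natural $R$-linear map $\phi_n\colon M\to A$, namely the composite $M\twoheadrightarrow M/(x_1^n,\dots,x_d^n)M\to A$. A direct check with the defining colons shows
\[
\ker\phi_n=(x_1^n,\dots,x_d^n)^{\lim}_M,
\]
and the identity $\phi_n=x\cdot\phi_{n+1}$ (coming from the transition map) shows that the chain $\ker\phi_1\supseteq\ker\phi_2\supseteq\cdots$ is \emph{descending}. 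Thus the theorem becomes the computation of $K:=\bigcap_{n>0}\ker\phi_n$.

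The inclusion $U_M(0)\subseteq K$ is the easy half. Since $U_M(0)$ is the largest submodule of $M$ of dimension $<d$, Grothendieck vanishing gives $H^d_\fm(U_M(0))=0$; by naturality of $\phi_n$ the restriction $\phi_n|_{U_M(0)}$ factors through $H^d_\fm(U_M(0))=0$, so $U_M(0)\subseteq\ker\phi_n$ for every $n$, hence $U_M(0)\subseteq K$. For the reverse inclusion it suffices, again because $U_M(0)$ is the largest submodule of dimension $<d$, to prove the dimension bound $\dim K<d$; this automatically yields $K\subseteq U_M(0)$, and hence equality.

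The dimension bound is the crux. First I would reduce to the case that $R$ is complete: each $\ker\phi_n$ is a finite colon (the increasing union defining it stabilises by Noetherianity), so it commutes with the faithfully flat map $R\to\widehat R$, giving $\widehat{\ker\phi_n}=\ker\phi_n^{\widehat M}$; since $\widehat{K}\subseteq\bigcap_n\widehat{\ker\phi_n}$ and completion preserves dimension, it is enough to bound $\dim_{\widehat R}\bigcap_n\ker\phi_n^{\widehat M}$ (this also sidesteps the delicate comparison of $U_M(0)$ with $U_{\widehat M}(0)$). Over complete $R$ I would then pass to Matlis duals. Each $\im\phi_n$ is a finitely generated submodule of the Artinian module $A$, hence of finite length, so $M/\ker\phi_n$ has finite length; writing $K_M:=A^\vee$ for the (finitely generated, unmixed, $d$-dimensional) canonical module of $M$, dualizing $0\to\ker\phi_n\to M\to\im\phi_n\to 0$ produces an \emph{ascending} chain of finite-length submodules $U_n:=(\im\phi_n)^\vee\subseteq M^\vee$ with $(\ker\phi_n)^\vee=M^\vee/U_n$ and $U_n=\theta_n(K_M)$, where $\theta_n:=\phi_n^\vee$ satisfies $\theta_n=\theta_{n+1}\circ(x\cdot)$. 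Dualizing the intersection turns it into a union, $K^\vee=M^\vee/\bigcup_nU_n$, and the relation $\theta_n=\theta_{n+1}\circ(x\cdot)$ ties $\bigcup_nU_n$ to the $x$-power behaviour of $K_M$; I expect that Krull's intersection theorem on the finitely generated module $K_M$, namely $\bigcap_k x^kK_M=0$, together with the unmixedness of $K_M$ (which leaves no $d$-dimensional component after saturating by $x$), forces $\dim R/\Ann(K^\vee)<d$, that is $\dim K<d$.

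The main obstacle is precisely this last step, and it is conceptual rather than computational: localization does \emph{not} commute with the infinite intersection $\bigcap_n\ker\phi_n$. Indeed at a minimal prime $\fp$ of $M$ with $\dim R/\fp=d$ one has $H^d_\fm(M)_\fp=H^d_{\fp R_\fp}(M_\fp)=0$ because $\dim M_\fp=0$, so \emph{each} $(\ker\phi_n)_\fp=M_\fp$ is everything, and no prime-by-prime argument can detect the drop in dimension --- the drop appears only in the limit (already visible for $d=1$, where $K=H^0_\fm(M)$ while every $(\ker\phi_n)_\fp=M_\fp$). This is exactly what the passage to the canonical module $K_M$ is designed to repair, converting the recalcitrant descending intersection into an ascending union on a Noetherian module where Krull's theorem bites. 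An alternative route is an induction on $d$, peeling off one parameter and using the long exact sequence of $H^\bullet_\fm$, but the same failure of $\bigcap_n$ to commute with localization is what makes the inductive bookkeeping delicate.
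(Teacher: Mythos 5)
Your reductions are all sound, and in fact the first half of your argument coincides with the paper's: the identification $\ker\phi_n=(\underline{x}^{[n]})_M^{\lim}$ is Remark \ref{R2.2}(ii), the inclusion $U_M(0)\subseteq K$ is Corollary \ref{C2.6}, the remark that it suffices to prove $\dim K<d$ is how the paper also concludes, and your Matlis-dual translation over $\widehat R$ (namely $K^\vee=M^\vee/\bigcup_n U_n$ with $U_n=\theta_n(K_M)$, $\theta_n=\theta_{n+1}\circ(x\,\cdot)$) is correct. But the proof stops exactly where the theorem begins: the dimension bound $\dim K<d$ is introduced by ``I expect that Krull's intersection theorem \dots forces \dots'', and this expectation does not follow from the ingredients you name. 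The relation $\theta_n=\theta_{n+1}\circ(x\,\cdot)$ gives $U_n=xU_{n+1}$, hence the union $U=\bigcup_nU_n$ satisfies $xU=U$; applying $\theta_n$ to Krull's intersection $\bigcap_kx^kK_M=0$ yields nothing beyond $\theta_n(0)=0$, and the relation $U_n=x^kU_{n+k}$ only re-proves $\bigcap_kx^kU_n=0$, which is automatic because each $U_n\cong\bigl(M/(\underline{x}^{[n]})_M^{\lim}\bigr)^\vee$ already has finite length. Since the maps $\theta_n$ change with $n$, there is no single finitely generated module on which Krull (or Artin--Rees, or Chevalley) can be invoked to control the union $U$: the pathology you correctly diagnosed on the primal side (the infinite intersection does not commute with localization) reappears in mirror form on the dual side (an ascending union of finite-length submodules of an Artinian module need not stabilize and is not governed by any one of the $\theta_n$). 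Unmixedness of $K_M$, $\bigcap_n\ker\theta_n=0$, and $xU=U$ are soft facts that do not formally imply $\dim (M^\vee/U)<d$; indeed, via the observation that $U$ is precisely the image of the natural map from the localization $(K_M)_x$ to $M^\vee$, the assertion $\dim(M^\vee/U)<d$ is not a lemma on the way to Theorem \ref{thmA} --- it \emph{is} Theorem \ref{thmA}.

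What closes this gap in the paper is a quantitative input with no counterpart in your sketch: Theorem \ref{C} (due to Cuong \cite{C} and Cuong--Minh \cite{CM}), asserting that the functions $I_{M,\underline{x}}(n)=\ell(M/(\underline{x}^{[n]})M)-e(\underline{x}^{[n]};M)$ and $J_{M,\underline{x}}(n)=e(\underline{x}^{[n]};M)-\ell(M/(\underline{x}^{[n]})_M^{\lim})$ are bounded above by polynomials in $n$ of degree at most $d-1$ and $d-2$ respectively. Setting $N=K$, $\overline{M}=M/U_M(0)$ and $M'=M/N$, Proposition \ref{P2.7} gives $\ell(M/(\underline{x}^{[n]})_M^{\lim})=\ell(M'/(\underline{x}^{[n]})_{M'}^{\lim})=\ell(\overline{M}/(\underline{x}^{[n]})_{\overline{M}}^{\lim})$, and the two polynomial bounds sandwich this common length between $n^de(\underline{x};\overline{M})-g(n)$ and $n^de(\underline{x};M')+f(n)$ with $\deg f,\deg g<d$; comparing leading terms forces $e(\underline{x};\overline{M})=e(\underline{x};M')$, hence $e(\underline{x};N/U_M(0))=0$, hence $\dim N<d$. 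To salvage your duality framework you would have to supply an input of comparable strength (for instance, a proof that the cokernel of $(K_M)_x\to M^\vee$ has dimension less than $d$); as it stands, the heart of the theorem is missing.
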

This intersection formula has many applications. Firstly, we generalize the previous works of Dutta and Roberts \cite{DR} and of Fouli and Huneke \cite{FH} about relation which is satisfied by systems of parameters. Let $\underline{x} = x_1, \ldots, x_t$ be a system of parameters of $R$ and $\underline{y} = y_1, \ldots, y_t$ a sequence of elements such that $(\underline{y}) \subseteq (\underline{x})$. We have a matrix $A = (a_{ij})$, $a_{ij} \in R$, $1 \leq i,j
\leq r$ such that $y_i= \sum_{j=1}^n a_{ij}x_j$, it means
$\mathbf{y} = A \mathbf{x}$, where $\mathbf{x}$ (res. $\mathbf{y}$)
denotes the column vector with entries $x_1, \ldots, x_r$ (res. $y_1, \ldots, y_r$). We abbreviate it by writing
$(\underline{y}) \overset{A}{\subseteq} (\underline{x})$. It easily follows from Crammer's rule that
$\det (A) . (\underline{x}) \subseteq (\underline{y})$. Therefore, we
obtain a determinantal map
$$\det (A)  \, :\, R/(\underline{x}) \to R/(\underline{y}), \quad  m + (\underline{x})
\mapsto \det (A)  m + (\underline{y}).$$
When $R$ is Cohen-Macaulay, Dutta and Roberts in \cite{DR} proved that $\underline{y}$ is a system of parameters if and only if the determinantal map $\det(A)$ is injective. In \cite{FH} Fouli and Huneke extended this result for any local ring by using $(\underline{x})^{\lim}$ instead of $(\underline{x})$. By \cite[5.1.15]{S} we also
have a homomorphism
$$\det (A)  \, :\, R/(\underline{x})^{\lim} \to R/(\underline{y})^{\lim},$$
which is independent of the choice of the matrix $A$. The following is a generalization of Fouli and Huneke's result.

\begin{theorem}\label{thmC}
  Let $(R,\frak m)$ be a catenary equidimensional local ring of dimension $t$. There
exists a positive integer $\ell$, which depends only on $R$, with property: whenever $\underline{x} =
x_1, \ldots ,x_t$ is a system of parameters of $R$ with
$(\underline{x}) \subseteq \frak m^\ell$ and $\underline{y}
=y_1, \ldots,y_t$ a sequence of elements such that $(\underline{y})
\overset{A}{\subseteq} (\underline{x})$ the following statements are
equivalent:
\begin{enumerate}[{(i)}]\rm
    \item {\it $\underline{y}$ forms a system of parameters of $R$;}
    \item {\it The determinantal map $R/(\underline{x})^{\lim} \xrightarrow{\det
A} R/(\underline{y})^{\lim}$ is injective.}
\end{enumerate}
\end{theorem}
As another application of Theorem \ref{thmA}, we give a characterization of {\it unmixed} local rings. In local algebra we often pass to the $\fm$-adic completion $\widehat{R}$ to inherit several good properties of complete local rings. A local ring $(R, \fm)$ is called {\it unmixed} (in sense of Nagata) if $\dim \widehat{R}/\frak P = \dim \widehat{R}$ for all $\frak P \in \Ass \widehat{R} $ i.e. $U_{\widehat{R}}(0) = 0$. Almost local domains in commutative algebra are unmixed. However Nagata in \cite[Example 2, pp. 203--205]{N} constructed a non-unmixed local domain. The following is a characterization of unmixed local rings in terms of the topology defined by limit closures.

\begin{theorem}\label{thmD}
Let $(R,\frak m)$ be a Noetherian local ring of dimension $t$. Then $R$ is
unmixed if and only if the $\frak m$-adic topology is equivalent to the topology defined by $\{(x_1^n, \ldots, x_t^n)^{\lim}\}_{n \ge 1}$ for any system of parameters $\underline{x} = x_1, \ldots, x_t$ of $R$.
\end{theorem}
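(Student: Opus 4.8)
The plan is to descend to the completion $\widehat R$ and apply Chevalley's theorem there, feeding in Theorem \ref{thmA} to pin down the relevant intersection. I would first note that the whole statement is insensitive to completion. Since $R\to\widehat R$ is faithfully flat, for ideals $I,J$ one has $I\subseteq J$ iff $I\widehat R\subseteq J\widehat R$; as equivalence of two filtrations is just a conjunction of such inclusion conditions, the filtration $\{(x_1^n,\ldots,x_t^n)^{\lim}\}$ is equivalent to $\{\frak m^n\}$ on $R$ iff $\{(x_1^n,\ldots,x_t^n)^{\lim}\widehat R\}$ is equivalent to $\{\widehat{\frak m}^n\}$ on $\widehat R$. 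Because colon ideals and directed unions commute with the flat base change $-\otimes_R\widehat R$, one gets $(x_1^n,\ldots,x_t^n)^{\lim}\widehat R=(x_1^n,\ldots,x_t^n)^{\lim}_{\widehat R}$, so the extended filtration is again the limit-closure filtration of the same (still parametric) sequence, now in $\widehat R$. Thus I may assume $R$ is complete, where \emph{unmixed} means exactly $U_R(0)=0$.

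Next I would give the filtration a local-cohomology description, which simultaneously shows it defines a topology. Put $q_n=(x_1^n,\ldots,x_t^n)$ and use $H^t_{\frak m}(R)=\varinjlim_n R/q_n$ with transition maps multiplication by $x_1\cdots x_t$. A direct \v{C}ech-complex check identifies $q_n^{\lim}$ with the kernel of the canonical map $\phi_n\colon R\to H^t_{\frak m}(R)$. Writing $\mu$ for multiplication by $(x_1\cdots x_t)^{cn-n}$, the identity $\phi_n=\phi_{cn}\circ\mu$ together with $R$-linearity yields $q_{cn}^{\lim}\subseteq q_n^{\lim}$ for all $c\ge1$; hence $\{q_{n!}^{\lim}\}_{n\ge1}$ is a \emph{decreasing} chain cofinal in $\{q_n^{\lim}\}$, so $\bigcap_n q_{n!}^{\lim}=\bigcap_n q_n^{\lim}$ and the two define the same topology.

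Now the equivalence in the complete case. By Theorem \ref{thmA} with $M=R$, the decreasing chain satisfies $\bigcap_n q_{n!}^{\lim}=U_R(0)$. If $R$ is unmixed this intersection is $0$, so Chevalley's theorem---valid because $R$ is complete---gives, for each $k$, an $n$ with $q_{n!}^{\lim}\subseteq\frak m^k$; as each $q_n^{\lim}\supseteq q_n$ is $\frak m$-primary and hence contains a power of $\frak m$, the two filtrations are mutually cofinal, i.e.\ the topologies coincide. Conversely, if the topologies are equivalent then for each $k$ some $q_n^{\lim}\subseteq\frak m^k$, whence $\bigcap_n q_n^{\lim}\subseteq\bigcap_k\frak m^k=0$ by Krull's intersection theorem; by Theorem \ref{thmA} this says $U_R(0)=0$, so $R$ is unmixed.

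The crux---and the main obstacle---is the gap between $\bigcap_n q_n^{\lim}=0$ and genuine equivalence of topologies. Vanishing of the intersection only makes the limit-closure topology separated, and this can occur without equivalence precisely in the mixed situation: in Nagata's example $U_R(0)=0$ while $U_{\widehat R}(0)\neq0$, so the filtration is separated on $R$ yet the topologies are inequivalent. What closes the gap is completeness, through Chevalley's theorem, which converts zero intersection of a decreasing filtration into its being finer than the $\frak m$-adic topology; this is exactly why the correct hypothesis lives on $\widehat R$, and why the identification $\bigcap_n(q_n\widehat R)^{\lim}=U_{\widehat R}(0)$ coming from Theorem \ref{thmA} makes unmixedness the precise assumption. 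The only computations I would still need to carry out are the commutation of limit closure with completion and the identification $q_n^{\lim}=\ker\phi_n$, both of which are routine.
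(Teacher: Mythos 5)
Your proof is correct and follows essentially the same route as the paper's: both identify $\bigcap_n (\underline{x}^{[n]})^{\lim}_{\widehat R}$ with $U_{\widehat R}(0)$ via Theorem \ref{thmA}, apply Chevalley's lemma on the completion to get the forward direction, and use Krull's intersection theorem for the converse. The differences are purely organizational — you formally transfer the whole statement to $\widehat R$ by faithful flatness before arguing, while the paper contracts inclusions from $\widehat R$ back to $R$ within each direction, and you spell out the decreasingness of the filtration (needed for Chevalley) and the commutation of limit closure with completion, which the paper leaves implicit.
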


It should be noted that the limit closure is very complicated to compute. In fact by the works of Hochster and Andr\'{e} we know $(\underline{x})^{\lim} \subseteq \fm$ or $\ell(R/(\underline{x})^{\lim})>0$ for any system of parameters $\underline{x}$. In the two last sections of this paper we give some explicit computations for limit closures. By Theorem \ref{thmA} and passing to $\widehat{R}$, if necessary, we always can reduce to the case of unmixed rings. When $\dim R = 2$ we prove the following result.
\begin{theorem}\label{thmE} Let $(R, \frak m)$ be an unmixed local ring of dimension $d=2$ with the $S_2$-ification $S$. Let $x, y$ be a system of parameters $R$. Then we have the following.
\begin{enumerate}[{(i)}]\rm
\item $(x,y)^{\lim} = (x,y)S \cap R$.
\item $\ell (R/(x,y)^{\lim}) = e(x,y;R) - \ell (H^1_{\frak m}(R)/(x,y)H^1_{\frak m}(R))$, where $e(x,y;R)$ is the multiplicity of $(x,y)$.
\item $\ell ((x, y)^{\lim}/(x,y)) = \ell (H^1(x,y;H^1_{\frak m}(R)))$.
\end{enumerate}
\end{theorem}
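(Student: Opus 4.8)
The plan is to deduce all three formulas from two structural inputs about the $S_2$-ification $S$: that $S$ is module-finite over $R$, Cohen--Macaulay of dimension $2$, with $S/R$ of finite length, together with the resulting identification $H^1_{\fm}(R)\cong S/R$. For the latter I would apply local cohomology to $0\to R\to S\to S/R\to 0$. Since $R$ is unmixed of dimension $2$ we have $\depth R\ge 1$, so $H^0_{\fm}(R)=0$, while $S$ Cohen--Macaulay of dimension $2$ gives $H^0_{\fm}(S)=H^1_{\fm}(S)=0$; as $S/R$ has finite length the long exact sequence collapses to $H^1_{\fm}(R)\cong S/R$. Write $H:=H^1_{\fm}(R)$, so $\ell(H)<\infty$; note also that $x,y$ is a system of parameters of the (semilocal) Cohen--Macaulay ring $S$, hence a regular sequence on $S$.

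For (i) I would prove two inclusions. If $a\in(x,y)^{\lim}$ then $(xy)^na\in(x^{n+1},y^{n+1})R\subseteq(x^{n+1},y^{n+1})S$ for some $n$, so $a\in(x,y)^{\lim}_S=(x,y)S$ (the last equality because $S$ is Cohen--Macaulay, as recalled in the introduction), giving $a\in(x,y)S\cap R$. Conversely, since $S/R$ has finite length there is $k$ with $(x,y)^kS\subseteq R$, hence $y^kS\subseteq R$ and $x^kS\subseteq R$. Given $a=bx+cy\in(x,y)S\cap R$ with $b,c\in S$, for $n\ge k$ the elements $by^n$ and $cx^n$ lie in $R$, so $(xy)^na=x^{n+1}(by^n)+y^{n+1}(cx^n)\in(x^{n+1},y^{n+1})R$, whence $a\in(x,y)^{\lim}$. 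Thus $(x,y)^{\lim}=(x,y)S\cap R$.

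For (iii) I would feed $0\to R\to S\to H\to 0$ into the Koszul cochain complex $K^\bullet(x,y;-)$ and read off the long exact cohomology sequence. Because $x,y$ is a regular sequence on the Cohen--Macaulay ring $S$, one has $H^0(x,y;S)=H^1(x,y;S)=0$ and $H^2(x,y;S)=S/(x,y)S$. The sequence then yields
$$H^1(x,y;H)\;\cong\;\ker\!\big(R/(x,y)R\xrightarrow{\ \mathrm{nat}\ }S/(x,y)S\big)=\big((x,y)S\cap R\big)/(x,y)R,$$
and by (i) the right-hand side is exactly $(x,y)^{\lim}/(x,y)$. Taking lengths gives $\ell((x,y)^{\lim}/(x,y))=\ell(H^1(x,y;H^1_{\fm}(R)))$, which is (iii).

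Finally, for (ii) I would combine (iii) with the Serre multiplicity identity. The same long exact sequence gives $H^1(x,y;R)\cong H^0(x,y;H)$, while $H_2(x,y;R)=(0:_R(x,y))\subseteq H^0_{\fm}(R)=0$; hence $e(x,y;R)=\chi(x,y;R)=\ell(R/(x,y))-\ell(H_1(x,y;R))$, which by self-duality $\ell(H_1(x,y;R))=\ell(H^0(x,y;H))$ rearranges to $\ell(R/(x,y))=e(x,y;R)+\ell(H^0(x,y;H))$. Then
$$\ell(R/(x,y)^{\lim})=\ell(R/(x,y))-\ell((x,y)^{\lim}/(x,y))=e(x,y;R)+\ell(H^0(x,y;H))-\ell(H^1(x,y;H)),$$
and since $H$ has finite length with $2>\dim H=0$, the Koszul Euler characteristic vanishes: $\ell(H^0(x,y;H))-\ell(H^1(x,y;H))+\ell(H^2(x,y;H))=0$. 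As $H^2(x,y;H)=H/(x,y)H$, this collapses to (ii). The main obstacle I expect lies not in the homological bookkeeping but in the foundational input: one must establish that the $S_2$-ification exists with the stated properties and that $H^1_{\fm}(R)\cong S/R$, taking care that $S$ is only semilocal when invoking its Cohen--Macaulayness and the regularity of $x,y$ on it; once these are in place, (iii) and (ii) follow formally from a single Koszul long exact sequence together with Serre's theorem and the vanishing of $\chi(x,y;-)$ on the finite-length module $H^1_{\fm}(R)$.
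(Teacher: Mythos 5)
Your proof is correct, but it reaches the three statements by a genuinely different route than the paper. For (i) the paper quotes its Theorem 7.2, proved by a local-cohomology diagram: the injections $R/(x,y)^{\lim}\hookrightarrow H^2_{\frak m}(R)$ and $S/(x,y)^{\lim}_S\hookrightarrow H^2_{\frak m}(S)$ fit into a commutative square whose right vertical map is bijective because $H^{1}_{\frak m}(S/R)=H^{2}_{\frak m}(S/R)=0$, forcing $R/(x,y)^{\lim}\to S/(x,y)^{\lim}_S$ to be injective; that argument works in any dimension $d$ as long as $\dim S/R\le d-2$. Your element-chase (colon-capturing for the regular sequence $x,y$ on $S$ in one direction, and $(x,y)^{k}S\subseteq R$ from $\ell(S/R)<\infty$ in the other) is more elementary but genuinely two-dimensional, since it needs $S/R$ to be $(x,y)$-torsion. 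For (ii) and (iii) your order of deduction is the reverse of the paper's: the paper proves (ii) first, by tensoring $0\to R\to S\to H^1_{\frak m}(R)\to 0$ with $R/(x,y)$ and using (i) to identify the kernel of $R/(x,y)\to S/(x,y)S$ with $(x,y)^{\lim}/(x,y)$, so that (ii) is just additivity of lengths plus $\ell(S/(x,y)S)=e(x,y;S)=e(x,y;R)$; it then deduces (iii) from (ii) via a hands-on one-dimensional reduction ($R'=R/(x)$, $H^0_{\frak m}(R')\cong 0:_{H^1_{\frak m}(R)}x$, and so on) proving $\ell(R/(x,y))=e(x,y;R)+\ell\bigl(0:_{H^1_{\frak m}(R)}(x,y)\bigr)$, followed by the vanishing of the Koszul Euler characteristic on $H^1_{\frak m}(R)$. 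You instead read (iii) directly off the Koszul long exact sequence of $0\to R\to S\to H^1_{\frak m}(R)\to 0$ together with (i), and then recover (ii) from (iii) using Serre's theorem $e(x,y;R)=\chi(x,y;R)$, self-duality of the Koszul complex, and the same Euler-characteristic vanishing. What each buys: your route is more uniform, since a single long exact sequence does all the homological work and no explicit length computation is needed, whereas the paper's proof of (i) is dimension-free and its proof of (ii) requires nothing beyond a short exact sequence and additivity of multiplicity. Both arguments rest on the same foundational inputs --- $S$ is a maximal Cohen--Macaulay $R$-module and $S/R\cong H^1_{\frak m}(R)$ has finite length --- which you verify rather than cite, matching the facts the paper records just before its proof.
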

The paper is organized as follows. In Section 2 we prove some useful properties of limit closure. The main technique is understanding the limit closure via the canonical map from Koszul cohomology to local cohomology. Then the vanishing theorems of local cohomology is very useful to prove Theorem \ref{thmA}. Section 3 is devoted for Theorem \ref{thmA} and its variants. We prove Theorem \ref{thmC} in Section 4. We also provide an example to claim that the catenary condition is essential. Theorem \ref{thmD} is proved in Section 5. In Section 6 we first consider the relation between of limit closures of a system of parameters in $R$ and its $S_2$-ification. Then we apply the obtained result to prove Theorem \ref{thmE}. In the last Section we compute an explicit example of a limit closure. 

Throughout this paper, $R$ is a commutative Noetherian ring and $M$ is a finitely generated $R$-module. The set of associated primes of $M$ is denoted by $\Ass M$. We also denote $\mathrm{Assh}M = \{\fp \in \Ass M \mid \dim R/\fp = \dim M\}$. For a sequence of elements $\underline{x} = x_1, \ldots, x_r$ and a positive integer $n$ we denote by $\underline{x}^{[n]}$ the sequence $x_1^n, \ldots, x_r^n$. About concepts of commutative algebra we follow \cite{BH98,Mat}. For local cohomology we refer to \cite{BS98}. 

\begin{acknowledgement} The author is deeply grateful to the referee for her/his careful reading and many value suggestions. This paper was written while the authors visited Vietnam Institute for Advanced Study in Mathematics (VIASM), they would like to thank VIASM for the very kind support and hospitality.
\end{acknowledgement}
\section{Basic properties}
Throughout this section, $R$ is a Noetherian ring, and $M$ is a
finitely generated $R$-modules. Let $\underline{x}
= x_1, \ldots, x_r$ be a sequence of $r$ elements of $R$. The following is the main object of this paper.
\begin{definition}[\cite{Hu}] \rm The {\it limit closure} of the sequence $\underline{x}$ in $M$ is a submodule of $M$ defined by
$$(\underline{x})_M^{\lim} = \bigcup_{n>0}\big((\underline{x}^{[n+1]})M: (x_1 \ldots x_r)^n \big),$$
when $M = R$ we write $(\underline{x})^{\lim}$ for short.
\end{definition}
The notion of limit closure is well-defined since
$$(\underline{x}^{[2]})M: (x_1 \ldots x_r) \subseteq (\underline{x}^{[3]})M: (x_1 \ldots x_r)^2 \subseteq \cdots \subseteq (\underline{x}^{[n+1]})M: (x_1 \ldots x_r)^n \subseteq \cdots. $$
 By the Noetherianess 
$$(\underline{x})_M^{\lim} = (\underline{x}^{[s+1]})M: (x_1 \ldots x_r)^s$$ for some $s \ge 1$.

\begin{remark}\label{R2.2}
\begin{enumerate}[{(i)}]
\item When $(R, \fm)$ is a local ring and $x_1, \ldots, x_r \in \fm$, it is well-known that $(\underline{x})^{\lim}_M = (\underline{x})M$ if and only if $\underline{x}$ is an $M$-sequence.

\item The notion of limit closure appears naturally when we consider local cohomology as the limit of Koszul cohomology. For a sequence $\underline{x} = x_1, \ldots, x_r$. We have a direct system
$\{M/(\underline{x}^{[n]})M\}_{n \geq 1}$ given by the determinantal
maps
$$(x_1 \ldots x_r)^{m-n}\, :\, M/(\underline{x}^{[n]})M {\longrightarrow} M/(\underline{x}^{[m]})M$$
for $1 \leq n \leq m$. Then the kernel of the canonical map
$$M/(\underline{x})M \to \lim_{\longrightarrow}M/(\underline{x}^{[n]})M \cong H^r_{(\underline{x})}(M)$$
is $(\underline{x})_M^{\lim}/(\underline{x})M$, where
$H^i_{(\underline{x})}(M)$ is the $i$-th local cohomology of $M$
with support in $(\underline{x})$.  We obtain the  induced direct
system $\{M/(\underline{x}^{[n]})_M^{\lim}\}_{n \geq 1}$ with injective maps and
$$\lim_{\longrightarrow}M/(\underline{x}^{[n]})_M^{\lim} \cong H^r_{(\underline{x})}(M).$$
Therefore we can consider
$M/(\underline{x}^{[n]})_M^{\lim}$ as a
submodule of $H^r_{(\underline{x})}(M)$. Hence
$$\mathrm{Ann}(H^r_{(\underline{x})}(M)) = \cap_{n\geq 1}\mathrm{Ann}(M/(\underline{x}^{[n]})_M^{\lim}).$$
In particular, $\mathrm{Ann}(H^r_{(\underline{x})}(R)) = \cap_{n\geq
1}(\underline{x}^{[n]})^{\lim}$.

\item Let $(R, \fm)$ be a local ring and $\underline{x}$ a system of parameters of $R$. The Hochster monomial conjecture is equivalent to say that $(\underline{x})^{\lim} \subseteq \frak m$ for all systems of parameters $\underline{x}$. By Grothendieck's non-vanishing theorem we have $H^t_{\fm}(R) \neq 0$, here $t = \dim R$. According to (ii), for each system of parameters $\underline{x} = x_1, \ldots, x_t$ there exists a positive integer $n_0$, depending on $\underline{x}$), such that $(\underline{x}^{[n]})^{\lim} \subseteq \fm$ for all $n \ge n_0$.
\end{enumerate}
\end{remark}

Let $\underline{y} = y_1, \ldots, y_r$ be another sequence of
elements such that $(\underline{y}) \subseteq (\underline{x})$. Then
there exists a matrix $A = (a_{ij})$, $a_{ij} \in R$, $1 \leq i,j
\leq r$ such that $y_i= \sum_{j=1}^n a_{ij}x_j$, it means
$\mathbf{y} = A \mathbf{x}$, where $\mathbf{x}$ (res. $\mathbf{y}$)
denotes the column vector with entries $x_1, \ldots, x_r$ (res. $y_1, \ldots, y_r$). Following \cite{FH}, we abbreviate it by writing
$(\underline{y}) \overset{A}{\subseteq} (\underline{x})$. It easily follows from Crammer's rule that
$\det (A) (\underline{x}) \subseteq (\underline{y})$. Therefore, we
obtain a canonical map
$$\det (A)  \, :\, M/(\underline{x})M \to M/(\underline{y})M, \quad  m + (\underline{x})M
\mapsto \det (A)  m + (\underline{y})M .$$ By \cite[5.1.15]{S} we also
have that $\det (A)  (\underline{x})_M^{\lim} \subseteq
(\underline{y})_M^{\lim}$. Hence we obtain a homomorphism
$$\det (A)  \, :\, M/(\underline{x})_M^{\lim} \to M/(\underline{y})_M^{\lim},$$
which is independent of the choice of the matrix $A$. The map $\det (A) $ is called a {\it determinantal map}.

\begin{remark}\label{R2.3}
Let $(\underline{y})
\overset{A}{\subseteq} (\underline{x})$ are sequences such that
$\sqrt{(\underline{x})} = \sqrt{(\underline{y})}$ i.e.
$(\underline{x}^{[n]}) \overset{B}{\subseteq} (\underline{y})$ for
some $B$ and $n$. Then the determinantal map $\det (A)  :
M/(\underline{x})_M^{\lim} \to M/(\underline{y})_M^{\lim}$ is
injective (cf. \cite[Lemma 3.1]{CHL}). Therefore
$(\underline{x})_M^{\lim} = (\underline{y})_M^{\lim}:_M \det (A) $. Hence
$(\underline{y})_M^{\lim} \subseteq (\underline{x})_M^{\lim}$.
\end{remark}

The following is a slight generalization of \cite[Proposition
2]{Ho}, and \cite[Theorem 3.3]{CHL}.
\begin{proposition}
Let $M$ be a finitely generated $R$-modules of dimension $d$. Then
there exists a positive integer $n$ such that every system of
parameters $\underline{y} = y_1, \ldots, y_d$ contained in $\frak{m}^n$
satisfies the  monomial property  i.e.
$(\underline{y})_M^{\lim} \neq M$.
\end{proposition}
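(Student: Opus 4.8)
The plan is to reduce the monomial property for an arbitrary deep system of parameters to a single fixed one, using the determinantal maps together with the injectivity criterion of Remark~\ref{R2.3}. The decisive point is the order of the quantifiers: I will first fix one system of parameters and one power of it, and only \emph{afterwards} choose $n$. In this way the same $n$ works simultaneously for every $\underline{y}$ contained in $\fm^n$, which is precisely the uniformity the statement demands.

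First I would fix an arbitrary system of parameters $\underline{x} = x_1, \ldots, x_d$ of $M$. By Grothendieck's non-vanishing theorem $H^d_{\fm}(M) \neq 0$, and since $\sqrt{(\underline{x})} = \fm$ we have $H^d_{(\underline{x})}(M) = H^d_{\fm}(M)$. By Remark~\ref{R2.2}(ii) this module is the direct limit $\varinjlim_n M/(\underline{x}^{[n]})_M^{\lim}$ taken along \emph{injective} transition maps. A direct limit of an injective system whose limit is nonzero cannot have all of its terms zero, so there is an integer $N$ with $M/(\underline{x}^{[N]})_M^{\lim} \neq 0$; I fix such an $N$.

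Next, since $\underline{x}$ is a system of parameters, the ideal $(\underline{x}^{[N]}) = (x_1^N, \ldots, x_d^N)$ is $\fm$-primary, so there is an integer $n$ with $\fm^n \subseteq (\underline{x}^{[N]})$. I claim this $n$ works. Let $\underline{y} = y_1, \ldots, y_d$ be any system of parameters with $y_i \in \fm^n$ for all $i$. Then each $y_i$ lies in $(\underline{x}^{[N]})$, so $(\underline{y}) \subseteq (\underline{x}^{[N]})$ and we may write $(\underline{y}) \overset{A}{\subseteq} (\underline{x}^{[N]})$ for a suitable matrix $A$. Moreover both ideals are $\fm$-primary, so $\sqrt{(\underline{y})} = \fm = \sqrt{(\underline{x}^{[N]})}$.

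Now I apply Remark~\ref{R2.3}, with $\underline{x}^{[N]}$ playing the role of $\underline{x}$: the equality of radicals forces the determinantal map
$$\det(A) \,:\, M/(\underline{x}^{[N]})_M^{\lim} \longrightarrow M/(\underline{y})_M^{\lim}$$
to be injective. Since the source is nonzero by the choice of $N$, the target is nonzero as well, that is $(\underline{y})_M^{\lim} \neq M$, which is exactly the monomial property. The step I expect to require the most care is the very first one, namely realizing $H^d_{\fm}(M)$ as the direct limit of the $M/(\underline{x}^{[n]})_M^{\lim}$ along injective maps, so that non-vanishing of the limit transfers to non-vanishing of the finite stage $M/(\underline{x}^{[N]})_M^{\lim}$; once $N$ is fixed before $n$, the uniformity of $n$ over all admissible $\underline{y}$ is immediate.
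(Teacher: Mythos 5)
Your strategy is the same as the paper's own proof: use Grothendieck non-vanishing together with the injective direct system $\{M/(\underline{x}^{[n]})_M^{\lim}\}_{n\ge 1}$ of Remark \ref{R2.2}(ii)--(iii) to produce one power $\underline{x}^{[N]}$ with the monomial property, then catch every deep $\underline{y}$ via the determinantal-map injectivity of Remark \ref{R2.3}. However, as written there is a genuine gap: you assert that $(\underline{x}^{[N]})$ and $(\underline{y})$ are $\fm$-primary ideals of $R$, and this is false unless $\Ann M$ is nilpotent. A system of parameters of $M$ consists of $d=\dim M$ elements and only satisfies $\sqrt{(\underline{y})+\Ann M}=\fm$; the ideal $(\underline{y})$ itself need not contain any power of $\fm$. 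If $\dim M<\dim R$, an ideal generated by $d$ elements can never be $\fm$-primary (e.g.\ $R=k[[a,b]]$, $M=R/(a)$, $\underline{x}=b$), and even when $\dim M=\dim R$ it can fail (e.g.\ $R=k[[a,b]]/(ab)$, $M=R/(a)$, $\underline{x}=b$, where $\sqrt{(b^N)}=(b)\neq\fm$). In such cases the integer $n$ with $\fm^n\subseteq(\underline{x}^{[N]})$ that your argument requires simply does not exist, and the hypothesis of Remark \ref{R2.3} --- which demands $(\underline{x}^{[Nk]})\subseteq(\underline{y})$ for some $k$, as ideals of $R$ --- can also fail, so the proof breaks down for general $M$.

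The missing ingredient is precisely the paper's opening reduction: assume without loss of generality that $\Ann M=0$, by replacing $R$ with $R/\Ann M$. This is harmless because the limit closure $(\underline{y})_M^{\lim}$, the property of $\underline{y}$ being a system of parameters of $M$, and membership of the $y_i$ in the $n$-th power of the maximal ideal are all unchanged under this replacement: the colon submodules defining the limit closure depend only on how the elements act on $M$, and $\fm^n$ maps onto the $n$-th power of the maximal ideal of $R/\Ann M$. After this reduction $\dim R=\dim M$, systems of parameters of $M$ do generate $\fm$-primary ideals of $R$, and your argument then goes through verbatim and coincides with the paper's proof.
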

\begin{proof}
Without any loss of generality we may assume that $\mathrm{Ann}\, M
= 0$. Then by Remark \ref{R2.2} (iii) we  can choose a system of parameters $\underline{x} = x_1, \ldots,
x_d$ satisfies the  monomial property. The positive integer $n$ such
that $\frak{m}^n \subseteq (\underline{x} )$ satisfies our
requirement by Remark \ref{R2.3}.
\end{proof}

\begin{lemma}\label{L2.5}
Let $M$ be a finitely generated $R$-modules of dimension $d$, and
$\underline{x} = x_1, \ldots, x_r$ a sequence of elements in $R$. Then
the following assertions hold true.
\begin{enumerate}[{(i)}]\rm
\item {\it $(\underline{x})_M^{\lim} = M$ if $H^r_{(\underline{x})}(M) =
0$. In particular, if $r>d$ then $ (\underline{x})_M^{\lim} = M$.}
\item {\it If $r = d$ then $M/(\underline{x})_M^{\lim}$ has finite
length.}
\end{enumerate}
\end{lemma}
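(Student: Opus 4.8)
The key tool is Remark 2.2(ii), which identifies $(\underline{x})_M^{\lim}/(\underline{x})M$ as the kernel of the canonical map $M/(\underline{x})M \to H^r_{(\underline{x})}(M)$, and more generally realizes each $M/(\underline{x}^{[n]})_M^{\lim}$ as a submodule of $H^r_{(\underline{x})}(M)$ with $\varinjlim_n M/(\underline{x}^{[n]})_M^{\lim} \cong H^r_{(\underline{x})}(M)$.

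For part (i), I would argue as follows. Suppose $H^r_{(\underline{x})}(M) = 0$. Since by Remark 2.2(ii) the module $M/(\underline{x})_M^{\lim}$ embeds into $H^r_{(\underline{x})}(M)$, the vanishing of the target forces $M/(\underline{x})_M^{\lim} = 0$, i.e. $(\underline{x})_M^{\lim} = M$. For the particular case $r > d$, recall the standard vanishing theorem for local cohomology with respect to an ideal generated by $r$ elements: the cohomological dimension $\operatorname{cd}((\underline{x}), M)$ is bounded above by the arithmetic rank of $(\underline{x})$, so $H^i_{(\underline{x})}(M) = 0$ for all $i > r$; but more to the point here, the relevant bound is $\operatorname{cd}((\underline{x}), M) \le \dim M = d$ by Grothendieck's vanishing theorem (local cohomology vanishes above the dimension of the support, which is at most $\dim M = d$). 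Hence when $r > d$ we have $r > d \ge \operatorname{cd}((\underline{x}), M)$, so $H^r_{(\underline{x})}(M) = 0$, and the first assertion applies.

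For part (ii), assume $r = d$. I would again use the identification of $M/(\underline{x})_M^{\lim}$ as a submodule of $H^d_{(\underline{x})}(M)$. The plan is to show this top local cohomology module is Artinian, from which finite length of the finitely generated submodule $M/(\underline{x})_M^{\lim}$ follows. Since $\underline{x} = x_1, \ldots, x_d$ is a system of parameters of $M$, we have $\sqrt{(\underline{x}) + \operatorname{Ann} M} = \sqrt{\frak{m}^{\dim M} + \operatorname{Ann}M}$, so that $(\underline{x})$ is $\frak{m}$-primary modulo $\operatorname{Ann} M$; consequently $H^d_{(\underline{x})}(M) \cong H^d_{\frak{m}}(M)$, which is Artinian by the standard theory of local cohomology over a local ring. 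A finitely generated module that embeds in an Artinian module has finite length, so $\ell(M/(\underline{x})_M^{\lim}) < \infty$.

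The main obstacle—or rather the one point requiring care—will be justifying the precise cohomological vanishing statements and the isomorphism $H^d_{(\underline{x})}(M) \cong H^d_{\frak{m}}(M)$ when $M$ has a nontrivial annihilator. The cleanest route is to reduce to the faithful case by passing to $R/\operatorname{Ann} M$, exactly as in the proof of the preceding Proposition: since local cohomology depends only on the radical of the support ideal and $\sqrt{(\underline{x}) + \operatorname{Ann} M}$ equals the maximal ideal of $R/\operatorname{Ann} M$, the comparison of supports gives both the vanishing for $r > d$ and the identification needed for finite length. I expect the verification to be essentially formal once the right vanishing theorems are invoked.
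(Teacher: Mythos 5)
Your part (i) is correct and coincides with the paper's argument: by Remark \ref{R2.2}(ii) the module $M/(\underline{x})_M^{\lim}$ embeds into $H^r_{(\underline{x})}(M)$, and Grothendieck's vanishing theorem gives $H^r_{(\underline{x})}(M)=0$ whenever $r>d=\dim M$, so the statement follows at once.

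Part (ii), however, has a genuine gap. The lemma is stated for an \emph{arbitrary} sequence $\underline{x}=x_1,\ldots,x_d$ of $d$ elements, but your argument assumes that $\underline{x}$ is a system of parameters of $M$: the isomorphism $H^d_{(\underline{x})}(M)\cong H^d_{\fm}(M)$ you invoke holds precisely when $(\underline{x})+\Ann M$ is $\fm$-primary, i.e.\ exactly under the system-of-parameters hypothesis, and it fails otherwise; passing to $R/\Ann M$ does not repair this, since a sequence of $d$ elements need not generate an ideal primary to the maximal ideal there either. (Note also that in Section 2 the ring $R$ is merely Noetherian, not local, so ``system of parameters'' is not even available as a hypothesis.) The paper instead quotes the stronger general fact \cite[Exercise 7.1.7]{BS98}: for \emph{every} ideal $\fa$ of a Noetherian ring and every finitely generated module $M$ of finite dimension $d$, the top local cohomology $H^d_{\fa}(M)$ is Artinian. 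Granting this, $M/(\underline{x})_M^{\lim}$ is a finitely generated module embedded in an Artinian one, hence both Noetherian and Artinian, hence of finite length. The extra generality is not cosmetic: in Example \ref{E5.5} the lemma is applied to the sequence $y^n,u^n,v^n$, which generates (up to radical) a height-two prime $\fp$ of a three-dimensional domain and is therefore \emph{not} a system of parameters, to conclude that $(y^n,u^n,v^n)^{\lim}$ is $\fm$-primary. Your proof does not cover that application; replacing your identification with $H^d_{\fm}(M)$ by the cited Artinianness theorem for arbitrary support ideals closes the gap.
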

\begin{proof} Note first that  $M/(\underline{x})_M^{\lim}$ is a
submodule of $H^r_{(\underline{x})}(M)$ by Remark \ref{R2.2} (ii). So the statement
(i) is clear. \\
(ii) If  $r=d$,  $H^d_{(\underline{x})}(M)$ is an
Artinian module (cf. \cite[Excercise 7.1.7]{BS98}), and hence
$M/(\underline{x})_M^{\lim}$ is both Noetherian and Artinian. Thus $\ell(M/(\underline{x})_M^{\lim}) < \infty$.
\end{proof}

\begin{corollary}\label{C2.6}
Let $\underline{x} = x_1, \ldots, x_r$ be a sequence of elements, and $N$ a submodule of $M$ such that $\dim N < r$. Then $N
\subseteq (\underline{x})_M^{\lim}$.
\end{corollary}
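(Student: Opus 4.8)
The plan is to deduce the containment directly from Lemma \ref{L2.5}(i) applied to $N$ itself, combined with the evident monotonicity of the limit closure under inclusion of modules. The hypothesis $\dim N < r$ is precisely what forces the limit closure of $\underline{x}$ computed \emph{inside} $N$ to fill up all of $N$, and monotonicity then transports this into $M$.

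First I would record the monotonicity. If $N \subseteq M$ is a submodule, then for every $n \ge 1$ we have $(\underline{x}^{[n+1]})N \subseteq (\underline{x}^{[n+1]})M$, and hence, since the colon module $(\underline{x}^{[n+1]})N :_N (x_1 \cdots x_r)^n$ is by definition a submodule of $N$,
$$(\underline{x}^{[n+1]})N :_N (x_1 \cdots x_r)^n \subseteq (\underline{x}^{[n+1]})M :_M (x_1 \cdots x_r)^n.$$
Taking the union over $n$ yields $(\underline{x})_N^{\lim} \subseteq (\underline{x})_M^{\lim}$. This step is purely formal from the definition of limit closure and uses no hypothesis on $\dim N$; it simply says that an element of $N$ killed into $(\underline{x}^{[n+1]})N$ by a power of $x_1 \cdots x_r$ is a fortiori killed into $(\underline{x}^{[n+1]})M$.

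Next I would invoke Lemma \ref{L2.5}(i) with $N$ in place of $M$. That lemma is stated for an arbitrary finitely generated module, so it applies verbatim to $N$: since $r > \dim N$, its ``in particular'' clause (equivalently, Grothendieck vanishing $H^r_{(\underline{x})}(N) = 0$, which holds because $r$ exceeds $\dim N$) gives $(\underline{x})_N^{\lim} = N$. Combining with the monotonicity of the previous paragraph produces
$$N = (\underline{x})_N^{\lim} \subseteq (\underline{x})_M^{\lim},$$
which is exactly the assertion. I expect no genuine obstacle: the only points to verify are that Lemma \ref{L2.5} may be read with $N$ as the ambient module (which it can, being stated for any finitely generated module) and that the colon modules defining $(\underline{x})_N^{\lim}$ live inside $N$ so that the inclusion into the corresponding colon modules for $M$ is meaningful. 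Both are immediate from the definitions, so the proof is essentially a one-line consequence of Lemma \ref{L2.5}(i) and the functoriality of the construction.
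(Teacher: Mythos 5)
Your proof is correct and follows essentially the same route as the paper: both apply Lemma \ref{L2.5}(i) to $N$ (using $r > \dim N$) to get $(\underline{x})_N^{\lim} = N$, and then use the containment of each colon submodule $(\underline{x}^{[n+1]})N :_N (x_1\cdots x_r)^n$ in $(\underline{x}^{[n+1]})M :_M (x_1\cdots x_r)^n$ to conclude $N \subseteq (\underline{x})_M^{\lim}$. The only difference is presentational: you isolate the monotonicity as a separate step, while the paper writes it as a single chain of inclusions.
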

\begin{proof} By Lemma \ref{L2.5} we have $(\underline{x})_N^{\lim} =
N$. Hence
\begin{eqnarray*}
(\underline{x})_M^{\lim} &=& \bigcup_{n>0}\big((\underline{x}^{[n+1]})M:_M x_1^n \ldots x_r^n \big)\\
& \supseteq & \bigcup_{n>0}\big((\underline{x}^{[n+1]})N:_N
x_1^n \ldots x_r^n \big)\\
& = & (\underline{x})_N^{\lim} = N.
\end{eqnarray*}
\end{proof}

The following is very useful in the sequel.
\begin{proposition}\label{P2.7}
Let $\underline{x} = x_1, \ldots, x_r$ be a sequence of elements, and $N$ a submodule of $M$ such that $N \subseteq
\cap_{n>0}(\underline{x}^{[n]})_M^{\lim}$. Set $\overline{M} = M/N$.
Then we have
$(\underline{x})_{\overline{M}}^{\lim}=(\underline{x})_M^{\lim}/N$.
\end{proposition}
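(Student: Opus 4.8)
The plan is to prove the asserted identity as an equality of two submodules of $\overline{M}$, treating the two inclusions separately. First I would record the preliminary reduction: since $N \subseteq \bigcap_{n>0}(\underline{x}^{[n]})_M^{\lim}$, taking the $n=1$ term gives $N \subseteq (\underline{x})_M^{\lim}$, so $(\underline{x})_M^{\lim}/N$ is a genuine submodule of $\overline{M}=M/N$ and the statement makes sense as written. Throughout I abbreviate $y = x_1\cdots x_r$. The inclusion $(\underline{x})_M^{\lim}/N \subseteq (\underline{x})_{\overline{M}}^{\lim}$ needs no hypothesis on $N$ and is immediate from the fact that the quotient map carries limit closure into limit closure: if $m \in (\underline{x})_M^{\lim}$, I pick $n$ with $y^n m \in (\underline{x}^{[n+1]})M$, and reducing modulo $N$ yields $y^n \overline{m} \in (\underline{x}^{[n+1]})\overline{M}$, so $\overline{m} \in (\underline{x})_{\overline{M}}^{\lim}$.

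For the reverse inclusion I would start with $\overline{m} \in (\underline{x})_{\overline{M}}^{\lim}$, choose $n>0$ with $y^n \overline{m} \in (\underline{x}^{[n+1]})\overline{M}$, and lift this to $M$ to write $y^n m = u + v$ with $u \in (\underline{x}^{[n+1]})M$ and $v \in N$. The crucial move is to feed $v$ into the hypothesis \emph{at the right exponent}: because $N \subseteq (\underline{x}^{[n+1]})_M^{\lim}$, and because the limit closure of the sequence $\underline{x}^{[n+1]}$ has product of entries $y^{n+1}$ and $(s+1)$-th powers $\underline{x}^{[(n+1)(s+1)]}$, unwinding the definition produces an $s>0$ with $y^{(n+1)s}v \in (\underline{x}^{[(n+1)(s+1)]})M$. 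Multiplying the relation $y^n m = u+v$ by $y^{(n+1)s}$ and checking that $y^{(n+1)s}u$ also lands in $(\underline{x}^{[(n+1)(s+1)]})M$ (each generator $x_i^{n+1}$ of $u$ picks up the extra factor $x_i^{(n+1)s}$, giving $x_i^{(n+1)(s+1)}$), I obtain $y^{\,n+(n+1)s}m \in (\underline{x}^{[(n+1)(s+1)]})M$.

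What makes everything fit is the arithmetic identity $(n+1)(s+1) = \big(n+(n+1)s\big)+1$: setting $k = n+(n+1)s$, this says precisely $y^k m \in (\underline{x}^{[k+1]})M$, whence $m \in (\underline{x})_M^{\lim}$ and therefore $\overline{m} \in (\underline{x})_M^{\lim}/N$. I expect the only real obstacle to be exactly this index bookkeeping—both correctly reading $(\underline{x}^{[n+1]})_M^{\lim}$ as an iterated limit closure and verifying the exponent identity—while the hypothesis enters essentially in that one needs $v$ to lie in the limit closure of $\underline{x}^{[n+1]}$, not merely of $\underline{x}$, in order to match the powers and push the containment up to level $k+1$.
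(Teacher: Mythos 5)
Your proof is correct and follows essentially the same route as the paper's: both arguments hinge on invoking the hypothesis at the shifted level, i.e.\ $N \subseteq (\underline{x}^{[n+1]})_M^{\lim}$, to get $y^{(n+1)s}v \in (\underline{x}^{[(n+1)(s+1)]})M$, and then closing the loop with the exponent identity $(n+1)(s+1) = \bigl(n+(n+1)s\bigr)+1$, which is exactly the identity $(k+1)(s+1)=k(s+1)+s+1$ underlying the paper's chain of colon-module inclusions. The only difference is cosmetic: you argue elementwise, while the paper phrases the same computation at the level of submodules, using Noetherian stabilization of the union $\bigcup_{m>0}\bigl(((\underline{x}^{[m+1]})M+N):_M (x_1\cdots x_r)^m\bigr)$ so the hypothesis need only be applied at one index.
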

\begin{proof}
It is sufficient to prove that
$$(\underline{x})_M^{\lim}=
\bigcup_{m>0}\big(((\underline{x}^{[m+1]})M+N):_M x_1^m\ldots x_r^m
\big).$$ The left hand side is clearly contained in the right hand side.  For the converse, by the Noetherianess there exists a positive integer $s$ such
that
$$\bigcup_{m>0}\big(((\underline{x}^{[m+1]})M+N):_M x_1^m \ldots x_r^m
\big) = ((\underline{x}^{[s+1]})M+N):_M x_1^s \ldots x_r^s.$$
On the other hand $ N \subseteq (\underline{x}^{[s+1]})_M^{\lim}$, so there exists a positive
integer $k$ such that
$$N \subseteq (x_1^{(k+1)(s+1)}, \ldots,
x_r^{(k+1)(s+1)})M:_M x_1^{k(s+1)} \ldots x_r^{k(s+1)}.$$ Therefore
$$(x_1^{s+1}, \ldots,x_r^{s+1})M+N \subseteq (x_1^{(k+1)(s+1)}, \ldots,
x_r^{(k+1)(s+1)})M:_M x_1^{k(s+1)} \ldots x_r^{k(s+1)}.$$
Thus
\begin{eqnarray*}
(\underline{x})_M^{\lim} &\supseteq&(x_1^{(k+1)(s+1)}, \ldots,
x_r^{(k+1)(s+1)})M:_M
x_1^{k(s+1)+s} \ldots x_r^{k(s+1)+s}\\
&=& \big((x_1^{(k+1)(s+1)}, \ldots, x_r^{(k+1)(s+1)})M:_M
x_1^{k(s+1)} \ldots x_r^{k(s+1)}\big):_M x_1^s \ldots x_r^s\\
&\supseteq& ((\underline{x}^{[s+1]})M+N) :_M x_1^s \ldots x_r^s \\
&=& \bigcup_{m>0}\big(((\underline{x}^{[m+1]})M+N):_M x_1^m  \ldots x_r^m
\big).
\end{eqnarray*}
The proof is complete.
\end{proof}

\begin{proposition} \label{P3.3}
Let $M$ be a finitely generated $R$-module of dimension $d$ and  $\underline{x} = x_1, \ldots, x_d$ a sequence of elements of $R$. Then the following conditions are equivalent
\begin{enumerate}[{(i)}]\rm
 \item $H^d_{(\underline{x})}(M) = 0$.
\item $\cap_{n\geq 1}(\underline{x}^{[n]})_M^{\lim} = M$.
\item $\dim R/\mathrm{Ann}\,H^d_{(\underline{x})}(M) < d$.
\end{enumerate}
\end{proposition}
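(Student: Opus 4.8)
The plan is to prove the two easy equivalences $(i)\Leftrightarrow(ii)$ and $(i)\Rightarrow(iii)$ directly from the material of Section 2, and then concentrate all the work on the converse $(iii)\Rightarrow(i)$.

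For $(i)\Rightarrow(ii)$ I would use that $\sqrt{(\underline{x}^{[n]})}=\sqrt{(\underline{x})}$, so local cohomology is unchanged: $H^d_{(\underline{x}^{[n]})}(M)=H^d_{(\underline{x})}(M)=0$ for every $n$. Applying Lemma \ref{L2.5}(i) to each sequence $\underline{x}^{[n]}$ gives $(\underline{x}^{[n]})^{\lim}_M=M$, whence the intersection is $M$. Conversely, $(ii)$ forces $(\underline{x}^{[n]})^{\lim}_M=M$ for every $n$, so $M/(\underline{x}^{[n]})^{\lim}_M=0$, and the isomorphism $\varinjlim M/(\underline{x}^{[n]})^{\lim}_M\cong H^d_{(\underline{x})}(M)$ of Remark \ref{R2.2}(ii) yields $H^d_{(\underline{x})}(M)=0$. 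The implication $(i)\Rightarrow(iii)$ is immediate, since $H^d_{(\underline{x})}(M)=0$ gives $\Ann H^d_{(\underline{x})}(M)=R$ and $\dim R/R=-\infty<d$.

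For $(iii)\Rightarrow(i)$ I would prove the contrapositive in the sharp form: if $H:=H^d_{(\underline{x})}(M)\neq 0$ then $\dim R/\Ann H=d$, so $(iii)$ fails. The inequality $\dim R/\Ann H\le d$ is free, since functoriality of local cohomology gives $\Ann M\subseteq\Ann H$, hence $V(\Ann H)\subseteq\Supp M$. For the reverse inequality I would localize: choose $\fq\in\Supp H$, so that $H_\fq=H^d_{(\underline{x})R_\fq}(M_\fq)\neq 0$; by Grothendieck vanishing ($H^i_I=0$ above the dimension) this forces $\dim M_\fq=d$. Granting the local statement below, I pick a prime $\fp R_\fq\supseteq\Ann_{R_\fq}H_\fq$ with $\dim R_\fq/\fp R_\fq=d$ and contract it to $R$: since $(\Ann H)R_\fq\subseteq\Ann_{R_\fq}H_\fq\subseteq\fp R_\fq$, the contraction $\fp$ satisfies $\Ann H\subseteq\fp$ and $d=\Ht(\fq/\fp)\le\dim R/\fp\le d$, giving $\dim R/\Ann H\ge d$ as required.

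It remains to prove the local statement, which is where I expect the real difficulty to lie: over a local ring $(R_\fq,\fq R_\fq)$ with $\dim M_\fq=d$, nonvanishing of the top local cohomology forces $\dim R_\fq/\Ann_{R_\fq}H_\fq=d$. Over the completion $\widehat{R_\fq}$ the module $H_\fq$ is Artinian (Lemma \ref{L2.5} and \cite[Exercise 7.1.7]{BS98}), so Matlis duality applies and preserves annihilators; thus $\dim R_\fq/\Ann_{R_\fq}H_\fq$ equals the dimension of the finitely generated Matlis dual $H_\fq^{\vee}$, whose associated primes are exactly the attached primes $\Att H^d_{(\underline{x})\widehat{R_\fq}}(\widehat{M_\fq})$. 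The key input is the structural fact that every attached prime of such a top local cohomology module lies in $\mathrm{Assh}(\widehat{M_\fq})$, hence has coheight $d$; I would establish this by taking a prime filtration of $\widehat{M_\fq}$, using that $H^d_{(\underline{x})}(-)$ is right exact (its cohomological dimension is at most $d$), and invoking the Lichtenbaum--Hartshorne theorem on the $d$-dimensional complete domain quotients $\widehat{R_\fq}/\fp$ to identify $\sqrt{\Ann H^d_{(\underline{x})}(\widehat{R_\fq}/\fp)}=\fp$. Since $H_\fq\neq 0$ forces $\Att H_\fq\neq\varnothing$ and every such prime has coheight $d$, we conclude $\dim H_\fq^{\vee}=d$. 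This identification of attached primes with primes of coheight $d$ is the main obstacle; everything else in the argument is formal.
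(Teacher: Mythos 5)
Your treatment of $(i)\Leftrightarrow(ii)$ and $(i)\Rightarrow(iii)$ is correct, and the localization/completion bookkeeping in $(iii)\Rightarrow(i)$ works (the inequality $\dim R_\fq/\Ann_{R_\fq}H_\fq \ge \dim \widehat{R_\fq}/\Ann_{\widehat{R_\fq}}H_\fq$ goes in the direction you need, and $\le d$ is free from $\Ann M\subseteq\Ann H$). The gap is in the final step, exactly where you locate the difficulty. Your prime filtration plus right exactness yields the containment $\Att(H_\fq)\subseteq\bigcup_i\Att\bigl(H^d_{(\underline{x})}(\widehat{R_\fq}/\fp_i)\bigr)$, so to conclude that every attached prime of $H_\fq$ has coheight $d$ you must control \emph{all} attached primes of each cyclic piece. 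But what you propose to prove about the pieces, namely $\sqrt{\Ann\,H^d_{(\underline{x})}(\widehat{R_\fq}/\fp)}=\fp$, only pins down the \emph{minimal} attached primes (the minimal primes over the annihilator). Attached primes are not determined by the annihilator: for instance $E(R/\fm)\oplus R/\fm$ over a complete local domain has annihilator $(0)$ but attached primes $\{(0),\fm\}$. So an embedded attached prime of small coheight in one of the pieces could a priori survive into $\Att(H_\fq)$, and the argument as written does not close.

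The fix is short, and cheaper than what you invoke. For a piece $D=\widehat{R_\fq}/\fp$ with $\dim D=d$ and any $0\neq r\in D$, Grothendieck vanishing gives $H^d_{(\underline{x})}(D/rD)=0$ since $\dim D/rD<d$, so right exactness makes multiplication by $r$ surjective on $H^d_{(\underline{x})}(D)$; since for an Artinian module $T$ one has $rT=T$ exactly when $r$ avoids every attached prime, this yields $\Att_D\bigl(H^d_{(\underline{x})}(D)\bigr)\subseteq\{(0)\}$ --- all attached primes, and with no appeal to Lichtenbaum--Hartshorne. (Alternatively: when the piece is nonzero, LHS forces $(\underline{x})D$ to be $\fm_D$-primary, so the piece equals $H^d_{\fm_D}(D)$ and Macdonald--Sharp gives $\Att=\mathrm{Assh}\,D=\{(0)\}$.) With that repaired your proof is valid, but note how much heavier it is than the paper's: the paper proves $(iii)\Rightarrow(ii)$ directly and elementarily, using Remark \ref{R2.2}(ii) and Proposition \ref{P2.7} to identify $\Ann H^d_{(\underline{x})}(M)=\Ann M'$ where $M'=M/\cap_{n\ge 1}(\underline{x}^{[n]})_M^{\lim}$; then $(iii)$ says $\dim M'<d$, Lemma \ref{L2.5}(i) gives $(\underline{x}^{[n]})_{M'}^{\lim}=M'$ for all $n$, while Proposition \ref{P2.7} makes the intersection of these limit closures zero, forcing $M'=0$. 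No completion, Matlis duality, attached primes, or Lichtenbaum--Hartshorne enter at all.
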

\begin{proof} (i) $\Leftrightarrow$ (ii) follows from  Remark \ref{R2.2} (ii).\\
(i) $\Rightarrow$ (iii) is trivial.\\
(iii) $\Rightarrow$ (ii) Set $M' = M/\cap_{n\geq
1}(\underline{x}^{[n]})_M^{\lim}$. By Proposition \ref{P2.7} we have $\cap_{n\geq
1}(\underline{x}^{[n]})_{M'}^{\lim} = 0$. By Remark \ref{R2.2} (ii) again we have
$$\mathrm{Ann}\,H^d_{(\underline{x})}(M) = \cap_{n \ge 1} \mathrm{Ann} \big(M/(\underline{x}^{[n]})_M^{\lim} \big) = \mathrm{Ann} M'.$$
Thus $\dim R/\mathrm{Ann}\,H^d_{(\underline{x})}(M) = \dim M'$. So we have $\dim M' <
d$. It follows by Lemma \ref{L2.5} that $\cap_{n\geq
1}(\underline{x'}^{[n]})_{M'}^{\lim} = M'$. Therefore $M'=0$ and hence $\cap_{n\geq 1}(\underline{x}^{[n]})_M^{\lim} = M$.
\end{proof}

\section{Intersection of limit closures}
The aim of this section is to prove Theorem \ref{thmA}. Let $(R, \fm)$ be a Noetherian local ring and $M$ a finitely generated $R$-module of dimension $d$. Recall that the {\it unmixed component} $U_M(0)$ of $M$  is a submodule defined by
$$U_M(0) =
\bigcap_{\frak p \in \mathrm{Assh}M}N(\frak p),$$ where $0 = \cap_{\frak p \in \mathrm{Ass}\,M}N(\frak
p)$ is a reduced primary decomposition of the zero submodule of $M$ (see \cite{CN}).
We need some auxiliary results from \cite{C,CM} to prove Theorem \ref{thmA}. Let
$\underline{x} = x_1, . . . ,x_d$ be a system of parameters of $M$. Then we can consider
 the differences $$I_{M,\underline{x}}(n) =
\ell(M/(\underline{x}^{[n]})M) - e(\underline{x}^{[n]};M),\, \mathrm { and }$$
$$J_{M,\underline{x}}(n) = e(\underline{x}^{[n]};M) -
\ell(M/(\underline{x}^{[n]})_M^{\lim})\ \ \ \ \ \ $$
as functions in $n$, where $e(\underline{x};M)$ is the Serre multiplicity of $M$ with respect
to the sequence $\underline{x}$. In general, these functions are not polynomials in $n$ (see \cite{CMN}),
but they are bounded above by polynomials. Moreover,  we  have
\begin{theorem}[see \cite{C,CM, MQ17}]\label{C} With the notations as above,
the both functions $I_{M,\underline{x}}(n)$ and
$J_{M,\underline{x}}(n)$ are non-negative increasing, and the least
degrees of polynomials in $n$ bounding above these functions are
independent of the choice of $\underline{x}$. Moreover, if we denote
by $p(M)$ and $pf(M)$ for these least degrees with respect to
$I_{M,\underline{x}}(n)$ and $J_{M,\underline{x}}(n)$ respectively,
then $p(M)\leq d-1$ and $pf(M)\leq d-2$.
\end{theorem}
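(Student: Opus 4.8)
The plan is to treat the three assertions—non-negativity, monotonicity in $n$, and the existence of $\underline{x}$-independent least bounding degrees $p(M)$, $pf(M)$ with $p(M)\le d-1$ and $pf(M)\le d-2$—separately, relying throughout on the identification of $(\underline{x})^{\lim}_M/\underline{x}M$ with the kernel of the canonical map $M/\underline{x}M\to H^d_{\fm}(M)$ from Remark \ref{R2.2}(ii) (valid since $\sqrt{(\underline{x})}=\fm$, so that $H^d_{(\underline{x})}=H^d_{\fm}$). Non-negativity of $I_{M,\underline{x}}(n)$ is the classical inequality $e(\underline{x}^{[n]};M)\le\ell(M/\underline{x}^{[n]}M)$ for a parameter ideal, which I would recall from Serre's formula $e(\underline{x}^{[n]};M)=\sum_{i\ge 0}(-1)^i\ell(H_i(\underline{x}^{[n]};M))$, whose bottom term is $M/\underline{x}^{[n]}M$. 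Monotonicity (the increasing property in $n$) I would derive from the determinantal maps $x_1\cdots x_d:M/(\underline{x}^{[n]})^{\lim}_M\hookrightarrow M/(\underline{x}^{[n+1]})^{\lim}_M$ of Remark \ref{R2.2}(ii) together with the exact scaling $e(\underline{x}^{[n]};M)=n^d\,e(\underline{x};M)$, which reduces both statements to a length comparison between consecutive parameter ideals.

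The heart is the $\underline{x}$-independence of the least degrees together with the bounds. For $I$ the strategy is to establish the intrinsic formula $p(M)=\max\{\dim\Supp H^i_{\fm}(M):0\le i\le d-1\}$; independence is then automatic, and Grothendieck's bound $\dim\Supp H^i_{\fm}(M)\le i$ forces $p(M)\le d-1$. I would prove this formula by induction on $d$: choosing $x_1$ filter-regular and using the short exact sequences $0\to(0:_Mx_1^n)\to M\xrightarrow{x_1^n}M\to M/x_1^nM\to 0$ together with additivity of multiplicity to compare $I_{M,\underline{x}}(n)$ with the dimension-$(d-1)$ functions of $M/x_1M$ and $(0:_Mx_1)$, tracking that passing to a filter-regular hyperplane section drops each $\dim\Supp H^i_{\fm}$ by at most one.

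For $J$ the content is both $J\ge 0$, i.e. $\ell(M/(\underline{x}^{[n]})^{\lim}_M)\le e(\underline{x}^{[n]};M)$, and the sharper bound $pf(M)\le d-2$; the conceptual source of the extra degree is that the limit-closure defect ignores the lowest-dimensional part of $M$. Indeed, since $\dim U_M(0)<d$, Corollary \ref{C2.6} gives $U_M(0)\subseteq\bigcap_{n}(\underline{x}^{[n]})^{\lim}_M$, so by Proposition \ref{P2.7} the limit closure descends to $M/U_M(0)$ and $\ell(M/(\underline{x}^{[n]})^{\lim}_M)=\ell\big((M/U_M(0))/(\underline{x}^{[n]})^{\lim}_{M/U_M(0)}\big)$; as also $e(\underline{x}^{[n]};M)=e(\underline{x}^{[n]};M/U_M(0))$, we obtain $J_{M,\underline{x}}(n)=J_{M/U_M(0),\underline{x}}(n)$ and may assume $M$ unmixed. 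For the unmixed module $M'=M/U_M(0)$ every associated prime has dimension $d$, so $H^0_{\fm}(M')=0$, and the support estimate $\Supp H^i_{\fm}(M')\subseteq\{\fp:\depth M'_{\fp}+\dim R/\fp\le i\}$ forces $\dim\Supp H^i_{\fm}(M')\le i-1$ for all $i<d$; this uniform gain of one in every support dimension is precisely what turns $d-1$ into $d-2$. The step I expect to be the main obstacle is the cohomological control of the limit-closure defect $\ell\big((\underline{x}^{[n]})^{\lim}_M/\underline{x}^{[n]}M\big)=\ell\big(\ker(M/\underline{x}^{[n]}M\to H^d_{\fm}(M))\big)$: one must bound its growth by the lower local cohomology modules $H^i_{\fm}(M)$, $i<d$ (yielding both $J\ge 0$ and the degree estimate), and do so uniformly over all systems of parameters—this is exactly where the comparison between the Koszul edge map and the modules $H^i_{\fm}(M)$ must be made effective independently of $\underline{x}$.
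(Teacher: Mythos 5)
First, a point of comparison: the paper itself contains no proof of Theorem \ref{C} — it is quoted from \cite{C} and \cite{CM} — so your proposal is in effect an attempt to reconstruct the main results of those two papers, and it must be judged on its own completeness.

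There is one outright error and one admitted gap, and the gap sits exactly where the theorem lives. The error: your proposed intrinsic formula $p(M)=\max\{\dim\Supp H^i_{\fm}(M):0\le i\le d-1\}$ is vacuous, because every $H^i_{\fm}(M)$ is $\fm$-torsion, so $\Supp H^i_{\fm}(M)\subseteq\{\fm\}$ and your maximum is always $\le 0$; that would make $I_{M,\underline{x}}(n)$ bounded, i.e. every module generalized Cohen--Macaulay, which is false. The invariant you actually need is $\dim R/\Ann H^i_{\fm}(M)$, and there is no ``Grothendieck bound'' $\dim R/\Ann H^i_{\fm}(M)\le i$ over an arbitrary Noetherian local ring: as the paper itself recalls in Section 7, that inequality requires $R$ to be an image of a Cohen--Macaulay local ring (one may pass to $\widehat{R}$ harmlessly, since lengths and multiplicities are unchanged, but then the identity of $p(M)$ with $\dim R/\fa_0\cdots\fa_{d-1}$, $\fa_i=\Ann H^i_{\fm}(M)$, is precisely the main theorem of \cite{C}, not a lemma you may invoke). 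The same substitution of $\Supp$ for $\Ann$ infects your $d-2$ argument for $pf(M)$.

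The admitted gap: everything that makes the statement a theorem — the inequality $J_{M,\underline{x}}(n)\ge 0$, the increasing property, and above all the independence of the least bounding degree from the choice of $\underline{x}$ — is deferred in your final sentence as ``the main obstacle.'' Note also that your monotonicity sketch points the wrong way: injectivity of the determinantal maps shows $n\mapsto\ell\big(M/(\underline{x}^{[n]})^{\lim}_M\big)$ is non-decreasing, whereas for $J$ to increase you need the upper bound $\ell\big(M/(\underline{x}^{[n+1]})^{\lim}_M\big)-\ell\big(M/(\underline{x}^{[n]})^{\lim}_M\big)\le\big((n+1)^d-n^d\big)e(\underline{x};M)$, which does not follow from injectivity. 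What is genuinely correct is the reduction $J_{M,\underline{x}}=J_{M/U_M(0),\underline{x}}$ via Corollary \ref{C2.6} and Proposition \ref{P2.7} (the same manipulation the paper uses to prove Theorem \ref{thmA}) and the heuristic that unmixedness accounts for the drop from $d-1$ to $d-2$. But the uniform, $\underline{x}$-independent control of the defects by lower local cohomology is the actual content of \cite{C} and \cite{CM} (the latter via Sharp--Zakeri modules of generalized fractions), so the proposal as it stands is not a proof.
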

Now we are able to prove Theorem \ref{thmA}.
\begin{proof}[Proof of Theorem \ref{thmA}.] We set $N =
\cap_{n>0}(\underline{x}^{[n]})_M^{\lim}$. Recalling that $\dim U_M(0) < d$, we
have $U_M(0) \subseteq N$ by Corollary \ref{C2.6}. Put $\overline{M} = M/U_M(0)$ and $M' =
M/N$. By Proposition \ref{P2.7} we have
$$\ell(M/(\underline{x}^{[n]})_M^{\lim}) = \ell(M'/(\underline{x}^{[n]})_{M'}^{\lim}) = \ell(\overline{M}/(\underline{x}^{[n]})_{\overline{M}}^{\lim})$$
for all $n \ge 1$. Then by Theorem \ref{C}, there are polynomials $f(n)$ of degree at most $d-1$ and $ g(n)$ of
degree at most $ d-2$ such that
$$\ell(M/(\underline{x}^{[n]})_M^{\lim}) = \ell(M'/(\underline{x}^{[n]})_{M'}^{\lim}) \leq
\ell(M'/(\underline{x}^{[n]})M') \leq n^d e(\underline{x};M') +
f(n),$$ and
$$\ell(M/(\underline{x}^{[n]})_M^{\lim}) = \ell(\overline{M}/(\underline{x}^{[n]})_{\overline{M}}^{\lim})
\geq n^d e(\underline{x};\overline{M}) - g(n).$$
Therefore
$$f(n) + g(n) \geq n^d (e(\underline{x},\overline{M}) - e(\underline{x},M'))$$ for all $n>0$.
It follows
that $e(\underline{x};N/U_M(0)) = e(\underline{x};\overline{M}) -
e(\underline{x};M') = 0$. Hence $\dim\, N <d$, and so  $N
= U_M(0)$, since $U_M(0)$ is the
largest submodule of $M$ with the dimension less than $d$. The proof is complete.
\end{proof}
The following result is an immediate consequence of Theorem \ref{thmA}.
\begin{corollary} \label{C4.3}
$\bigcap_{\underline x} (\underline{x})_M^{\lim} = U_M(0) $,
where $\underline{x}$ runs through the
set of all systems of parameters of $M$.
\end{corollary}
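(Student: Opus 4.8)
The plan is to sandwich the intersection between $U_M(0)$ on both sides, deducing one inclusion from Corollary \ref{C2.6} and the other directly from Theorem \ref{thmA}.

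First I would establish the inclusion $U_M(0) \subseteq \bigcap_{\underline{x}} (\underline{x})_M^{\lim}$. Fix an arbitrary system of parameters $\underline{x} = x_1, \ldots, x_d$ of $M$. Since $U_M(0)$ is the largest submodule of $M$ of dimension strictly less than $d$, we have $\dim U_M(0) < d$, which is precisely the length of the sequence $\underline{x}$. Corollary \ref{C2.6} then yields $U_M(0) \subseteq (\underline{x})_M^{\lim}$. As $\underline{x}$ was an arbitrary system of parameters, $U_M(0)$ lies in the intersection taken over all of them.

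For the reverse inclusion, the key observation is that the powers of a single system of parameters are themselves systems of parameters. Fix any one system of parameters $\underline{x}$. For every $n > 0$ the sequence $\underline{x}^{[n]} = x_1^n, \ldots, x_d^n$ is again a system of parameters of $M$, so each submodule $(\underline{x}^{[n]})_M^{\lim}$ occurs among the terms of the intersection running over all systems of parameters. Consequently
$$\bigcap_{\underline{y}} (\underline{y})_M^{\lim} \subseteq \bigcap_{n>0} (\underline{x}^{[n]})_M^{\lim} = U_M(0),$$
where the displayed equality is exactly Theorem \ref{thmA} applied to the fixed $\underline{x}$. Combining this with the previous paragraph gives the asserted equality.

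Since both inclusions follow at once from results already in hand, there is no genuine obstacle here: the corollary is a formal refinement of Theorem \ref{thmA}. The only point worth stressing is the refinement step itself, namely that intersecting over the entire family of systems of parameters is no larger than intersecting over the subfamily $\{\underline{x}^{[n]}\}_{n>0}$ generated by a single fixed system of parameters. It is this observation that lets Theorem \ref{thmA} supply the whole content of the statement.
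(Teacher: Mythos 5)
Your proof is correct and matches the paper's intent exactly: the paper states this corollary as an immediate consequence of Theorem \ref{thmA}, and the two inclusions you supply (Corollary \ref{C2.6} for $U_M(0) \subseteq (\underline{x})_M^{\lim}$ over every system of parameters, and Theorem \ref{thmA} applied to the subfamily $\{\underline{x}^{[n]}\}_{n>0}$ of a single fixed system of parameters for the reverse) are precisely the intended argument.
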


\begin{corollary} \label{C3.2}
Let $M$ be a finitely generated $R$-module of dimension $d$.
$$\mathrm{Ann}\,(H^d_{\mathfrak{m}}(M)) = \mathrm{Ann}\,(M/U_M(0)) = \{r \in R \mid \dim M/(0:_Mr) < d\}.$$
In particular, $H^d_{\mathfrak{m}}(M) \neq 0$.
\end{corollary}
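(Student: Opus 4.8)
The plan is to read off both equalities directly from Theorem \ref{thmA} and Remark \ref{R2.2}(ii), and then to extract the non-vanishing as a formal consequence. First I would fix any system of parameters $\underline{x} = x_1, \ldots, x_d$ of $M$. Since local cohomology depends only on the radical of the defining ideal (on the support of $M$), and $\underline{x}$ is a system of parameters, we have $H^d_{(\underline{x})}(M) = H^d_{\fm}(M)$. Now Remark \ref{R2.2}(ii), applied with $r = d$, already records that
$$\mathrm{Ann}\big(H^d_{(\underline{x})}(M)\big) = \bigcap_{n \ge 1} \mathrm{Ann}\big(M/(\underline{x}^{[n]})_M^{\lim}\big),$$
the point being that the transition maps $M/(\underline{x}^{[n]})_M^{\lim} \to H^d_{(\underline{x})}(M)$ are injective with union all of $H^d_{(\underline{x})}(M)$, so killing the colimit is the same as killing each term. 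This gives the starting identity for $\mathrm{Ann}(H^d_{\fm}(M))$.

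For the first equality I would argue by element chasing. An element $r \in R$ lies in the right-hand intersection exactly when $rM \subseteq (\underline{x}^{[n]})_M^{\lim}$ for every $n \ge 1$, i.e. when $rM \subseteq \bigcap_{n>0}(\underline{x}^{[n]})_M^{\lim}$. By Theorem \ref{thmA} this intersection is precisely $U_M(0)$, so the condition is $rM \subseteq U_M(0)$, that is $r \in \mathrm{Ann}(M/U_M(0))$. Hence
$$\mathrm{Ann}\big(H^d_{\fm}(M)\big) = \bigcap_{n \ge 1} \mathrm{Ann}\big(M/(\underline{x}^{[n]})_M^{\lim}\big) = \mathrm{Ann}\big(M/U_M(0)\big).$$

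For the second equality I would use the isomorphism $rM \cong M/(0:_M r)$, which gives $\dim M/(0:_M r) = \dim rM$. By the description recalled before Theorem \ref{thmA}, $U_M(0)$ is the largest submodule of $M$ of dimension strictly less than $d$. Therefore $rM \subseteq U_M(0)$ holds if and only if $\dim rM < d$: the forward direction is immediate since $\dim U_M(0) < d$, and the converse is the maximality of $U_M(0)$ among low-dimensional submodules. This yields $\mathrm{Ann}(M/U_M(0)) = \{r \in R : \dim M/(0:_M r) < d\}$. Finally, for the non-vanishing I would observe that $\dim U_M(0) < d = \dim M$ forces $U_M(0) \neq M$, so $M/U_M(0) \neq 0$ and its annihilator is a proper ideal; hence $\mathrm{Ann}(H^d_{\fm}(M)) \neq R$, i.e. $H^d_{\fm}(M) \neq 0$, recovering Grothendieck's non-vanishing theorem.

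The whole argument is essentially formal once Theorem \ref{thmA} is in hand, so there is no serious obstacle; the only point deserving care is the passage from the annihilator of the direct limit $H^d_{(\underline{x})}(M)$ to the intersection of the annihilators of the finite stages, which rests squarely on the injectivity of the transition maps established in Remark \ref{R2.2}(ii).
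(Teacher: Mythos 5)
Your proposal is correct and follows essentially the same route as the paper: fix a system of parameters, use Remark \ref{R2.2}(ii) to identify $\mathrm{Ann}(H^d_{\fm}(M))$ with $\bigcap_{n}\mathrm{Ann}\bigl(M/(\underline{x}^{[n]})_M^{\lim}\bigr)$, invoke Theorem \ref{thmA} to replace the intersection of limit closures by $U_M(0)$, and finish with the maximality of $U_M(0)$ together with $rM \cong M/(0:_M r)$, deducing non-vanishing from $U_M(0) \neq M$. No gaps.
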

\begin{proof}
Let $\underline{x} = x_1, \ldots, x_d$ be a system of parameter of $M$.
By Remark \ref{R2.2}, we may consider
$M/(\underline{x}^{[n]})_M^{\lim}$ as a submodule of
$H^d_{\mathfrak{m}}(M) = H^d_{(\underline{x})}(M)$ for all $n \geq
1$.
\begin{eqnarray*}
\mathrm{Ann}\,(H^d_{\mathfrak{m}}(M)) &=& \{r \in R \mid rM
\subseteq M/(\underline{x}^{[n]})_M^{\lim}, \forall \, n \geq 1\}\\
&=& \{r \in R \mid rM \subseteq U_M(0)\}\\
& =&  \mathrm{Ann}\,(M/U_M(0))\\
&=& \{r \in R \mid \dim (rM) < d\}\\
&=& \{r \in R \mid \dim M/(0:_Mr) < d\}.
\end{eqnarray*}
The last assertion follows from the first.
\end{proof}
The following was proved by Grothendieck \cite[ Proposition 6.6
(7)]{Gr}.
\begin{corollary} Let $(R,\frak m)$ be a complete local ring, and $M$
a finitely generated $R$-module of dimension $d$. Let $T^d(M) =
\mathrm{Hom}_R(H^d_{\mathfrak{m}}(M),E(R/\frak m))$, where
$E(R/\frak m)$ is the injective envelope of $R/\frak m$. Then
$\mathrm{Ann}\,(T^d(M)) = U_{R/\mathrm{Ann} M}(0)$.
\end{corollary}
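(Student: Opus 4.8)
The plan is to reduce the statement to the already-proved Corollary~\ref{C3.2} by means of Matlis duality, and then to transfer the resulting annihilator from the module $M$ to the ring $\bar R := R/\Ann M$, over which $M$ is faithful.

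First I would dispose of the duality. Since $(R,\fm)$ is complete, the functor $(-)^{\vee}=\Hom_R(-,E(R/\fm))$ is exact and faithful, and $H^d_{\fm}(M)$ is Artinian; thus $T^d(M)=\big(H^d_{\fm}(M)\big)^{\vee}$ is finitely generated, and because Matlis duality preserves annihilators (for an Artinian $A$ one has $A\cong A^{\vee\vee}$, whence $\Ann_R A=\Ann_R A^{\vee}$) we get
$$\Ann\big(T^d(M)\big)=\Ann\big(H^d_{\fm}(M)\big).$$
Corollary~\ref{C3.2} then identifies the right-hand side as $\{r\in R:\dim M/(0:_Mr)<d\}$, and using the isomorphism $M/(0:_Mr)\cong rM$ this becomes $\{r\in R:\dim rM<d\}$.

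Next I would carry out the reduction to $\bar R$. Put $\bar R=R/\Ann M$; it is a complete local ring of dimension $d$ over which $M$ is faithful. The key observation is that $\dim rM=\dim\bar r\bar R$, where $\bar r$ denotes the image of $r$. Indeed, for $\fp\in\Spec\bar R$ the localization $M_{\fp}$ stays faithful over $\bar R_{\fp}$ (since $\Ann_{\bar R}M=0$ localizes to $0$), so $\bar rM_{\fp}=0$ precisely when $\bar r=0$ in $\bar R_{\fp}$; hence $\Supp(rM)=\Supp(\bar r\bar R)$ and the two dimensions agree. Consequently $\Ann\big(H^d_{\fm}(M)\big)$ is the preimage in $R$ of $\{\bar r\in\bar R:\dim\bar r\bar R<d\}$. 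Applying Corollary~\ref{C3.2} a second time, now to the module $\bar R$ over itself, evaluates this last set as $\Ann_{\bar R}\big(\bar R/U_{\bar R}(0)\big)=U_{\bar R}(0)$, which is exactly $U_{R/\Ann M}(0)$. Chaining the three equalities gives the assertion.

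The hard part will be the support comparison in the third paragraph: one must check carefully that the condition $\dim M/(0:_Mr)<d$ depends only on the image of $r$ in $\bar R$ and coincides with the analogous condition computed intrinsically in $\bar R$. Everything else is a formal combination of Matlis duality with the two applications of Corollary~\ref{C3.2}.
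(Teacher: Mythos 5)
Your proposal is correct and follows essentially the same route as the paper: Matlis duality gives $\Ann T^d(M)=\Ann H^d_{\fm}(M)$, Corollary~\ref{C3.2} converts this to $\{r\in R : \dim rM<d\}$, and faithfulness of $M$ over $R/\Ann M$ identifies that set with $U_{R/\Ann M}(0)$. The only cosmetic difference is in the support comparison, where the paper uses the explicit injection $R\hookrightarrow M^k$ and surjection $R^k\twoheadrightarrow M$ to get $rM_{\fp}=0 \Leftrightarrow rR_{\fp}=0$, while you invoke the fact that annihilators of finitely generated modules commute with localization --- the same argument in different packaging.
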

\begin{proof} We may assume that $\mathrm{Ann} M = 0$. Therefore $\mathrm{Assh}M = \mathrm{Assh} R$. By duality we have $\mathrm{Ann} T^d(M) = \mathrm{Ann} H^d_{\fm}(M)$. So by Corollary
\ref{C3.2} we need only to show that $\mathrm{Ann}\,(M/U_M(0)) =
U_{R}(0)$ which is equivalent to
$$\{r \in R \mid \dim rM < d\} = \{r \in R \mid \dim rR < d\}.$$
Since $M$ is a faithfully $R$-module then the following homomorphism
$$R \to M^k, x \mapsto (xm_1, \ldots,xm_k)$$
is injective, where
$m_1, \ldots,m_k$ are generators of $M$. We also have a natural projective
homomorphism $R^k \to M$. Thus for all $\frak p \in \mathrm{Spec}(R)$ and for all $r \in R$ we have $rM_\frak p = 0$ iff $r R_{\frak p} = 0$. Therefore
\begin{eqnarray*}
\{r \in R \mid \dim rM < d\} &=& \{r \in R \mid rM_{\frak p}=0,
\forall\, \frak p \in \mathrm{Assh}M\}\\
&=& \{r \in R \mid rR_{\frak p}=0, \forall\, \frak p \in
\mathrm{Assh}R\}\\
&=& \{r \in R \mid \dim rR < d\}.
\end{eqnarray*}
The proof is complete.
\end{proof}

\begin{corollary}\label{C5.4}
Let $(R, \frak{m})$ be a complete local ring, and $M$ a finitely
generated $R$-module of dimension $d$. Let $\underline{x} = x_1, \ldots, x_d$ be a sequence of elements. Let $\cap_{\frak p \in
\mathrm{Ass}\,M}N(\frak p) = 0$ be a reduced primary decomposition
of $(0)$ in $M$. Then
$$\bigcap_{n\geq 1}
(\underline{x}^{[n]})_M^{\lim} = \bigcap_{\frak p \in J}N(\frak p),$$
where $J = \{\frak p \in \mathrm{Ass}M \mid \underline{x}\, \text{
is a system of parameters of}\, R/\frak p\}$.
\end{corollary}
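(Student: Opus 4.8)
The plan is to prove the desired equality by comparing the two submodules
$$N:=\bigcap_{n\ge1}(\underline{x}^{[n]})_M^{\lim}\qquad\text{and}\qquad L:=\bigcap_{\fp\in J}N(\fp),$$
establishing $L\subseteq N$ by a vanishing theorem for top local cohomology and then deducing $N=L$ by passing to $M/L$ and invoking Theorem \ref{thmA}. First I would record the shape of $J$: since a system of parameters of $R/\fp$ must consist of exactly $\dim R/\fp$ elements, the condition $\fp\in J$ forces $\dim R/\fp=d$, so $J\subseteq\mathrm{Assh}M$, and $J$ collects precisely those $\fp\in\mathrm{Assh}M$ with $\dim R/((\underline{x})+\fp)=0$. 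Write $\mathrm{Ass}M=J\sqcup J'$ and set $L'=\bigcap_{\fp\in J'}N(\fp)$, so that $L\cap L'=0$ by the reducedness of the decomposition.

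For the inclusion $L\subseteq N$ I would first locate $\mathrm{Ass}(L)$. Since $L\cap L'=0$, the module $L$ embeds in $M/L'$, and $M/L'\hookrightarrow\bigoplus_{\fp\in J'}M/N(\fp)$ gives $\mathrm{Ass}(M/L')\subseteq J'$, whence $\mathrm{Ass}(L)\subseteq J'$. The crucial point is then $H^d_{(\underline{x})}(L)=0$. Indeed, every $\fq\in\mathrm{Ass}(L)$ with $\dim R/\fq=d$ lies in $J'$, hence not in $J$, so $\underline{x}$ is not a system of parameters of $R/\fq$ and $\dim R/((\underline{x})+\fq)\ge1$; as $R$ is complete, the Lichtenbaum--Hartshorne vanishing theorem (cf. \cite{BS98}) yields $H^d_{(\underline{x})}(L)=0$ (if $\dim L<d$ this is just Grothendieck vanishing). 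By Lemma \ref{L2.5}(i), applied to $L$ and the $d$-element sequence $\underline{x}$, we get $(\underline{x}^{[n]})_L^{\lim}=L$ for every $n$, and the inclusion chain from the proof of Corollary \ref{C2.6} gives $L=(\underline{x}^{[n]})_L^{\lim}\subseteq(\underline{x}^{[n]})_M^{\lim}$ for all $n$, i.e. $L\subseteq N$.

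For the reverse direction I would set $\overline{M}=M/L$. Since $L\subseteq N\subseteq(\underline{x}^{[n]})_M^{\lim}$ for every $n$, Proposition \ref{P2.7} applies and gives $(\underline{x}^{[n]})_{\overline{M}}^{\lim}=(\underline{x}^{[n]})_M^{\lim}/L$, hence $\bigcap_{n}(\underline{x}^{[n]})_{\overline{M}}^{\lim}=N/L$. Now $M/L\hookrightarrow\bigoplus_{\fp\in J}M/N(\fp)$ forces $\mathrm{Ass}(\overline{M})\subseteq J\subseteq\mathrm{Assh}M$, so $\dim\overline{M}=d$, the module $\overline{M}$ is unmixed ($U_{\overline{M}}(0)=0$), and, since each $\fp\in J$ satisfies $\dim R/((\underline{x})+\fp)=0$, the sequence $\underline{x}$ is a system of parameters of $\overline{M}$. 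Theorem \ref{thmA} then gives $\bigcap_{n}(\underline{x}^{[n]})_{\overline{M}}^{\lim}=U_{\overline{M}}(0)=0$, so $N/L=0$ and $N=L$. The degenerate case $J=\varnothing$ (so $L=M$) is covered by the same argument: the Lichtenbaum--Hartshorne hypothesis then holds for all $\fp\in\mathrm{Assh}M$, giving $H^d_{(\underline{x})}(M)=0$ and $N=M=L$ by Proposition \ref{P3.3}.

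The main obstacle is the vanishing $H^d_{(\underline{x})}(L)=0$, and this is exactly where completeness is essential, entering through the Lichtenbaum--Hartshorne theorem; it is what lets us absorb the ``bad'' primary components (those $\fp$ for which $\underline{x}$ fails to be a system of parameters of $R/\fp$) into the limit closure $(\underline{x}^{[n]})_M^{\lim}$ at every level $n$. Once this vanishing is secured, the remaining steps are formal consequences of Proposition \ref{P2.7} and Theorem \ref{thmA}.
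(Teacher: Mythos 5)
Your proof is correct and follows essentially the same route as the paper's: both split the reduced primary decomposition along $J$, use the Lichtenbaum--Hartshorne vanishing theorem to show the complementary component $L=\bigcap_{\fp\in J}N(\fp)$ satisfies $H^d_{(\underline{x})}(L)=0$ and hence lies in every $(\underline{x}^{[n]})_M^{\lim}$, and then pass to $M/L$ and conclude via Proposition \ref{P2.7} and Theorem \ref{thmA}. The differences are only cosmetic: you invoke Lemma \ref{L2.5} where the paper cites Proposition \ref{P3.3}, and you spell out the verifications ($J\subseteq\mathrm{Assh}M$, $\mathrm{Ass}(L)\subseteq\mathrm{Ass}M\setminus J$, the case $J=\varnothing$) that the paper leaves implicit.
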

\begin{proof}
Set $N = \cap_{\frak p\in J}N(\frak p)$.  Then $\mathrm{Ass}\,N =
\mathrm{Ass}\,M \setminus J$. By the Lichtenbaum-Hartshorne vanishing Theorem
we have $H^d_{(\underline{x})}(N) = 0$. Therefore
$\cap_{n\geq 1}(\underline{x}^{[n]})^{\lim}_M \supseteq \cap_{n\geq
1}(\underline{x}^{[n]})^{\lim}_N = N$ by Proposition \ref{P3.3}. Set $M' =
M/N$, we have $\Ass M' = J \subseteq \mathrm{Assh}M$. Then $\underline{x}$ is a system of parameters of $M'$ and
$U_{M'}(0)=0$. Hence $\cap_{n\geq 1}(\underline{x}^{[n]})_{M'}^{\lim}
= 0$ by Theorem \ref{thmA}. Thus $\cap_{n\geq
1}(\underline{x}^{[n]})^{\lim}_M = N$ by Proposition \ref{P2.7}.
\end{proof}

We prove a non-vanishing result of local cohomology.

\begin{corollary} Let $M$ be a finitely generated $R$-module of dimension $d$ and $x_1, \ldots, x_r$ be a part of system of parameters. Then $H^r_{(x_1, \ldots, x_r)}(M) \neq 0$.
\end{corollary}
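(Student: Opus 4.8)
The plan is to deduce this from the top non-vanishing $H^d_{\mathfrak m}(M)\neq 0$ already established in Corollary \ref{C3.2}, via a change-of-support (composite functor) argument. Since $\underline x=x_1,\dots,x_r$ is a part of a system of parameters of $M$, I would first complete it to a full system of parameters $x_1,\dots,x_d$ and set $\underline y=x_{r+1},\dots,x_d$. Because $M/(x_1,\dots,x_d)M$ has finite length, the ideal $(\underline x)+(\underline y)=(x_1,\dots,x_d)$ is $\mathfrak m$-primary on $M$, so $H^i_{(\underline x)+(\underline y)}(M)=H^i_{\mathfrak m}(M)$ for every $i$.

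Next I would invoke the Grothendieck spectral sequence attached to the factorization $\Gamma_{(\underline y)}\circ\Gamma_{(\underline x)}=\Gamma_{(\underline x)+(\underline y)}$ of section functors, namely
$$E_2^{p,q}=H^p_{(\underline y)}\big(H^q_{(\underline x)}(M)\big)\ \Longrightarrow\ H^{p+q}_{\mathfrak m}(M).$$
The two essential inputs are purely about the number of generators: $H^q_{(\underline x)}(-)=0$ for $q>r$, since $(\underline x)$ is generated by the $r$ elements $x_1,\dots,x_r$, and $H^p_{(\underline y)}(-)=0$ for $p>d-r$, since $(\underline y)$ is generated by $d-r$ elements. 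Consequently the only $E_2$-entry lying in total degree $d$ is the corner term $E_2^{\,d-r,\,r}=H^{d-r}_{(\underline y)}\big(H^r_{(\underline x)}(M)\big)$.

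The heart of the argument is that this corner term survives to $E_\infty$. Every differential emanating from $E^{\,d-r,\,r}$ lands in a column with first index $>d-r$ (hence zero), and every differential into it originates from a row with second index $>r$ (hence zero); so no nonzero differential touches the corner. Therefore $E_2^{\,d-r,\,r}=E_\infty^{\,d-r,\,r}=H^d_{\mathfrak m}(M)$, which is nonzero by Corollary \ref{C3.2}. In particular $H^{d-r}_{(\underline y)}\big(H^r_{(\underline x)}(M)\big)\neq 0$, which is impossible unless $H^r_{(\underline x)}(M)\neq 0$, as claimed.

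I expect the only real obstacle to be the spectral-sequence bookkeeping: confirming the two vanishing ranges and the absence of differentials at the corner, together with the (standard) acyclicity hypothesis that $\Gamma_{(\underline x)}$ carries injectives to $\Gamma_{(\underline y)}$-acyclics. As a spectral-sequence-free fallback I would record a downward induction on $s$ from $d$ to $r$, using the standard long exact sequence
$$\cdots\to H^{s-1}_{(x_1,\dots,x_{s-1})}(M)_{x_s}\to H^{s}_{(x_1,\dots,x_s)}(M)\to H^{s}_{(x_1,\dots,x_{s-1})}(M)\to\cdots;$$
here the last displayed term vanishes because $(x_1,\dots,x_{s-1})$ is generated by $s-1$ elements, so $H^{s-1}_{(x_1,\dots,x_{s-1})}(M)_{x_s}$ surjects onto the (inductively) nonzero module $H^{s}_{(x_1,\dots,x_s)}(M)$ and is therefore itself nonzero, completing the step; the base case $s=d$ is again $H^d_{\mathfrak m}(M)\neq 0$ from Corollary \ref{C3.2}.
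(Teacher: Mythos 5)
Your proof is correct, but it takes a genuinely different route from the paper's. The paper stays entirely inside its limit-closure framework: it extends $x_1,\ldots,x_r$ to a full system of parameters $x_1,\ldots,x_d$, observes $(x_1^n,\ldots,x_r^n)^{\lim}_M\subseteq(x_1^n,\ldots,x_d^n)^{\lim}_M$, deduces $\bigcap_{n\ge 1}(x_1^n,\ldots,x_r^n)^{\lim}_M\subseteq U_M(0)$ from Theorem \ref{thmA}, and then applies the annihilator identity of Remark \ref{R2.2} (ii) to obtain
$$\operatorname{Ann}H^r_{(x_1,\ldots,x_r)}(M)=\bigcap_{n\ge 1}\operatorname{Ann}\bigl(M/(x_1^n,\ldots,x_r^n)^{\lim}_M\bigr)\subseteq\operatorname{Ann}\bigl(M/U_M(0)\bigr),$$
which is a proper ideal because $U_M(0)\ne M$; hence $H^r_{(x_1,\ldots,x_r)}(M)\neq 0$. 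You instead reduce to Grothendieck's non-vanishing $H^d_{\mathfrak m}(M)\neq 0$ (the paper's Corollary \ref{C3.2}) by a change-of-support argument: either the composite-functor spectral sequence $E_2^{p,q}=H^p_{(x_{r+1},\ldots,x_d)}\bigl(H^q_{(x_1,\ldots,x_r)}(M)\bigr)\Rightarrow H^{p+q}_{\mathfrak m}(M)$, whose corner term $E_2^{d-r,r}$ survives to $E_\infty$ by the two cohomological-dimension bounds, or the localization long exact sequence giving a surjection of $\bigl(H^{s-1}_{(x_1,\ldots,x_{s-1})}(M)\bigr)_{x_s}$ onto $H^{s}_{(x_1,\ldots,x_s)}(M)$ in a downward induction. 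Both versions are sound; the inputs you flag as needing verification are indeed standard ($\Gamma_{\mathfrak a}$ of an injective module is injective over a Noetherian ring, cohomological dimension is bounded by the number of generators, and $H^i_{(x_1,\ldots,x_d)}(M)\cong H^i_{\mathfrak m}(M)$ since $\sqrt{(x_1,\ldots,x_d)+\operatorname{Ann}M}=\mathfrak m$). What each approach buys: yours is pure textbook local cohomology, needing nothing about limit closures beyond the single input $H^d_{\mathfrak m}(M)\neq 0$, so it proves the statement for any module satisfying that non-vanishing; the paper's argument is shorter given the machinery already built and yields slightly more, namely the explicit containment of $\operatorname{Ann}H^r_{(x_1,\ldots,x_r)}(M)$ in $\operatorname{Ann}(M/U_M(0))$.
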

\begin{proof} Extend $x_1, \ldots, x_r$ to a full system of parameters $\underline{x} = x_1, \ldots, x_d$. For all $n \ge 1$ we have $(x_1^n, \ldots, x_r^n)^{\lim}_M \subseteq (\underline{x}^{[n]})^{\lim}_M$. Therefore
  $$\bigcap_{n \ge 1}(x_1^n, \ldots, x_r^n)^{\lim}_M \subseteq U_M(0)$$
by Theorem \ref{thmA}. By Remark \ref{R2.2} (ii) we have
$$\Ann H^r_{(x_1, \ldots, x_r)}(M) = \bigcap_{n \ge 1} \Ann M/(x_1^n, \ldots, x_r^n)^{\lim}_M \subseteq \Ann M/U_M(0).$$
Thus $H^r_{(x_1, \ldots, x_r)}(M) \neq 0$. The proof is complete.
\end{proof}

\section{Systems of parameters}

In this section, we prove a characterization of systems of parameters in terms of injectivity of the determinatal maps. Our results improve known results of Dutta and Roberts in \cite{DR} and of Fouli and Huneke in \cite{FH}. Let $\underline{x} = x_1, \ldots, x_t$  be a system of parameters of $R$, and $\underline{y} =
y_1, \ldots, y_t$ a sequence of elements such that $(\underline{y})
\overset{A}{\subseteq} (\underline{x})$.
 Dutta and Roberts  proved in \cite{DR} that  if $R$ is a Cohen-Macaulay ring, then $\underline{y}$ is a system of
parameters if and only if the determinantal map $\det A: R/(\underline{x}) \to
R/(\underline{y})$  is injective. The following result is a slight generalization of Dutta-Roberts's result.
\begin{theorem}\label{T2.9}
Let $R$ be a local ring such that $R/U_R(0)$ is Cohen-Macaulay. Let
$\underline{x}$ be a system of parameters and $\underline{y}$ a
sequence of elements in $R$ such that $(\underline{y})
\overset{A}{\subseteq} (\underline{x})$. Then $\underline{y}$ is a
system of parameters of $R/U_R(0)$ if and only if the determinantal
map $\det A: R/(\underline{x})^{\lim} \to R/(\underline{y})^{\lim}$
 is injective.

\end{theorem}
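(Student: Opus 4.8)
The plan is to reduce the assertion to the Cohen--Macaulay ring $\overline{R} := R/U_R(0)$, where it becomes accessible through the classical Dutta--Roberts theorem. Write $U := U_R(0)$ and recall that $\dim U < t = \dim R$, so that $\overline{R}$ is Cohen--Macaulay of dimension $t$, the image of $\underline{x}$ is a system of parameters of $\overline{R}$ (as $(\underline{x})$ is $\fm$-primary), and the reduced relation $\overline{\mathbf{y}} = \overline{A}\,\overline{\mathbf{x}}$ still holds with $\det\overline{A}$ equal to the image of $\det A$. Since $\dim U < t$ and both $\underline{x}^{[n]}$ and $\underline{y}^{[n]}$ are sequences of $t$ elements, Corollary \ref{C2.6} gives $U \subseteq (\underline{x}^{[n]})^{\lim}$ and $U \subseteq (\underline{y}^{[n]})^{\lim}$ for every $n \ge 1$ (this also follows from Theorem \ref{thmA}).

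Hence $U \subseteq \bigcap_{n>0}(\underline{x}^{[n]})^{\lim}$ and $U \subseteq \bigcap_{n>0}(\underline{y}^{[n]})^{\lim}$, so Proposition \ref{P2.7} (with $M=R$, $N=U$) yields
$$(\underline{x})_{\overline{R}}^{\lim} = (\underline{x})^{\lim}/U, \qquad (\underline{y})_{\overline{R}}^{\lim} = (\underline{y})^{\lim}/U.$$
In particular $U$ lies in both $(\underline{x})^{\lim}$ and $(\underline{y})^{\lim}$, giving canonical identifications $R/(\underline{x})^{\lim} \cong \overline{R}/(\underline{x})_{\overline{R}}^{\lim}$ and $R/(\underline{y})^{\lim} \cong \overline{R}/(\underline{y})_{\overline{R}}^{\lim}$ under which the determinantal map $\det A$ becomes $\det\overline{A}$. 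Therefore $\det A$ is injective if and only if $\det\overline{A}\colon \overline{R}/(\underline{x})_{\overline{R}}^{\lim} \to \overline{R}/(\underline{y})_{\overline{R}}^{\lim}$ is injective, and since $R/U_R(0) = \overline{R}$, it remains to prove the theorem for the Cohen--Macaulay ring $\overline{R}$.

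Now I work in $\overline{R}$, where $(\underline{x})_{\overline{R}}^{\lim} = (\underline{x})\overline{R}$ since $\underline{x}$ is a regular sequence. For the direction ``$\underline{y}$ a system of parameters $\Rightarrow$ $\det\overline{A}$ injective'' I would invoke Remark \ref{R2.3}: if $\underline{y}$ is a system of parameters of $\overline{R}$ then $\sqrt{(\underline{x})\overline{R}} = \fm\overline{R} = \sqrt{(\underline{y})\overline{R}}$, so $(\underline{x}^{[n]}) \subseteq (\underline{y})$ for large $n$, whence Remark \ref{R2.3} forces $\det\overline{A}$ to be injective. For the converse I combine the Dutta--Roberts theorem with a kernel comparison. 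Consider the factorization
$$\overline{R}/(\underline{x})\overline{R} \xrightarrow{\ \det\overline{A}\ } \overline{R}/(\underline{y})\overline{R} \xrightarrow{\ \pi\ } \overline{R}/(\underline{y})_{\overline{R}}^{\lim},$$
whose composite is exactly $\det\overline{A}$ on the limit-closure quotients. As $\pi$ is surjective, the kernel of the composite contains that of the first map, so non-injectivity of the first map propagates. By the Dutta--Roberts theorem, when $\underline{y}$ is not a system of parameters of the Cohen--Macaulay ring $\overline{R}$ the first map $\overline{R}/(\underline{x})\overline{R} \to \overline{R}/(\underline{y})\overline{R}$ is not injective, hence neither is $\det\overline{A}$, and the claim follows.

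The main obstacle, and the point where the hypothesis that $R/U_R(0)$ is Cohen--Macaulay is genuinely used, is this converse direction: one needs the Dutta--Roberts criterion to produce non-injectivity at the level of honest quotients $\overline{R}/(\underline{x})\overline{R} \to \overline{R}/(\underline{y})\overline{R}$, after which the surjection onto the limit-closure quotient can only enlarge the kernel. The remainder is bookkeeping, namely checking that $U_R(0)$ is contained in both limit closures and that Proposition \ref{P2.7} identifies the limit closures in $R$ with those in $\overline{R}$, so that the determinantal map is transported without loss of injectivity.
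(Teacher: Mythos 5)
Your proposal is correct and follows essentially the same route as the paper: reduce modulo $U_R(0)$ using Corollary \ref{C2.6} and Proposition \ref{P2.7} to identify the determinantal map on limit-closure quotients over $R$ with the one over the Cohen--Macaulay ring $R/U_R(0)$, then invoke the Dutta--Roberts criterion there. The only difference is that the paper simply cites the (module version of the) Dutta--Roberts result at that point, whereas you spell out the two directions explicitly via Remark \ref{R2.3} and the kernel comparison through the surjection onto $\overline{R}/(\underline{y})_{\overline{R}}^{\lim}$; this is just filling in details the paper leaves to the citation.
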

\begin{proof} By Corollary \ref{C2.6} we have that $U_R(0)
\subseteq (\underline{x}^{[s]})^{\lim}$ for all $s \geq 1$. Set
$R' = R/U_R(0)$, and we consider $R'$ as an $R$-module. By
Proposition \ref{P2.7} we have
$$(\underline{x})_{R'}^{\lim} = (\underline{x})^{\lim}/U_R(0); (\underline{y})_{R'}^{\lim} = (\underline{y})^{\lim}/U_R(0).$$
Therefore the deteminatal maps
$$\det A: R/(\underline{x})^{\lim} \to R/(\underline{y})^{\lim}$$
and
$$\det A: R'/(\underline{x})_{R'}^{\lim} \to R'/(\underline{y})_{R'}^{\lim}$$
coincide. Notice that $R'$ is Cohen-Macaulay. Then the conclusion follows from the module-version of Dutta-Roberts' theorem.
\end{proof}
As a consequence of Theorem \ref{T2.9} we obtain the following result of Fouli and Huneke \cite[Theorem 4.4]{FH}.
\begin{corollary}
Let $R$ be a $1$-dimensional Noetherian local ring. Let $x$ be a
parameter element, and let $y = ux$. Then $y$ is a parameter element if and only if
the map $R/(x)^{\lim} \overset{u}{\longrightarrow} R/(y)^{\lim}$ is
injective.
\end{corollary}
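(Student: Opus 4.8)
The plan is to obtain this corollary as a direct specialization of Theorem \ref{T2.9} to the case $\dim R = 1$ with a single parameter. The crucial observation is that the Cohen-Macaulay hypothesis of that theorem is automatic here. Indeed, in a $1$-dimensional local ring the unmixed component $U_R(0)$ is the largest submodule of dimension strictly less than $1$, that is $U_R(0) = H^0_\fm(R)$. Hence $R' := R/U_R(0)$ still has dimension $1$, but $\fm \notin \Ass R'$, so $\depth R' \ge 1 = \dim R'$ and $R'$ is Cohen-Macaulay. This is exactly what is needed to invoke Theorem \ref{T2.9}.

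Next I would assemble the data required by Theorem \ref{T2.9}. Taking the system of parameters to be the single element $x$ and the sequence to be the single element $y = ux$, the containment $(y) \subseteq (x)$ is witnessed by the $1 \times 1$ matrix $A = (u)$, so that $\det A = u$ and the determinantal map $R/(x)^{\lim} \to R/(y)^{\lim}$ is precisely multiplication by $u$, which is the map in the statement. Theorem \ref{T2.9} then asserts that this map is injective if and only if $y$ is a system of parameters of $R'$.

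It remains to translate the condition that $y$ be a parameter of $R'$ into the condition that $y$ be a parameter of $R$. I would argue that these two conditions are equivalent, since each is equivalent to $y$ avoiding every minimal prime $\fp$ of $R$ with $\dim R/\fp = 1$. These primes are exactly the elements of $\mathrm{Assh}R$, and they correspond bijectively to the minimal (equivalently, associated) primes of $R'$; the remaining associated primes, which lie in the support of $U_R(0)$, have dimension $0$ and so play no role in whether $R/(y)$ has finite length. Combining this equivalence with the conclusion of Theorem \ref{T2.9} completes the proof.

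I do not expect a genuine obstacle here; the only point requiring care is the dimension bookkeeping in the final step, namely that passing to the quotient by the finite-length submodule $U_R(0)$ leaves the parameter condition unchanged. This holds because $U_R(0)$ has dimension $0$ and therefore does not interfere with the $1$-dimensional components of $\Spec R$ that control whether $y$ is a parameter.
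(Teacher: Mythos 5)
Your proposal is correct and follows exactly the paper's own route: specialize Theorem \ref{T2.9} to the single parameter $x$ with $A=(u)$, note that in dimension one $U_R(0)=H^0_{\frak m}(R)$ so that $R/U_R(0)$ is automatically Cohen--Macaulay, and then translate the parameter condition between $R$ and $R/U_R(0)$. Your write-up merely makes explicit the dimension bookkeeping (via $\mathrm{Assh}\,R$) that the paper leaves implicit.
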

\begin{proof}
Since $\dim\,R=1$ we have that $U_R(0) = H^0_{\frak m}(R)$ and
$\overline{R} = R/U_R(0)$ is Cohen-Macaulay. Moreover, $y$ is a
parameter element of $R$ if and only if $y$ is also a parameter
element of $\overline{R}$. Hence the assertion follows from Theorem
\ref{T2.9}.
\end{proof}

In higher dimension, we need the condition that $R$ is equidimensional to claim that if a sequence $\underline{x} = x_1, \ldots ,x_t$ is a system of parameters of $R/U_R(0)$, then it is a system of parameters of $R$. We need the following in the sequel.
\begin{lemma}\label{L5.3}
Let $(R,\frak m)$ be a catenary equidimensional local ring of
dimension $t$, $\underline{x} = x_1, \ldots ,x_t$ a sequence of
elements of $R$. Then the following statements are equivalent:
\begin{enumerate}[{(i)}]\rm
    \item {\it $\underline{x}$ is a system of parameters of $R$;}
    \item {\it $\underline{x}$ is a system of parameters of
    $\widehat{R}/U_{\widehat{R}}(0)$.}
\end{enumerate}
\end{lemma}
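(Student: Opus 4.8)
The plan is to convert both conditions into statements about $\dim\widehat{R}/(\underline{x})\widehat{R}$ and then to reduce the equivalence to the fact that the hypotheses force $\widehat{R}$ to be equidimensional. Throughout I set $U=U_{\widehat{R}}(0)$; since $U$ is by definition the largest ideal of $\widehat{R}$ of dimension $<t$, the quotient $\widehat{R}/U$ is equidimensional of dimension $t$ with $\Min(\widehat{R}/U)=\mathrm{Assh}\,\widehat{R}$. I will use repeatedly that completion is faithfully flat, so that $\dim R/(\underline{x})=\dim\widehat{R}/(\underline{x})\widehat{R}$, and that the dimension of a quotient is read off from the minimal primes of its support.

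First I would dispose of (i)$\Rightarrow$(ii), which needs neither the catenary nor the equidimensional hypothesis. If $\underline{x}$ is a system of parameters of $R$ then $(\underline{x})R$ is $\fm$-primary, hence $(\underline{x})\widehat{R}$ is primary to the maximal ideal of $\widehat{R}$, and therefore so is its image in $\widehat{R}/U$. As $\dim\widehat{R}/U=t$ and $\underline{x}$ consists of $t$ elements, this forces $\underline{x}$ to be a system of parameters of $\widehat{R}/U$.

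For (ii)$\Rightarrow$(i) I would argue by dimension on minimal primes. Computing on the support gives $\dim\widehat{R}/(\underline{x})\widehat{R}=\max_{\mathfrak P\in\Min\widehat{R}}\dim\widehat{R}/(\mathfrak P+(\underline{x}))$, while, because $\Min(\widehat{R}/U)=\mathrm{Assh}\,\widehat{R}$ and each such $\mathfrak P$ contains $U$, hypothesis (ii) says precisely that $\dim\widehat{R}/(\mathfrak P+(\underline{x}))=0$ for every $\mathfrak P\in\mathrm{Assh}\,\widehat{R}$. Thus the only thing left to control is the contribution of the minimal primes $\mathfrak P$ of $\widehat{R}$ with $\dim\widehat{R}/\mathfrak P<t$. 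This is the whole difficulty: for such $\mathfrak P$ condition (ii) gives no information whatsoever, and indeed if $\underline{x}$ were to lie inside such a low-dimensional $\mathfrak P$ the conclusion would fail outright (this can already happen once $t\ge 3$).

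The hard part, and the place where the hypotheses are indispensable, is therefore to rule out these low-dimensional minimal primes. The key input is that a catenary equidimensional local ring is quasi-unmixed (formally equidimensional), so that $\widehat{R}$ is itself equidimensional and $\Min\widehat{R}=\mathrm{Assh}\,\widehat{R}$; this is Ratliff's characterization of quasi-unmixed local rings, and it is exactly where catenarity cannot be dropped, matching the example promised in Section~5. Granting it, the maximum above runs only over $\mathrm{Assh}\,\widehat{R}$, so $\dim\widehat{R}/(\underline{x})\widehat{R}=\dim(\widehat{R}/U)/(\underline{x})=0$ by (ii), whence $\dim R/(\underline{x})=0$ and $\underline{x}$ is a system of parameters of $R$. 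In short, the real obstacle is not the bookkeeping with minimal primes but being entitled to the equidimensionality of the completion; everything else is formal.
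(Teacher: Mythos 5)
Your argument for (i)$\Rightarrow$(ii) is fine, and so is your reduction of (ii)$\Rightarrow$(i) to controlling the minimal primes $\mathfrak{P}$ of $\widehat{R}$ with $\dim\widehat{R}/\mathfrak{P}<t$. But the ``key input'' you use to finish is false: catenary plus equidimensional does \emph{not} imply quasi-unmixed. Ratliff's theorem characterizes the quasi-unmixed (formally equidimensional) local rings as those that are equidimensional and \emph{universally} catenary, and plain catenarity is strictly weaker. Nagata's example, cited in this very paper (in the introduction and again in the Example of Section 6), is a two-dimensional Noetherian local \emph{domain} $R$ --- hence equidimensional, and catenary, since every two-dimensional local domain is catenary --- with $\widehat{R}\cong k[[X,Y,Z]]/((X)\cap(Y,Z))$, whose minimal primes have dimensions $2$ and $1$; so $\Min\widehat{R}\neq\mathrm{Assh}\,\widehat{R}$ and $U_{\widehat{R}}(0)\neq 0$. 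For exactly such rings your proof produces nothing, and these are the rings the lemma is really about: as you yourself observe, when $\widehat{R}$ is equidimensional the statement is essentially formal. (The non-catenary example in Section 5 shows the lemma \emph{fails} without catenarity; it is not evidence that catenarity forces equidimensionality of $\widehat{R}$.) If you strengthen the hypothesis to ``universally catenary and equidimensional,'' your one-line argument via Ratliff becomes correct, but it proves a strictly weaker lemma.

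The paper deals with the low-dimensional minimal primes of $\widehat{R}$ directly, by induction on $t$, never assuming $\widehat{R}$ equidimensional. From (ii) one gets $x_1\notin\mathfrak{P}$ for every $\mathfrak{P}\in\mathrm{Assh}\,\widehat{R}$; since each minimal prime $\mathfrak{p}$ of the equidimensional ring $R$ lies under some $\mathfrak{P}\in\mathrm{Assh}\,\widehat{R}$ (lift $\mathfrak{p}$ to a minimal prime of $\mathfrak{p}\widehat{R}$ with $\dim\widehat{R}/\mathfrak{P}=t$), it follows that $x_1$ avoids every minimal prime of $R$. Catenarity and equidimensionality then pass to $R/(x_1)$, which is catenary, equidimensional of dimension $t-1$. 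Finally, hypothesis (ii) descends to $R/(x_1)$: any $\mathfrak{P}'\in\mathrm{Assh}(\widehat{R}/(x_1)\widehat{R})$ cannot be minimal in $\widehat{R}$ (its contraction would be a minimal prime of $R$ containing $x_1$), so it strictly contains some minimal prime $\mathfrak{P}$ of $\widehat{R}$, which forces $\dim\widehat{R}/\mathfrak{P}=t$, i.e.\ $\mathfrak{P}\in\mathrm{Assh}\,\widehat{R}$; since $\mathfrak{P}+(\underline{x})\subseteq\mathfrak{P}'+(x_2,\ldots,x_t)$ and the left-hand side is primary to the maximal ideal, $x_2,\ldots,x_t$ is a system of parameters of $\widehat{R}/\mathfrak{P}'$. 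The inductive hypothesis applied to $R/(x_1)$ and $x_2,\ldots,x_t$ then yields (i). This dimension-lowering induction is the idea your proof is missing, and it cannot be replaced by an appeal to quasi-unmixedness.
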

\begin{proof}
(i) $\Rightarrow$ (ii) is clear.\\
(ii) $\Rightarrow$ (i) It is easily seen that $\underline{x}$ is a
system of parameters of $\widehat{R}/U_{\widehat{R}}(0)$ if and only if
$\underline{x}$ is a system of parameters of $\widehat{R}/\frak P$
for all $\frak P \in \mathrm{Assh} \widehat{R}$. We will
prove by induction on $t$ that if $\underline{x}$ is a system of
parameters of $\widehat{R}/\frak P$ for all $\frak P \in
\mathrm{Assh} \widehat{R}$, then $\underline{x}$ is a system of
parameters of $R$. The case $t=1$ is trivial. Suppose that $t>1$. We
choose an arbitrary prime ideal $\frak p \in \mathrm{Assh} R$, then
there exists $\frak P \in \mathrm{Assh}\widehat{R}$ such that $\frak
P \cap R = \frak p$. Since $\underline{x}$ is a system of parameters
of $\widehat{R}/\frak P$ we have $x_1 \notin \frak P$, and so $x_1
\notin \frak p$. Thus $R/(x_1)$ is a catenary equidimensional local
ring of dimension $t-1$. We will show that $\underline{x}' = x_2, \ldots ,x_t$ is a system of parameter of $\widehat{R}/\frak P'$ for all
$\frak P' \in \mathrm{Assh} \widehat{R}/(x_1)\widehat{R}$ . If
$\frak P' \in \mathrm{min}(\widehat{R})$, then $\frak P' \cap R =
\frak p \in \mathrm{Assh}R$ since $R$ is equdimensional. Therefore $x_1 \notin \frak P'$, it is
a contradiction. Thus there exists $\frak P \in \mathrm{Assh}
\widehat{R}$ such that $\frak P \subsetneq \frak P'$, note that $\dim \widehat{R}/\frak P' = d-1$. Because $\underline{x} = x_1, \ldots, x_t$ is a system of parameters of $\widehat{R}/ \frak P$ we have
$\underline{x}' = x_2, \ldots ,x_t$ is a system of parameter of
$\widehat{R}/\frak P'$. By the inductive hypothesis $\underline{x}' = x_2, \ldots ,x_t$ is a system of parameters of $R/(x_1)$. Thus $\underline{x}$ is a system of parameters of $R$ as required.
\end{proof}

The following is the main result of this section.
\begin{theorem} \label{T5.4}
Let $(R,\frak m)$ be a catenary local ring of dimension $t$. There
exists a positive integer $\ell$, which depends only on $R$, with property: whenever $\underline{x} =
x_1, \ldots ,x_t$ is a system of parameters of $R/U_R(0)$ with
$(\underline{x}) \subseteq \frak m^\ell$ and $\underline{y}
=y_1, \ldots,y_t$ a sequence of elements such that $(\underline{y})
\overset{A}{\subseteq} (\underline{x})$ the following statements are
equivalent:
\begin{enumerate}[{(i)}]\rm
    \item {\it $\underline{y}$ forms a system of parameters of $R/U_R(0)$;}
    \item {\it The determinantal map $R/(\underline{x})^{\lim} \xrightarrow{\det
A} R/(\underline{y})^{\lim}$ is injective.}
\end{enumerate}
\end{theorem}

\begin{proof} (i) $\Rightarrow$ (ii) follows from Remark \ref{R2.3} and the same argument as the proof of Theorem \ref{T2.9}.\\
(ii) $\Rightarrow$ (i). We first show that the assertion in the
case $R$ is complete. By Corollary \ref{C2.6} and Proposition
\ref{P2.7} we may assume henceforth that $U_R(0) = 0$ and hence $R$ is
equidimensional. Assume that $\mathrm {Ass }R = \mathrm {Assh }R = \{\frak p_1, \ldots,\frak
p_n\}$ and $\cap_{\frak p _i\in \mathrm{Ass}\,R}N(\frak p_i) = 0$ is
a reduced primary decomposition of $(0)$. For $1\leq i\leq n$ we set
$$L_i = \bigcap_{j \neq i, \frak p_j \in
\mathrm{Ass}R}N(\frak p_j).$$
Let $\underline{z} = z_1, \ldots ,z_t$
is a system of parameters of $R$.  By Theorem \ref{thmA} we have
$\cap_{n \geq 1}(\underline{z}^{[n]})^{\lim} = 0$. Then there is a
positive integer $\ell_1$ such that $L_i \nsubseteq
(\underline{z}^{[\ell_1]})^{\lim}$ for all $ i= 1, \ldots, n$. Let $\ell$ be a positive integer such that $\frak m^{\ell} \subseteq
(\underline{z}^{[\ell_1]})$.

Suppose we have $\underline{x} = x_1, \ldots ,x_t$
and $\underline{y} =y_1, \ldots,y_t$ are sequences of elements contained
in $\frak m^{\ell}$ such that $(\underline{y})
\overset{A}{\subseteq} (\underline{x})$, $\underline{x}$ is a
system of parameters of $R$ and the determinantal map $R/(\underline{x})^{\lim}
\xrightarrow{\det
A} R/(\underline{y})^{\lim}$ is injective. Assume  $\underline{y}$ is not a system of parameter of $R$. By relabeling (if necessarily) we can assume henceforth that $\underline{y} =y_1, \ldots,y_t$ is not a system of parameters of
$R/\frak p_1$. By Corollary \ref{C5.4} we have $0 \neq L_1 \subseteq
(\underline{y})^{\lim}$.  On the other hand, it follows from Remark
\ref{R2.3}  that $(\underline{x})^{\lim} \subseteq
(\underline{z}^{[\ell_1]})^{\lim}$. Hence $L_i \nsubseteq
(\underline{x})^{\lim}$ for all $i = 1, \ldots, n$. Thus there is $0 \neq u \in
(\underline{y})^{\lim} \setminus (\underline{x})^{\lim}$. Therefore the determinantal map $R/(\underline{x})^{\lim}
\xrightarrow{\det
A} R/(\underline{y})^{\lim}$ maps $u + (\underline{x})^{\lim}$ to $0$. So it is not
injective, a contradiction. Hence $\underline{y} =y_1, \ldots,y_t$ is a system of parameters of $R$.\\
The assertion in general case now follows from Lemma \ref{L5.3} since $R/U_R(0)$ is catenary and equidimensional. The proof is complete.
\end{proof}

\begin{proof}[Proof of Theorem \ref{thmC}] It immediately follows from Theorem \ref{T5.4} and the fact $\underline{x}$ is a system of parameters of $R$ if and only if $\underline{x}$ is a system of parameters of $R/U_R(0)$ provided $R$ is equidimensional.
\end{proof}

Theorem \ref{thmC} was proved by Fouli and
Huneke for any equidimensional local ring (cf. \cite[Corollary 5.4]{FH}). However, in fact, they proved this result under additional assumption that $R$ is complete. Thus our result is a generalization of their one. We will show that the catenary condition is essential.

\begin{example}[see, \cite{N}, Example 2, pp 203--205]\rm Let $K$ be a
field, and $K[[X]]$ a formal power series ring. Let $Z = \sum_{i\geq
1}a_ix^i$ be an algebraically independent element over $K(X)$. Set
$Z_j =(Z - \sum_{k<j}a_kX^k)/X^{j-1}$. Furthermore let $Y$ be an
algebraically independent element over $K[X, Z]$. Let $R_1 = K[X,
Z_1, \ldots,Z_j,\ldots]$ and set $R_2 = R_1[Y]$. Let $\frak n_1 = (X, Y)$,
$\frak n_2 = (X-1, Z, Y)$ are maximal ideals of $R_2$ with
$\mathrm{ht}(\frak n_1) = 2$ and $\mathrm{ht}(\frak n_2) = 3$. Let
$S$ be the intersection of complements of $\frak n_1$ and $\frak
n_2$ in $R_2$ and set $R' = (R_2)_S$. Then $R'$ is Noetherian. Let
$\frak m$ be the Jacobson radical of $R'$ and set $R = K+\frak m$.
We have $(R,\frak m)$ is a local domain of dimension $3$. However
$R$ is non-catenary since $0 \subset \frak q = XR' \cap R \subset
\frak m$ is a maximal chain of prime ideals in $R$. Since $(R, \fm)$ is a local ring of dimension three, $\fq$ is generated by three elements up to radical.
\end{example}
\begin{proposition} Let $R, \frak m, \frak q$ as in the previous Example, and let $\underline{x} = x_1, x_2, x_3$ be any system of parameters of $R$. Then
there exists $\underline{y} = y_1,y_2,y_3$ such that
$(\underline{y}) \overset{A}{\subseteq} (\underline{x})$ and the
determinantal map $R/(\underline{x})^{\lim} \xrightarrow{\det
A} R/(\underline{y})^{\lim}$ is injective but
$\underline{y}$ is not a system of parameters.
\end{proposition}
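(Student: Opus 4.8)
The plan is to build $\underline y$ directly out of the prime $\fq$, transport the injectivity question to the completion $\widehat R$, and exploit that $\widehat R$ is \emph{not} equidimensional---this failure being forced by the non-catenary-ness of $R$. First I would invoke the last line of the Example: since $\dim R=3$, the prime $\fq$ is generated up to radical by three elements, say $\sqrt{(z_1,z_2,z_3)}=\fq$. As $z_i\in\fq\subseteq\fm$ and $(\underline x)$ is $\fm$-primary, we have $z_i^N\in\fm^N\subseteq(\underline x)$ for $N\gg0$; put $\underline y=z_1^N,z_2^N,z_3^N$. Then $(\underline y)\overset{A}{\subseteq}(\underline x)$ for a suitable matrix $A$, while $\sqrt{(\underline y)}=\fq$ gives $\dim R/(\underline y)=\dim R/\fq=1>0$, so $\underline y$ is \emph{not} a system of parameters. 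What remains is the injectivity of $\det A\colon R/(\underline x)^{\lim}\to R/(\underline y)^{\lim}$.

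Next I would complete. The colon ideals defining the limit closure commute with the flat base change $R\to\widehat R$ (the defining unions stabilize by Noetherianness), so $(\underline x)^{\lim}\widehat R=(\underline x)^{\lim}_{\widehat R}$ and likewise for $\underline y$; hence the base change of $\det A$ is the determinantal map over $\widehat R$, and by faithful flatness it suffices to prove injectivity there. Write $U=U_{\widehat R}(0)$ and $S=\widehat R/U$. By Corollary \ref{C2.6} we have $U\subseteq(\underline x)^{\lim}_{\widehat R}\cap(\underline y)^{\lim}_{\widehat R}$, so Proposition \ref{P2.7} identifies the map over $\widehat R$ with the determinantal map $S/(\underline x)^{\lim}_S\to S/(\underline y)^{\lim}_S$. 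The crux is the claim that $\underline y$ is a system of parameters of $S$; equivalently $\dim\widehat R/(\fq\widehat R+U)=0$; equivalently (Lichtenbaum--Hartshorne, used exactly as in Corollary \ref{C5.4}) $H^3_{\fq}(R)\neq0$.

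Granting the claim the proof ends quickly. Since $\underline x$ is a system of parameters of $R$ it is one of $S$, so $\sqrt{(\underline x)S}=\sqrt{(\underline y)S}=\fm_S$, and Remark \ref{R2.3} gives injectivity of $S/(\underline x)^{\lim}_S\to S/(\underline y)^{\lim}_S$; unwinding the identifications above yields injectivity of $\det A$ over $\widehat R$, hence over $R$. The claim is precisely what rescues injectivity despite $\underline y$ failing to be a system of parameters: were $H^3_{\fq}(R)=0$, then $(\underline y)^{\lim}=R$ by Lemma \ref{L2.5}(i), the target would vanish, and the map would not be injective since $(\underline x)^{\lim}\neq R$ by the monomial property in dimension three.

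The main obstacle is therefore the claim, and this is where Nagata's explicit ring must enter. Because $R$ is a non-catenary local domain it is not formally equidimensional (Ratliff), so $\widehat R$ has minimal primes of differing dimensions; concretely $R'=(R_2)_S$ is module-finite over $R$ and $\widehat{R'}=B_1\times B_2$, where $B_1,B_2$ are the completions at $\fn_1,\fn_2$ with $\dim B_1=2$ and $\dim B_2=3$, so the top-dimensional part of $\widehat R$ sits inside the branch $B_2$ coming from $\fn_2$. The one-dimensionality of $R/\fq$ is carried entirely by the two-dimensional branch $\fn_1$---there $X\in\fn_1$ cuts out the curve $V(XR')$ with $\sqrt{\fq R'_{\fn_1}}\supseteq XR'_{\fn_1}$---whereas $X-1\in\fn_2$ makes $X$ a unit in $B_2$, which forces $\fq B_2$ to be $\fm$-primary, hence $\dim\widehat R/(\fq\widehat R+\frak P)=0$ for every $\frak P\in\mathrm{Assh}\widehat R$ (the three-dimensional part lying in the single branch $B_2$ makes this uniform, matching the equivalences in the claim). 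Verifying this last collapse in Nagata's coordinates is the one genuinely computational step; it is the exact incarnation of the non-catenary defect $\dim R-\Ht\fq-\dim R/\fq=1$ and is what separates this ring from the catenary equidimensional setting of Theorem \ref{T5.4}.
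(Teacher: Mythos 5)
Your overall strategy matches the paper's: the same choice of $\underline y$ (high powers of elements generating $\fq$ up to radical), the same passage to $\widehat R$, the same reduction modulo $U_{\widehat R}(0)$ via Corollary \ref{C2.6} and Proposition \ref{P2.7}, and the same appeal to Remark \ref{R2.3} for injectivity (the paper packages these steps as the implication (i) $\Rightarrow$ (ii) of Theorem \ref{T5.4} applied to $\widehat R$). The problem is at the crux, which you yourself flag as ``the one genuinely computational step'' and then do not carry out: the claim that $\fq B_2$ is primary to the maximal ideal of the three-dimensional branch $B_2$. Your stated justification --- $X-1\in\fn_2$ makes $X$ a unit in $B_2$, ``which forces'' $\fq B_2$ to be $\fm$-primary --- is a non sequitur as written. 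From $\fq\subseteq XR'$ and $XB_2=B_2$ one learns nothing about $\fq B_2$ (the inclusion $\fq B_2\subseteq XB_2=B_2$ is vacuous), and since $\fq=XR'\cap R$ is not given by explicit generators, nothing you say rules out a positive-dimensional component of $V(\fq R')$ inside $\Spec R'_{\fn_2}$ having nothing to do with $X$. What actually rules it out is the conductor structure of Nagata's ring: $\fm$ is a common ideal of $R$ and $R'$, so $R'_c=R_c$ for every $c\in\fm$, whence $\frak Q\mapsto\frak Q\cap R$ is a bijection between the primes of $R'$ not containing $\fm$ (equivalently, the non-maximal primes, since any prime containing $\fm=\fn_1R'\cap\fn_2R'$ equals $\fn_1R'$ or $\fn_2R'$) and the primes of $R$ different from $\fm$. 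A non-maximal prime $\frak Q\subseteq\fn_2R'$ containing $\fq$ would contract to a prime lying between $\fq$ and $\fm$, hence to $\fq$ itself because $\dim R/\fq=1$, and the bijection would force $\frak Q=XR'$ --- impossible, as $X\notin\fn_2$. Only after this does the ``unit'' observation give $\dim B_2/\fq B_2=0$. Since this claim is precisely the content that makes the proposition work (injective determinantal map although $\underline y$ is not a system of parameters), leaving it unproved is a genuine gap.

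For comparison, the paper avoids the branch decomposition $\widehat{R'}=B_1\times B_2$ entirely and proves the key claim abstractly: writing $\fq\widehat R=\frak Q_1\cap\cdots\cap\frak Q_r$ as a reduced primary decomposition with $\frak Q_i$ being $\frak P_i$-primary, the fact that $R/\fq$ is a one-dimensional local domain (hence Cohen--Macaulay, hence with unmixed completion) gives $\dim\widehat R/\frak P_i=1$ and $\Ht(\frak P_i/\fq\widehat R)=0$; the flat dimension formula \cite[Theorem 15.1]{Mat} then yields $\Ht\frak P_i=\Ht\fq+\Ht(\frak P_i/\fq\widehat R)=1$; finally, if $\frak P\in\mathrm{Assh}\,\widehat R$ satisfied $\frak P\subseteq\frak P_i$, catenarity of the complete local domain $\widehat R/\frak P$ would give $3=\dim\widehat R/\frak P\le\Ht(\frak P_i/\frak P)+\dim\widehat R/\frak P_i\le 2$, a contradiction, so $\frak P\nsubseteq\frak P_i$ for all $i$ and therefore $\dim\widehat R/(\fq\widehat R+\frak P)=0$. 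This uses only $\Ht\fq=1$, $\dim R/\fq=1$, $\dim R=3$ and completeness. So to repair your proof, either import the conductor argument sketched above into your branch-theoretic picture, or replace your final paragraph by the paper's abstract one; the remainder of your argument then goes through.
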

\begin{proof} As above, $\fq = \sqrt{(z_1, z_2, z_3)}$ for some  $z_1, z_2, z_3$. We may choose the sequence $\underline{y} = y_1, y_2,y_3$ with $y_i = z_i^k$, $1\le i \le 3$, for large enough $k$ such that
$(\underline{y}) \overset{A}{\subseteq} (\underline{x})$. It is clear that
$\underline{y}$ is not a system of parameters of $R$. We now show that the
determinantal map
$$R/(\underline{x})^{\lim} \xrightarrow{\det
A} R/(\underline{y})^{\lim}$$
is injective. It is equivalent to prove that the
determinantal map
$$\widehat{R}/(\underline{x})_{\widehat{R}}^{\lim} \xrightarrow{\det
A} \widehat{R}/(\underline{y})_{\widehat{R}}^{\lim}$$ is injective.
By
Theorem \ref{T5.4} it is sufficient to prove that $\underline{y}$ is
a system of parameters of $\widehat{R}/\frak P$ for any $\frak P \in
\mathrm{Assh}\widehat{R}$, that is $\dim \widehat{R} / \frak P = 3$. Indeed, let $\frak q \widehat{R} = \frak Q_1 \cap
\cdots \cap \frak Q_r$, where $\frak Q_i$ is a $\frak P_i$-primary,
be a reduced primary decomposition of $\frak q \widehat{R}$. Since
$\dim R/\frak q =1$ we have $\dim \widehat{R}/\frak P_i =1$ and
$\mathrm{ht} (\frak P_i/\frak q \widehat{R})=0$ for all $i\leq r$.
Thus by \cite[Theorem 15.1]{Mat} we have $\mathrm{ht}(\frak P_i) =
\mathrm{ht} (\frak q) + \mathrm{ht}(\frak P_i/\frak q\widehat{R}) =
1$ for all $i \leq r$. Moreover, $\widehat{R}$ is
catenary, so $\frak P \nsubseteq \frak P_i$ for all $i \leq r$. Therefore $\dim
\widehat{R}/(\frak q \widehat{R} + \frak P) = 0$. Hence
$\underline{y}$ is a system of parameters of $\widehat{R}/\frak P$
for all $\frak P \in \mathrm{Assh}\widehat{R}$ since
$\sqrt{(\underline{y})}= \frak q$. The proof is complete.
\end{proof}

\section{A characterization of unmixed local rings}

 Unmixed local rings  were introduced by Nagata \cite{N} as follows.
\begin{definition}\rm Let $(R,\frak m)$ be a Noetherian local ring of dimension $t$. Then $R$
is {\it unmixed} if $U_{\widehat{R}}(0)=0$ i.e. $\mathrm{Assh}\,
\widehat{R} = \mathrm{Ass}\, \widehat{R}$, where $\widehat{R}$
denotes the completion of $R$ with respect to the $\frak m$-adic
topology.
\end{definition}
Almost of domains in Commutative Algebra are unmixed. However, in
\cite[Example 2, pp. 203--205]{N} Nagata constructed a domain of
dimension two which is not unmixed. Unmixed local rings were
investigated by several authors (cf. \cite{Ra1,Ra2,Tr81}). Let $\underline{x} = x_1, \ldots, x_t$ be a system of
parameters of $R$. By Krull's intersection theorem we have $\cap_{n
\geq 1}(\underline{x}^{[n]}) = 0$. It means that the topology
defined by $\{(\underline{x}^{[n]})\}_{n \geq 1}$ is always
Hausdorff. However, the topology defined by
$\{(\underline{x}^{[n]})^{\lim}\}_{n \geq 1}$ may be not Hausdorff.
In fact, $\{(\underline{x}^{[n]})^{\lim}\}_{n \geq 1}$ is a Hausdorff
topology if and only if $U_R(0)=0$ by Theorem \ref{thmA}. The aim of
this section is to give a characterization of unmixed local rings in
terms of the topology defined by $\{(\underline{x}^{[n]})^{\lim}\}_{n
\geq 1}$.
\begin{lemma} Let $\underline{x} = x_1, \ldots, x_t$ and $\underline{y} = y_1, \ldots, y_t$ be systems of parameters of $R$. Then the topology
defined by $\{(\underline{x}^{[n]})^{\lim}\}_{n \geq 1}$ and $\{(\underline{y}^{[n]})^{\lim}\}_{n \geq 1}$ are equivalent.
\end{lemma}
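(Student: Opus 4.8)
The plan is to reduce everything to the $\fm$-adic topology. Recall that two decreasing filtrations by ideals define equivalent topologies precisely when each is cofinal in the other, i.e.\ for every $n \ge 1$ there exists $m \ge 1$ with $(\underline{y}^{[m]}) \subseteq (\underline{x}^{[n]})$, and symmetrically with the roles of $\underline{x}$ and $\underline{y}$ interchanged. Rather than comparing the two filtrations directly (which is awkward, since they are not termwise comparable), I would show separately that each of $\{(\underline{x}^{[n]})\}_{n \ge 1}$ and $\{(\underline{y}^{[n]})\}_{n \ge 1}$ is equivalent to the $\fm$-adic topology; transitivity of equivalence then finishes the proof.

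The key elementary observation is the sandwich of inclusions
$$(\underline{x})^{t(n-1)+1} \subseteq (\underline{x}^{[n]}) \subseteq (\underline{x})^n.$$
The right inclusion is immediate, since each generator $x_i^n$ of $(\underline{x}^{[n]})$ lies in $(\underline{x})^n$. For the left inclusion I would use a pigeonhole argument: a monomial $x_1^{a_1}\cdots x_t^{a_t}$ with $\sum_{i} a_i \ge t(n-1)+1$ must have some $a_i \ge n$, for otherwise $\sum_i a_i \le t(n-1)$; hence every such monomial is divisible by some $x_i^n$ and so lies in $(\underline{x}^{[n]})$. As $(\underline{x})^{t(n-1)+1}$ is generated by monomials of this degree, the inclusion follows. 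These two inclusions show that the filtration $\{(\underline{x}^{[n]})\}_{n \ge 1}$ is cofinal in, and cofinal with, the $(\underline{x})$-adic filtration $\{(\underline{x})^n\}_{n \ge 1}$.

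Next, since $\underline{x} = x_1, \ldots, x_t$ is a system of parameters, the ideal $(\underline{x})$ is $\fm$-primary, so there is an integer $s \ge 1$ with $\fm^s \subseteq (\underline{x}) \subseteq \fm$. Taking powers gives $\fm^{sn} \subseteq (\underline{x})^n \subseteq \fm^n$ for all $n$, whence the $(\underline{x})$-adic topology coincides with the $\fm$-adic topology. Combining this with the previous paragraph, the topology defined by $\{(\underline{x}^{[n]})\}_{n \ge 1}$ is the $\fm$-adic topology. The identical argument applied to the system of parameters $\underline{y}$ shows that $\{(\underline{y}^{[n]})\}_{n \ge 1}$ also defines the $\fm$-adic topology, and therefore the two topologies are equivalent.

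I do not expect a genuine obstacle here. The only points requiring mild care are the pigeonhole inclusion $(\underline{x})^{t(n-1)+1} \subseteq (\underline{x}^{[n]})$ and the conceptual point that the comparison must be made at the level of topologies, via mutual cofinality of the filtrations, and not by termwise containment, since $(\underline{x}^{[n]})$ and $(\underline{y}^{[n]})$ need not be comparable for a fixed $n$.
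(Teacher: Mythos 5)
Your argument is correct for the statement as literally printed: the sandwich $(\underline{x})^{t(n-1)+1} \subseteq (\underline{x}^{[n]}) \subseteq (\underline{x})^n$ together with $\fm^s \subseteq (\underline{x}) \subseteq \fm$ shows that $\{(\underline{x}^{[n]})\}_{n\ge 1}$ defines exactly the $\fm$-adic topology, and likewise for $\underline{y}$. But this makes the lemma a triviality, and that is a signal worth heeding: the printed statement has a typo, and the intended filtrations are $\{(\underline{x}^{[n]})^{\lim}\}_{n\ge 1}$ and $\{(\underline{y}^{[n]})^{\lim}\}_{n\ge 1}$. This is clear from the paper's own proof (which works throughout with the limit closures), from the definition immediately following the lemma (the ``limit closure topology''), and above all from Theorem \ref{thmD}: if the limit closure topology were always the $\fm$-adic topology, that theorem would assert that every Noetherian local ring is unmixed, which Nagata's example refutes.

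For the intended statement your strategy cannot be repaired, because reducing to the $\fm$-adic topology is exactly what is impossible in general: by Theorem \ref{thmD} the topology defined by $\{(\underline{x}^{[n]})^{\lim}\}_{n\ge 1}$ is equivalent to the $\fm$-adic one if and only if $R$ is unmixed, and indeed in Nagata's example the paper exhibits a system of parameters $a,b$ with $(a^n,b^n)^{\lim} \nsubseteq \fm^2$ for all $n\ge 1$. The paper instead compares the two filtrations directly: given $n$, the $\fm$-primariness of $(\underline{x}^{[n]})$ yields $v(n)$ with $(\underline{y}^{[v(n)]}) \subseteq \fm^{v(n)} \subseteq (\underline{x}^{[n]})$, and then --- this is the step your proposal has no substitute for --- Remark \ref{R2.3} (injectivity of the determinantal map for parameter ideals with equal radical, via \cite[Lemma 3.1]{CHL}) upgrades the inclusion of ideals to an inclusion of limit closures, $(\underline{y}^{[v(n)]})^{\lim} \subseteq (\underline{x}^{[n]})^{\lim}$; by symmetry the two limit-closure filtrations are mutually cofinal. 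Note that limit closure is not monotone under arbitrary inclusions of ideals, so this passage genuinely requires Remark \ref{R2.3} rather than a formal ``take closures of both sides.''
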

\begin{proof} For each $n \ge 1$ there is an integral $v(n)$ such that $(\underline{x}^{[n]}) \supseteq (\underline{y}^{[v(n)]})$. By Remark \ref{R2.3} we have $(\underline{x}^{[n]})^{\lim} \supseteq (\underline{y}^{[v(n)]})^{\lim}$. Therefore the topology defined by $\{(\underline{y}^{[n]})^{\lim}\}_{n \geq 1}$ is stronger than or equal to the topology defined by $\{(\underline{x}^{[n]})^{\lim}\}_{n \geq 1}$. Symmetrically, we have the converse. The proof is complete.
\end{proof}
By the previous Lemma we can define a topology of $R$ as follows.
\begin{definition}\rm Let $(R, \frak m)$ be a local ring of dimension $t$. We define the {\it limit closure topology} of $R$ the topology defined by $\{(\underline{x}^{[n]})^{\lim}\}_{n \geq 1}$ for some system of parameters $\underline{x} = x_1, \ldots, x_t$.
\end{definition}

The following result, proved by Chevalley (cf. \cite [Lemma 7]{Ch}), plays the key role in our proof the main result of this section.
\begin{lemma}[Chevalley] \label{L4.2}
Let $(R,\frak m)$ be a complete Noetherian local ring, and $\frak
a_1 \supseteq \frak a_2 \supseteq \cdots$ a chain of ideals of $R$
such that $\cap_{n\geq 1}\frak a_n = 0$. Then for each $n$ there
exists an integer $v(n)$ such that $\frak a_{v(n)} \subseteq \frak m^n$. In
other words, the linear topology defined by $\{\frak a_n\}_{n\geq
1}$ is stronger than or equal to the $\frak m$-adic topology.
\end{lemma}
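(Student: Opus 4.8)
The plan is to argue by contradiction: I fix the target exponent $n$ and assume that $\fa_v \not\subseteq \fm^n$ for \emph{every} $v$, and from this I manufacture a nonzero element lying in $\bigcap_v \fa_v$, contradicting the hypothesis $\bigcap_v \fa_v = 0$. The two structural facts I intend to exploit are that each quotient $R/\fm^{n+j}$ has finite length, so that descending chains of ideals in it stabilize, and that $R$ is complete, so that a suitably constructed $\fm$-adically Cauchy sequence has a limit.

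First I would pass to the Artinian quotients. For each $j \ge 0$ the images $(\fa_v + \fm^{n+j})/\fm^{n+j}$ form a descending chain of ideals in the finite-length ring $R/\fm^{n+j}$, hence stabilize; let $N_j$ be a threshold index, chosen increasing in $j$, beyond which $\fa_v + \fm^{n+j} = \fa_{N_j} + \fm^{n+j}$ for all $v \ge N_j$. In the case $j=0$ the standing assumption $\fa_v \not\subseteq \fm^n$ guarantees that the stabilized ideal is nonzero, so I may choose a base element $z_0 \in \fa_{N_0} \setminus \fm^n$.

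Next I would build inductively a sequence $z_0, z_1, z_2, \ldots$ with $z_j \in \fa_{N_j}$ and $z_{j+1} \equiv z_j \pmod{\fm^{n+j}}$. The inductive step is where the stabilization at level $n+j$ is used: since $z_j \in \fa_{N_j} \subseteq \fa_{N_j} + \fm^{n+j} = \fa_{N_{j+1}} + \fm^{n+j}$, I can write $z_j = z_{j+1} + m$ with $z_{j+1} \in \fa_{N_{j+1}}$ and $m \in \fm^{n+j}$, and then $z_{j+1}$ has both required properties. By construction every $z_j$ is congruent to $z_0$ modulo $\fm^n$, so no $z_j$ lies in $\fm^n$; and because $z_{j+1}-z_j \in \fm^{n+j}$, the sequence $\{z_j\}$ is $\fm$-adically Cauchy. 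Completeness then yields a limit $z$ with $z \equiv z_0 \pmod{\fm^n}$, so that $z \ne 0$.

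The main obstacle, and the only genuinely delicate point, is to verify that $z$ actually lies in each $\fa_v$, since the correction terms $z_{j+1}-z_j$ belong only to powers of $\fm$ rather than to the ideals $\fa_v$ themselves. I would resolve this by fixing $v$, choosing $j$ with $N_j \ge v$, and observing that $z_{j'} \in \fa_{N_{j'}} \subseteq \fa_v$ for all $j' \ge j$ while $z - z_{j'} \in \fm^{n+j'}$; hence $z \in \fa_v + \fm^t$ for every $t$, so $z \in \bigcap_t (\fa_v + \fm^t) = \fa_v$ by the Krull intersection theorem, i.e.\ each $\fa_v$ is $\fm$-adically closed. Since $v$ is arbitrary, $z \in \bigcap_v \fa_v = 0$, contradicting $z \ne 0$. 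The topological reformulation, that $\{\fa_n\}_{n \ge 1}$ defines a topology at least as fine as the $\fm$-adic one, is then immediate.
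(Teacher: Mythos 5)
The paper offers no proof of this lemma at all: it is quoted as Chevalley's result (\cite[Lemma 7]{Ch}) and used as a black box in Section 6, so there is no internal argument to compare yours against; what you have written makes that ingredient self-contained. Judged on its own merits, your proof is correct, and it is in essence the classical argument for Chevalley's theorem: stabilization of the descending chains $(\fa_v+\fm^{n+j})/\fm^{n+j}$ in the finite-length quotients $R/\fm^{n+j}$, an inductively constructed sequence $z_j\in\fa_{N_j}$ with $z_{j+1}\equiv z_j \pmod{\fm^{n+j}}$, completeness to produce a limit $z$ with $z\equiv z_0\pmod{\fm^n}$ (hence $z\neq 0$), and the Krull intersection theorem, in the form $\bigcap_t(\fa_v+\fm^t)=\fa_v$, to force $z\in\bigcap_v\fa_v=0$, a contradiction. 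Each step checks out: the inductive step correctly uses $\fa_{N_{j+1}}+\fm^{n+j}=\fa_{N_j}+\fm^{n+j}$, which holds because $N_{j+1}\ge N_j$ and $N_j$ is a stabilization threshold at level $n+j$. One small point should be made explicit: your final step needs, for each $v$, some $j$ with $N_j\ge v$, i.e.\ the thresholds $N_j$ must be unbounded; a weakly increasing choice does not guarantee this by itself. The fix is immediate, since any index beyond a stabilization threshold is again a threshold, so you may simply impose $N_j\ge j$ (or say ``strictly increasing''), but as written the phrase ``chosen increasing in $j$'' leaves the legitimacy of choosing $j$ with $N_j\ge v$ unjustified.
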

We now prove Theorem \ref{thmD} proposed in the introduction.
\begin{theorem}\label{T4.3}
Let $(R,\frak m)$ be a Noetherian local ring of dimension $t$. Then $R$ is
unmixed if and only if the $\frak m$-adic and limit closure topologies are equivalent.
\end{theorem}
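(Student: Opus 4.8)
The plan is to compare the two topologies through the completion $\widehat{R}$, using Theorem \ref{thmA} to identify the intersection of the limit-closure neighbourhoods and Chevalley's Lemma \ref{L4.2} to supply the missing refinement. First I would record the easy half that holds for an arbitrary local ring: fix a system of parameters $\underline{x} = x_1, \ldots, x_t$; since each $(\underline{x}^{[n]})$ is $\fm$-primary there is an integer $N = N(n)$ with $\fm^N \subseteq (\underline{x}^{[n]}) \subseteq (\underline{x}^{[n]})^{\lim}$, so the $\fm$-adic topology is always at least as fine as the limit closure topology. Consequently the two topologies are equivalent if and only if, conversely, for every $m$ there is an $n$ with $(\underline{x}^{[n]})^{\lim} \subseteq \fm^m$; this single chain-of-inclusions condition is what the whole statement reduces to.

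The technical backbone I would establish next is that the limit closure commutes with completion. Writing $(\underline{x}^{[n]})^{\lim}_R = (\underline{x}^{[n(s+1)]}):_R(x_1\cdots x_t)^{ns}$ for $s \gg 0$ (Noetherian stabilization), flat base change for the colon of a finitely generated ideal by an element gives $(\underline{x}^{[n]})^{\lim}_R\,\widehat{R} = (\underline{x}^{[n]})^{\lim}_{\widehat{R}}$, while faithful flatness gives $(\underline{x}^{[n]})^{\lim}_{\widehat{R}} \cap R = (\underline{x}^{[n]})^{\lim}_R$. With this in hand I would apply Theorem \ref{thmA} to the complete ring $\widehat{R}$, noting that $\underline{x}$ remains a system of parameters there, to obtain $\bigcap_{n}(\underline{x}^{[n]})^{\lim}_{\widehat{R}} = U_{\widehat{R}}(0)$.

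For the forward implication, assume $R$ is unmixed, that is $U_{\widehat{R}}(0)=0$. Then the descending chain $\{(\underline{x}^{[n]})^{\lim}_{\widehat{R}}\}_{n\ge 1}$ in the complete ring $\widehat{R}$ has zero intersection, so Chevalley's Lemma yields, for each $m$, an $n$ with $(\underline{x}^{[n]})^{\lim}_{\widehat{R}} \subseteq (\fm\widehat{R})^m$; intersecting with $R$ then gives $(\underline{x}^{[n]})^{\lim}_R \subseteq \fm^m$, which is exactly the refinement needed for equivalence. For the converse, equivalence supplies for each $m$ an $n$ with $(\underline{x}^{[n]})^{\lim}_R \subseteq \fm^m$; extending ideals to $\widehat{R}$ gives $(\underline{x}^{[n]})^{\lim}_{\widehat{R}} \subseteq (\fm\widehat{R})^m$, whence $\bigcap_n (\underline{x}^{[n]})^{\lim}_{\widehat{R}} = 0$, and Theorem \ref{thmA} on $\widehat{R}$ forces $U_{\widehat{R}}(0)=0$, i.e. $R$ is unmixed.

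The main obstacle is the commutation of limit closure with completion; once the chain of limit closures is transported faithfully to the complete ring, Chevalley's Lemma and Theorem \ref{thmA} do the rest. The conceptual subtlety worth flagging is that mere Hausdorffness of the limit-closure topology only detects $U_R(0)=0$, so one genuinely must work over $\widehat{R}$ in order to capture Nagata's notion $U_{\widehat{R}}(0)=0$.
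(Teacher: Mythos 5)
Your proposal is correct and follows essentially the same route as the paper: both reduce to the complete ring, use Theorem \ref{thmA} to identify $\bigcap_n(\underline{x}^{[n]})^{\lim}_{\widehat{R}} = U_{\widehat{R}}(0)$, and invoke Chevalley's Lemma \ref{L4.2} for the forward refinement, with the compatibility $(\underline{x}^{[n]})^{\lim}_{\widehat{R}} = (\underline{x}^{[n]})^{\lim}\widehat{R}$ (which you make explicit and the paper uses implicitly) transporting inclusions between $R$ and $\widehat{R}$. The only cosmetic difference is in the converse, where the paper argues contrapositively via Krull's intersection theorem (finding $n_0$ with $U_{\widehat{R}}(0) \nsubseteq \widehat{\fm}^{n_0}$) while you argue directly that the refinement forces the intersection to vanish; these are the same argument read in opposite directions.
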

\begin{proof} We note that the $\frak m$-adic topology is
always stronger than or equal to the topology defined by $\{(\underline{x}^{[n]})^{\lim}\}_{n
\geq 1}$  since  $(\underline{x}^{[n]})^{\lim}$ is $\frak
m$-primary for all $n \geq
1$.\\
$(\Rightarrow)$ We assume that $R$ is unmixed. Then by Theorem \ref{thmA} the  topology defined by
$\{(\underline{x}^{[n]})_{\widehat{R}}^{\lim}\}_{n \geq 1}$
is Hausdorff. By Chevalley's theorem, for each $n$ there exists
 an integer $v(n)$ such that $(\underline{x}^{[v(n)]})_{\widehat{R}}^{\lim}
\subseteq \widehat{\frak m}^n$. Thus $(\underline{x}^{[v(n)]})^{\lim}
\subseteq {\frak m}^n$. Therefore the
topology defined by $\{(\underline{x}^{[n]})^{\lim}\}_{n \geq 1}$  is stronger than or
equal to the $\frak
m$-adic topology. Thus they are equivalent.\\
$(\Leftarrow)$ Suppose that $R$ is not unmixed i.e. $U_{\widehat{R}}(0)
\neq 0$. By Krull's intersection theorem, there exists $n_0$ such
that $U_{\widehat{R}}(0) \nsubseteq {\widehat{\frak m}}^{n_0}$. On the other hand, we get by
Theorem \ref{thmA} that
$U_{\widehat{R}}(0) \subseteq
(\underline{x}^{[n]})_{\widehat{R}}^{\lim}$ for all $n \ge 1$. Therefore
$(\underline{x}^{[n]})_{\widehat{R}}^{\lim} \nsubseteq
{\widehat{\frak m}}^{n_0}$ for all $n$. Thus
$(\underline{x}^{[n]})^{\lim} \nsubseteq {{\frak m}}^{n_0}$ for all
$n \ge 1$, so the  topology defined by $\{(\underline{x}^{[n]})^{\lim}\}_{n \geq 1}$  is
not equivalent to the $\frak m$-adic topology.
\end{proof}
\begin{corollary}\label{C4.4}
Let $(R,\frak m)$ be a Noetherian local ring such that $U_R(0)=0$. Suppose that
 the $\frak m$-adic topology is minimal among all Hausdorff
topologies of $R$. Then $R$ is unmixed.
\end{corollary}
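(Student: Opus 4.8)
The plan is to deduce Corollary \ref{C4.4} from Theorem \ref{T4.3} by verifying its hypothesis. Since $U_R(0) = 0$ is assumed, the $\frak m$-adic topology and the limit closure topology are comparable: as observed at the start of the proof of Theorem \ref{T4.3}, the $\frak m$-adic topology is always stronger than or equal to the topology defined by $\{(\underline{x}^{[n]})^{\lim}\}_{n \ge 1}$, because each $(\underline{x}^{[n]})^{\lim}$ is $\frak m$-primary. Moreover, because $U_R(0)=0$, Theorem \ref{thmA} guarantees that $\cap_{n \ge 1}(\underline{x}^{[n]})^{\lim} = U_R(0) = 0$, so the limit closure topology is Hausdorff.

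The key step is then to invoke the minimality assumption. We have a Hausdorff topology on $R$ — namely the limit closure topology — which is coarser than (weaker than or equal to) the $\frak m$-adic topology. Since the $\frak m$-adic topology is assumed to be minimal among all Hausdorff topologies of $R$, nothing Hausdorff can be strictly coarser than it; hence the limit closure topology must coincide with the $\frak m$-adic topology. In other words, the two topologies are equivalent.

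First I would state that $U_R(0)=0$ forces the limit closure topology to be Hausdorff via Theorem \ref{thmA}. Then I would record that it is always weaker or equal to the $\frak m$-adic topology, so by minimality the two agree. Finally, having established that the $\frak m$-adic and limit closure topologies are equivalent, I would apply the forward direction's converse in Theorem \ref{T4.3}: equivalence of these two topologies is exactly the criterion shown there to be equivalent to unmixedness, so $R$ is unmixed. I expect no genuine obstacle here; the only point requiring care is the direction of the inequality between topologies (which is the \emph{coarser} and which the \emph{finer}), so that the minimality hypothesis is applied correctly — the limit closure topology is the weaker one, and minimality of the $\frak m$-adic topology among Hausdorff topologies pushes the weaker Hausdorff topology back up to equal it.
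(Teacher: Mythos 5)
Your proof is correct and is exactly the argument the paper intends: the corollary is stated as an immediate consequence of Theorem \ref{T4.3}, obtained by noting that $U_R(0)=0$ makes the limit closure topology Hausdorff (Theorem \ref{thmA}), that it is always weaker than or equal to the $\frak m$-adic topology, and that minimality then forces the two topologies to coincide, whence unmixedness follows from the $(\Leftarrow)$ direction of Theorem \ref{T4.3}. You also handle the one delicate point (which topology is coarser, so that minimality applies in the right direction) correctly.
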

\begin{example}\rm By Nagata (cf. \cite[Example 2, pp. 203--205]{N}) we have there exists a local domain $(R, \frak m)$ of dimension two such that $\widehat{R} \cong k[[X,Y,Z]]/((X) \cap (Y,Z)) = k[[x,y,z]]$, where $k$ is a field. Let $a, b$ be a system of parameters of $R$. We have $U_{\widehat{R}}(0) = (x) \notin \widehat{\frak m}^2$. Therefore, $(a^n,b^n)^{\lim}_{\widehat{R}} \nsubseteq \widehat{\frak m}^k$ for all $n \ge 1$ and all $k \ge 2$. Thus $\{(a^n,b^n)^{\lim}\}_{n \ge 1}$ is a Hausdorff topology of $R$ but $(a^n,b^n)^{\lim} \nsubseteq \frak m^k$ for all $n \ge 1$ and all $k \ge 2$.
\end{example}
\section{Limit closure in local rings of dimension two}
By the monomial conjecture (theorem) we have $(\underline{x})^{\lim} \subseteq \fm$ for every system of parameters $\underline{x}$. Recently, Ma, Smirnov and the second author \cite{MQS20} proved under mild condition that $(\underline{x})^{\lim} \subseteq \overline{(\underline{x})}$ the integral closure of $(\underline{x})$. The purpose of the section is to give some explicit descriptions for the limit closure in local rings of dimension two. Let $\underline{x} = x_1, \ldots, x_d$ be a system of parameters of the local ring $(R, \frak m)$. Since $(\underline{x})^{\lim}_{\widehat{R}} = (\underline{x})^{\lim}\widehat{R}$, we shall assume that $(R, \frak m)$ is an image of a Cohen-Macaulay local ring.\\

\noindent {\bf $S_2$-ification.}(cf. \cite{HH94}) Suppose $R$ is an unmixed local ring. We shall say that a ring $S$ is an {\it $S_2$-ification} of $R$ if it lies between $R$ and its total quotient ring, is module-finite over $R$, is $S_2$
as an $R$-module, and has the property that for every element $s \in S - R$, the ideal $D(s)$, defined as $\{r \in R \mid rs \in R\}$, has height
at least two.
\begin{remark}\rm
\begin{enumerate}[{(i)}]
\item If $(R, \frak m)$ is complete, $ R$ has a $S_2$-ification, and it is unique. Moreover, if $\omega$ is a canonical module of $R$, then $S \cong \mathrm{Hom}(\omega, \omega)$.
\item Let $\frak a_i = \mathrm{Ann}H^i_{\frak m}(R)$, $i = 0, \ldots, d$, and $\frak a = \frak a_0 \ldots \frak a_{d-1}$. If $R$ is an image of a Cohen-Macaulay local ring, we have $\dim R/\frak a_i \le i$ and so $\dim R/\frak a \le d-1$ (cf. \cite[Theorem 8.1.1]{BH98}, \cite{CC15}). Moreover if $R$ is unmixed we have $\dim R/\frak a \le d-2$. We have the $S_2$-ification of $R$ is just the ideal transformation $D_{\frak a}(R)$ (see \cite{BS98} for the definition of ideal transformation). Thus the $S_2$-ification of an unmixed local ring exists provided the ring is an image of a Cohen-Macaulay local ring.
\end{enumerate}
\end{remark}

The following appears implicitly in the proof of \cite[Theorem 4.3]{MQ17}. For the sake of completeness
we give a detailed proof.
\begin{theorem}\label{T5.2}
Let $(R, \frak m)$ be an unmixed local ring of dimension $d$ and $\underline{x} = x_1, \ldots, x_d$ a system of parameters of $R$. Let $S$ be the $S_2$-ification of $R$. Then $(\underline{x})^{\lim} = (\underline{x})^{\lim}_{S} \cap R$.
\end{theorem}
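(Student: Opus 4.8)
```latex
The plan is to establish the equality $(\underline{x})^{\lim} = (\underline{x})^{\lim}_S \cap R$ by treating $S$ as a finite $R$-module and exploiting the good properties of the $S_2$-ification. First I would record the basic containment $(\underline{x})^{\lim} \subseteq (\underline{x})^{\lim}_S \cap R$, which is immediate from the definition: any element $a \in R$ landing in $(x_1^{n+1},\ldots,x_d^{n+1})R :_R (x_1\cdots x_d)^n$ automatically lies in $(x_1^{n+1},\ldots,x_d^{n+1})S :_S (x_1\cdots x_d)^n$, since multiplication is compatible with the inclusion $R \hookrightarrow S$. The content is the reverse inclusion $(\underline{x})^{\lim}_S \cap R \subseteq (\underline{x})^{\lim}$.

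The key structural input is that $S$ is an $S_2$ $R$-module of the same dimension $d$ with no embedded or lower-dimensional components, and moreover, since $R$ is unmixed and $S_2$-ified, the quotient $S/R$ is supported in codimension at least two, i.e.\ $\dim_R(S/R) \le d-2$. This is exactly the height condition $\Ht D(s) \ge 2$ in the definition of $S_2$-ification. My plan is to use this to control the limit closure of the \emph{module} $S$ over $R$: since $\dim_R(S/R) \le d-2$, Corollary \ref{C2.6} gives $S/R \subseteq (\underline{x})^{\lim}_{S/R}$, and by Proposition \ref{P2.7} (applied with $N = R$ viewed inside $S$, after checking $R \subseteq \cap_n (\underline{x}^{[n]})^{\lim}_S$, which again follows from the codimension bound) one can compare the limit closure of $\underline{x}$ in $S$ with its image modulo $R$. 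The precise mechanism is to show that if $a \in R$ and $a \in (\underline{x}^{[n+1]})S :_S (x_1\cdots x_d)^n$, then the obstruction to having $a \in (\underline{x}^{[m+1]})R :_R (x_1\cdots x_d)^m$ for some $m$ lives in $S/R$, whose local cohomology $H^d_{(\underline{x})}(S/R)$ vanishes because $\dim_R(S/R) < d$.

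Concretely, I would run the following argument. Fix $a \in (\underline{x})^{\lim}_S \cap R$, so $(x_1\cdots x_d)^n a \in (\underline{x}^{[n+1]})S$ for some $n$. Write $(x_1\cdots x_d)^n a = \sum_i x_i^{n+1} s_i$ with $s_i \in S$. The goal is to clear the denominators in $S$ back into $R$. Because $\dim_R(S/R) \le d-2 < d$, Theorem \ref{thmA} (or directly Proposition \ref{P3.3} together with the Lichtenbaum--Hartshorne vanishing, as used in Corollary \ref{C5.4}) yields $\cap_{k \ge 1}(\underline{x}^{[k]})^{\lim}_{S/R} = S/R$, equivalently $H^d_{(\underline{x})}(S/R) = 0$. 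From the short exact sequence $0 \to R \to S \to S/R \to 0$ and the associated long exact sequence in local cohomology, the vanishing of $H^d_{(\underline{x})}(S/R)$ forces $H^d_{(\underline{x})}(R) \to H^d_{(\underline{x})}(S)$ to be injective. Translating this injectivity through the identification of $M/(\underline{x}^{[n]})^{\lim}_M$ as a submodule of $H^d_{(\underline{x})}(M)$ from Remark \ref{R2.2}(ii), the class of $a$ in $R/(\underline{x})^{\lim}$ maps injectively into $S/(\underline{x})^{\lim}_S$, so $a \in (\underline{x})^{\lim}_S$ together with $a \in R$ already pins down $a \in (\underline{x})^{\lim}$.

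The main obstacle I anticipate is the careful bookkeeping in passing between the two natural descriptions of the limit closure: the colon-ideal description and the local-cohomology description via Remark \ref{R2.2}(ii). The cleanest route is to argue entirely cohomologically, reducing the whole statement to the injectivity of $H^d_{(\underline{x})}(R) \to H^d_{(\underline{x})}(S)$, which in turn reduces to the vanishing $H^d_{(\underline{x})}(S/R) = 0$; this last vanishing is where the $S_2$-ification hypothesis (codimension-two agreement of $S$ and $R$, hence $\dim_R(S/R) \le d-2$) is essential and is precisely what fails for a general finite birational extension. I would make sure to note that the dimension bound $\dim_R(S/R) \le d-2$ uses unmixedness of $R$ (Remark following the definition of $S_2$-ification), since without it one only gets $\le d-1$ and the vanishing argument breaks down.
```
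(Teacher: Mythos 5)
Your overall strategy coincides with the paper's: the short exact sequence $0 \to R \to S \to S/R \to 0$, the bound $\dim_R(S/R) \le d-2$, vanishing of local cohomology of $S/R$, and the identification of $R/(\underline{x})^{\lim}$ and $S/(\underline{x})^{\lim}_S$ as submodules of $H^d_{(\underline{x})}(R)$ and $H^d_{(\underline{x})}(S)$ via Remark \ref{R2.2}(ii), concluding that $R/(\underline{x})^{\lim} \to S/(\underline{x})^{\lim}_S$ is injective. However, your central homological deduction is wrong as stated. In the long exact sequence
$$\cdots \to H^{d-1}_{(\underline{x})}(S/R) \to H^{d}_{(\underline{x})}(R) \to H^{d}_{(\underline{x})}(S) \to H^{d}_{(\underline{x})}(S/R) \to 0,$$
the vanishing you establish, $H^{d}_{(\underline{x})}(S/R)=0$, sits to the \emph{right} of $H^{d}_{(\underline{x})}(S)$ and only yields surjectivity of $H^{d}_{(\underline{x})}(R) \to H^{d}_{(\underline{x})}(S)$; injectivity, which is what the theorem requires, comes from the vanishing of the term on the left, $H^{d-1}_{(\underline{x})}(S/R)$. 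That vanishing does hold here: since $\sqrt{(\underline{x})} = \frak m$ and $\dim_R(S/R) \le d-2$, Grothendieck vanishing gives $H^{d-1}_{\frak m}(S/R) = H^{d}_{\frak m}(S/R) = 0$, and this is exactly what the paper's commutative diagram records. So your proof is repairable by a one-line correction, but note that the error also contradicts your own closing remark: $H^{d}_{(\underline{x})}(S/R)=0$ already follows from $\dim S/R \le d-1$, so if your deduction were valid the bound $d-2$ (hence unmixedness) would be irrelevant; the genuine reason $d-2$ is essential is precisely that it kills the $(d-1)$-st cohomology.

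A second, smaller problem: the aside proposing to apply Proposition \ref{P2.7} with $M=S$, $N=R$ ``after checking $R \subseteq \cap_n(\underline{x}^{[n]})^{\lim}_S$'' cannot work. By Theorem \ref{thmA} applied to the $R$-module $S$, that intersection equals $U_S(0)$, which is zero because $S$ is $S_2$ of dimension $d$ and hence has no nonzero submodule of smaller dimension; so the hypothesis of Proposition \ref{P2.7} fails unless $R=0$. Fortunately this detour is not used in your main argument. Finally, the compatibility you defer as ``bookkeeping'' is handled in the paper by an explicit commutative diagram passing through Koszul cohomology $H^d(\underline{x};-)$, and only injectivity (not bijectivity) of $H^d_{\frak m}(R) \to H^d_{\frak m}(S)$ is actually needed to conclude $(\underline{x})^{\lim} = (\underline{x})^{\lim}_S \cap R$.
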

\begin{proof}
Consider the exact  sequence
$$0 \to R \to S \to S/R \to 0,$$
with $\dim S/R \le d-2$. Applying the functors local cohomology and Koszul's cohomology we have the following commutative diagram.
\[\divide\dgARROWLENGTH by 2
\begin{diagram}
\node{} \node{} \node{H^{d-1}(\underline{x}; S/R)} \arrow{e}\arrow{s} \node{H^{d-1}_{\frak m}(S/R) = 0}\arrow{s}\\
\node{0}\arrow{e}\node{(\underline{x})^{\lim}/(\underline{x})R} \arrow{e}\arrow{s}\node{H^d(\underline{x};R)}\arrow{e}\arrow{s}
\node{H^d_{\frak m}(R)}\arrow{s}\\
\node{0}\arrow{e}\node{(\underline{x})_S^{\lim}/(\underline{x})S} \arrow{e}
\arrow{e}\node{H^d(\underline{x};S)}\arrow{e}\arrow{s} \node{H^d_{\frak m}(S)}\arrow{s}\\
\node{} \node{} \node{H^{d}(\underline{x}; S/R)} \arrow{e} \node{H^{d}_{\frak m}(S/R) = 0.}
\end{diagram}
\]
Both the second and third rows are exact by Remark \ref{R2.2}. Thus we have the following commutative diagram
\[\divide\dgARROWLENGTH by 2
\begin{diagram}
\node{R/(\underline{x})^{\lim}}\arrow{e,t}{\pi}\arrow{s,l}{\alpha}
\node{H^d_{\frak m}(R)}\arrow{s,l}{\sigma}\\
\node{S/(\underline{x})_S^{\lim}} \arrow{e,t}{\tau}\node{H^d_{\frak m}(S)}
\end{diagram}
\]
with $\pi$ and $\tau$ are injective and $\sigma$ is bijective. The equation $\tau \circ \alpha = \sigma \circ \pi$ implies that the map
$$\alpha: R/(\underline{x})^{\lim} \to S/(\underline{x})_S^{\lim}$$
is injective. Therefore $(\underline{x})^{\lim} = (\underline{x})^{\lim}_{S} \cap R$.
\end{proof}
In the rest of this section we assume that $\dim R = 2$ and $x, y$ is a system of parameters of $R$. Let $U_R(0)$ is the unmixed component of $R$ and $\overline{R} = R/U_R(0)$. By Theorem \ref{thmA} and Proposition \ref{P2.7} we have
$$(x,y)^{\lim} = \bigcup_{n \ge 1} \big( (x^{n+1}, y^{n+1}, U_R(0)) :_R (xy)^n \big)$$
and $\ell(R/(x,y)^{\lim}) = \ell (\overline{R}/(x,y)^{\lim}_{\overline{R}})$. Hence we can reduce to the case of an unmixed ring. In this case we have the $S_2$-ification $S$ of $R$ is Cohen-Macaulay (since $d=2$). Moreover, $H^1_{\frak m}(R)$ is finitely generated (see, \cite{Tr81}) and $S/R \cong H^1_{\frak m}(R)$. We prove the main result of this section as follows.
\begin{proof}[Proof of Theorem \ref{thmE}] (i) follows from Theorem \ref{T5.2} and the Cohen-Macaulayness of $S$.\\
(ii) By (i) the short exact sequence
$$0 \to R \to S \to H^1_{\frak m}(R) \to 0$$
induces the short exact sequence
$$0 \to R/(x,y)^{\lim} \to S/(x,y)S \to H^1_{\frak m}(R)/(x,y)H^1_{\frak m}(R) \to 0.$$
Therefore
$\ell (R/(x,y)^{\lim}) = \ell(S/(x,y)S) - \ell (H^1_{\frak m}(R)/(x,y)H^1_{\frak m}(R))$.
Since $S$ is Cohen-Macaulay we have $\ell(S/(x,y)S) = e(x,y; S) = e(x,y;R)$. Thus we get the assertion.\\
(iii) We first claim that $\ell(R/(x,y)) = e(x,y;R) + \ell(0:_{H^1_{\frak m}(R)}(x,y))$. Indeed, let $R' = R/(x)$. From the short exact sequence
$$0 \to R \overset{x}{\to} R \to R' \to 0$$
we have the exact sequence of local cohomology
$$0 \to H^0_{\frak m}(R') \to H^1_{\frak m}(R) \overset{x}{\to} H^1_{\frak m}(R) \to \cdots.$$
So $H^0_{\frak m}(R') \cong 0:_{H^1_{\frak m}(R)} x$. Since $x$ is a regular
element we have $e(x,y;R) = e(y; R')$. Notice that $\dim R'=1$, we can check that $H^0_{\frak m}(R') \cap (y)R' = yH^0_{\frak m}(R')$. Thus we have the short exact sequence
$$0 \to H^0_{\frak m}(R')/ yH^0_{\frak m}(R') \to R'/(y)R' \to \overline{R}/(y)\overline{R} \to 0,$$
where $\overline{R} = R'/H^0_{\frak m}(R')$. Since $\overline{R}$ is Cohen-Macaulay we have
$$\ell(\overline{R}/(y)\overline{R}) = e(y; \overline{R}) = e(y; R') = e(x,y;R).$$
Therefore
$$\ell(R/(x,y)) = \ell(R'/(y)R') = e(x,y;R) +  \ell(H^0_{\frak m}(R')/ yH^0_{\frak m}(R')).$$
Consider the following exact sequence of finite length modules
$$0 \to 0:_{H^0_{\frak m}(R')}y \to H^0_{\frak m}(R') \overset{y}{\to} H^0_{\frak m}(R') \to H^0_{\frak m}(R')/ yH^0_{\frak m}(R') \to 0.$$
It follows that $\ell(H^0_{\frak m}(R')/ yH^0_{\frak m}(R')) = \ell(0:_{H^0_{\frak m}(R')}y)$. On the other hand, since $H^0_{\frak m}(R') \cong 0:_{H^1_{\frak m}(R)} x$ we have $0:_{H^0_{\frak m}(R')}y \cong 0:_{H^1_{\frak m}(R)}(x,y)$. Thus we have
$$\ell(R/(x,y)) = e(x,y;R) + \ell(0:_{H^1_{\frak m}(R)}(x,y)).$$
Combining the above assertion with (ii) we have
$$\ell((x,y)^{\lim}/(x,y)) = \ell(0:_{H^1_{\frak m}(R)}(x,y)) + \ell (H^1_{\frak m}(R)/(x,y)H^1_{\frak m}(R)).$$
Notice that $0:_{H^1_{\frak m}(R)}(x,y) \cong H^0(x,y;H^1_{\frak m}(R))$ and $H^1_{\frak m}(R)/(x,y)H^1_{\frak m}(R) \cong H^2(x,y;H^1_{\frak m}(R))$. Moreover the Euler charactiristic
$$\chi (x, y; H^1_{\frak m}(R)) = \sum_{i=0}^2 \ell(H^i(x,y;H^1_{\frak m}(R))) = 0$$
since $\dim H^1_{\frak m}(R) = 0 < 2$ (cf. \cite[Theorem 4.7.6]{BH98}). Therefore
$$\ell((x,y)^{\lim}/(x,y)) =\ell(H^1(x,y;H^1_{\frak m}(R))).$$
The proof is complete.
\end{proof}

\section{Some Examples}
The main aim of this section is to compute a certain limit closure. The following example is based on \cite[Example 6.2]{Hu1}.
\begin{example}\label{E5.5}
Let $K$ be a field of characteristic 0 and let $A = K[X,Y,U,V]/(f)$
where $f = XY-UX^2-VY^2$. We denote by $x, y, u, v$ the images of
$X, Y, U, V$, respectively. Set $R = A_{\frak m}$ where $\frak m =
(x, y, u, v)$ and $\frak p = (y,u,v)$. It is easy to prove that $R$
is a Gorenstein three-dimensional domain, and $\frak p$ is a height
two prime ideal of $R$. After completion one can prove that $f$
factors into two formal series $f = (x - vy + \cdots)(y - ux + \cdots)$,
where every term in the element $x-vy+ \cdots$ lies in $(y, u, v)$
except for the first term $x$ (we can prove by induction). These factors is given in more
details in Proposition \ref{P5.9}. There are two minimal
primes lying over $(0)$ in $\widehat{R}$, and $\frak p \widehat{R} +
(x-vy + \cdots) = \widehat{\frak{m}}$. The Lichtenbaum-Hartshore
vanishing theorem implies that $H^3_{\frak p}(R) \neq 0$. By Proposition \ref{P3.3} we have $\mathrm{Ann}\,
H^3_{\frak p}(R) = 0$, so $\cap_{n\geq 1}(y^n,u^n,v^n)^{\lim} = 0$.
However the Hausdorff topology defined by
$\{(y^n,u^n,v^n)^{\lim}\}_{n\geq 1}$ is not equivalent to the $\frak
m$-adic topology. Indeed, by Lemma \ref{L2.5}, $(y^n,u^n,v^n)^{\lim}$
is $\frak m$-primary for all $n$. Hence the $\frak m$-adic topology
is stronger than or equal to the topology defined by $\{(y^n,u^n,v^n)^{\lim}\}_{n\geq 1}$. By Corollary \ref{C5.4} we have $(x-vy+ \cdots) \subseteq
(y^n,u^n,v^n)^{\lim}_{\widehat{R}}$ for all $n\geq 1$. Thus
$(y^n,u^n,v^n)^{\lim}_{\widehat{R}} \nsubseteq \widehat{\frak{m}}^2$
for all $n\geq 1$. Hence $(y^n,u^n,v^n)^{\lim}\nsubseteq
{\frak{m}}^2$ for all $n\geq 1$. This section is devoted to an explicit computation for $(y^n,u^n,v^n)^{\lim}$, $n \ge 1$.
\end{example}

\begin{observation}
Keep all notations as in the previous Example. We have
$(y^n,u^n,v^n)^{\lim}\nsubseteq {\frak{m}}^2$ for all $n\geq 1$. By
the definition of the limit closure for each $n$ there exists $t(n)$
such that
$$(y^{nt(n)+ n},u^{nt(n)+n},v^{nt(n)+ n}):_R
(yuv)^{nt(n)} \nsubseteq {\frak{m}}^2.$$
For all $n = 1, \ldots, 9$, by using computer program we obtain 
$$(y^{2n},u^{2n},v^{2n}):_R(yuv)^n =
(y^n,u^n,v^n,a_n),$$
where $a_n$ is as the following.\\

\begin{tabular}{|c|c|}
  \hline
  $n$ & $a_n$ \\
  \hline
  $1$ & $x$ \\
\hline
  $2$ & $x-yv$ \\
\hline
  $3$ & $x-yv-yuv^2$ \\
\hline
  $4$ & $x-yv-yuv^2 - 2yu^2v^3$ \\
\hline
  $5$ & $x-yv-yuv^2 - 2yu^2v^3 - 5yu^3v^4$ \\
\hline
  $6$ & $x-yv-yuv^2 - 2yu^2v^3 - 5yu^3v^4 - 14u^4v^5$ \\
\hline
  $7$ & $x-yv-yuv^2 - 2yu^2v^3 - 5yu^3v^4 - 14u^4v^5 - 42yu^5v^6$ \\
\hline
  $8$ & $x-yv-yuv^2 - 2yu^2v^3 - 5yu^3v^4 - 14u^4v^5 - 42yu^5v^6 - 132yu^6v^7$ \\
\hline
  $9$ & $x-yv-yuv^2 - 2yu^2v^3 - 5yu^3v^4 - 14u^4v^5 - 42yu^5v^6 - 132yu^6v^7 - 429yu^7v^8$ \\
\hline
\end{tabular}\\
\newline
Following from the above table, we consider the sequence 1, 1, 2, 5, 14, 42,
132, 429. This is the first nine terms, from $C_0$ to $C_8$, of the Catalan sequence.
\end{observation}
\begin{definition}\rm The {\it Catalan numbers}, $C_i$ ($i\geq 0$),
are numbers satisfy the recurrence relation
$$C_0 = 1\,\, \text{and}\,\,\, C_{n+1} = \sum_{i=0}^n C_iC_{n-i}\,\,\,\text{for all}\,\, n \geq 0.$$
\end{definition}
The following is well-known.
\begin{lemma}\label{catalan}
Let $C(t) = \sum_{i=0}^{\infty}C_it^i$ be the generating function
for Catalan numbers. Then
$$C(t) = 1 + tC(t)^2.$$
\end{lemma}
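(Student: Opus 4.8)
The plan is to treat $C(t)$ purely as a formal power series and to compute its square via the Cauchy product, then match coefficients against the defining recurrence. First I would write
$$C(t)^2 = \left(\sum_{i \geq 0} C_i t^i\right)\left(\sum_{j \geq 0} C_j t^j\right) = \sum_{n \geq 0}\left(\sum_{i=0}^n C_i C_{n-i}\right)t^n,$$
so that the coefficient of $t^n$ in $C(t)^2$ is exactly the convolution $\sum_{i=0}^n C_i C_{n-i}$. The whole point is that this convolution is precisely the quantity appearing in the recurrence relation.

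Next I would invoke the defining relation $C_{n+1} = \sum_{i=0}^n C_i C_{n-i}$, valid for every $n \geq 0$, to rewrite the coefficient above as $C_{n+1}$. This yields
$$C(t)^2 = \sum_{n \geq 0} C_{n+1}\, t^n.$$
The final step is an index shift: multiplying by $t$ gives
$$t\, C(t)^2 = \sum_{n \geq 0} C_{n+1}\, t^{n+1} = \sum_{m \geq 1} C_m\, t^m = C(t) - C_0 = C(t) - 1,$$
where the last equality uses $C_0 = 1$. Rearranging produces $C(t) = 1 + t\, C(t)^2$, as desired.

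I do not expect any serious obstacle, as this is a standard generating-function identity; the only points requiring care are the two pieces of index bookkeeping (first recognizing the convolution in the Cauchy product, then shifting by one after multiplying by $t$) and the observation that all the manipulations are identities of formal power series, so that no question of convergence enters the argument.
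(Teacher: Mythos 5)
Your proof is correct. Note that the paper itself offers no proof of this lemma at all --- it simply declares it ``well-known'' --- so there is nothing to compare against; your argument (Cauchy product, recognition of the convolution as $C_{n+1}$ via the paper's defining recurrence, then the index shift after multiplying by $t$) is precisely the standard formal-power-series verification one would supply, and every step is valid, including the observation that no convergence issues arise since everything is an identity of formal power series.
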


\begin{proposition}\label{P5.9}
With notations as in Example \ref{E5.5}, and $C(t) =
\sum_{i=0}^{\infty}C_it^i$ is the generating function for Catalan numbers. Then
$$(X - YV C(UV)) . (Y - XU C(UV)) = (XY - UX^2 - VY^2) . C(UV).$$
Therefore $(x - yv C(uv)) \cap (y - xu C(uv)) = (0)$ is a reduced
primary decomposition of $(0)$ in $\widehat{R}$.
\end{proposition}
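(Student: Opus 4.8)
The plan is to establish the power series identity first, treating it as a purely formal consequence of the defining relation of the Catalan generating function, and then to read off the primary decomposition by transporting the resulting factorization of $f$ into the regular local ring $S = K[[X,Y,U,V]]$, where $\widehat{R} = S/(f)$.

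For the identity I would abbreviate $c = C(UV)$ and invoke Lemma \ref{catalan} in the form $c = 1 + UV c^2$. Expanding the left-hand side gives
$$(X - YVc)(Y - XUc) = XY - UX^2 c - VY^2 c + XY\,UV c^2,$$
while the right-hand side is $(XY - UX^2 - VY^2)c = XY\,c - UX^2 c - VY^2 c$. The terms $-UX^2c$ and $-VY^2c$ agree, so the identity reduces to $XY + XY\,UVc^2 = XY\,c$, i.e. to $XY(1+UVc^2) = XY\,c$, which is exactly the relation $c = 1 + UVc^2$ multiplied by $XY$. Thus the whole identity is a single substitution away from Lemma \ref{catalan}.

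For the primary decomposition I would set $g_1 = X - YV\,C(UV)$ and $g_2 = Y - XU\,C(UV)$. Since the constant term of $C(UV)$ is $C_0 = 1$, the series $C(UV)$ is a unit in $S$; hence the identity $g_1 g_2 = f\cdot C(UV)$ shows that $(f) = (g_1 g_2)$ as ideals of $S$, so $\widehat{R} = S/(f) = S/(g_1 g_2)$. The linear part of $g_1$ is $X$ and that of $g_2$ is $Y$, so each $g_i$ is a regular parameter, $S/(g_i)$ is a three-dimensional regular local ring and in particular a domain; therefore $(g_1)$ and $(g_2)$ are prime, and they are non-associate because their linear parts differ. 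The key reduction is $(g_1 g_2) = (g_1)\cap(g_2)$: if $h = a g_1 = b g_2$, then $a g_1 \in (g_2)$, and since $g_1$ is a nonzerodivisor modulo the prime $(g_2)$ we get $a \in (g_2)$, whence $h \in (g_1 g_2)$; the reverse inclusion is automatic.

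Passing to $\widehat{R}$, the images $P_1 = (g_1)/(g_1 g_2)$ and $P_2 = (g_2)/(g_1 g_2)$ are precisely the principal ideals $(x - yv\,C(uv))$ and $(y - xu\,C(uv))$, each prime because $\widehat{R}/P_i \cong S/(g_i)$ is a domain, and $P_1 \cap P_2 = (0)$ by the previous step. As $P_1$ and $P_2$ are distinct minimal primes neither contains the other, so $(0) = P_1 \cap P_2$ is an irredundant (reduced) primary decomposition, which also recovers the two minimal primes of $\widehat{R}$ recorded in Example \ref{E5.5}. I expect the only step requiring care to be the equality $(g_1 g_2) = (g_1)\cap(g_2)$, which rests on the primality of the two factors; everything else is formal once one knows Lemma \ref{catalan} and that $C(UV)$ is a unit.
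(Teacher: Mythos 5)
Your proposal is correct and follows essentially the same route as the paper: the identity is obtained by exactly the same expansion combined with the functional equation $C(t) = 1 + tC(t)^2$, and your second part is precisely the verification that the paper compresses into ``it is easy to check that the two elements generate prime ideals with intersection $(0)$.'' Your filled-in details (that $C(UV)$ is a unit in $K[[X,Y,U,V]]$, that $g_1, g_2$ are regular parameters hence generate distinct primes, and that primality yields $(g_1g_2) = (g_1)\cap(g_2)$) are all sound.
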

\begin{proof} We have
\begin{eqnarray*}
(X - YV C(UV)) . (Y - XU C(UV)) &=& XY - (UX^2 + VY^2) . C(UV) + XY UV .
C(UV)^2 \\
&=& XY . (1 + UV C(UV)^2) - (UX^2 + VY^2) . C(UV)\\
&=& (XY - UX^2 - VY^2) . C(UV).\quad (\text{By Lemma } \ref{catalan})
\end{eqnarray*}
Hence $(x - yv C(uv)).(y - xu C(uv)) = 0 \in \widehat{R}$. It is
easy to check that $(x - yv C(uv)), (y - xu C(uv))$ are prime ideals
and $(x - yv C(uv)) \cap (y - xu C(uv)) = (0)$.
\end{proof}

The following is the main result of this section.
\begin{proposition}\label{P5.10}
Let $K$ be a field of characteristic 0 and let $A = K[X,Y,U,V]/(f)$
where $f = XY-UX^2-VY^2$. We denote by $x, y, u, v$ the images of
$X, Y, U, V$, respectively. Set $R = A_{\frak m}$ where $\frak m =
(x, y, u, v)$. Then for all $n \geq 1$ we have
$$(y^n,u^n,v^n)^{\lim} = (y^n,u^n,v^n, x- yv \sum_{i=0}^{n-2}C_i(uv)^i),$$
where $C_i$ is the $i$-th Catalan number.
\end{proposition}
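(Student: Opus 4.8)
The plan is to push the whole computation into the completion $\widehat{R}$ and there exploit the explicit factorization of Proposition \ref{P5.9}. Write $\phi = x - yv\,C(uv)$ and $\psi = y - xu\,C(uv)$ for the two factors, so that $\widehat{R} = K[[x,y,u,v]]/(\phi\psi)$ and $(0) = (\phi)\cap(\psi)$ is a reduced primary decomposition of $(0)$ in $\widehat{R}$. Since limit closure commutes with completion, i.e. $(y^n,u^n,v^n)^{\lim}\widehat{R} = (y^n,u^n,v^n)^{\lim}_{\widehat{R}}$, and $\widehat{R}$ is faithfully flat over $R$ with $a_n\in R$, contracting back to $R$ shows that the asserted equality follows once I prove it in $\widehat{R}$, namely $(y^n,u^n,v^n)^{\lim}_{\widehat{R}} = (y^n,u^n,v^n,a_n)\widehat{R}$. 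Moreover the tail $\phi - a_n = -yv\sum_{i\ge n-1}C_i(uv)^i$ has every term $-C_i\,y\,u^iv^{i+1}$ divisible by $v^n$ (when $i=n-1$) or by $u^n$ (when $i\ge n$), so $\phi \equiv a_n \pmod{(y^n,u^n,v^n)}$ in $\widehat{R}$. Thus it suffices to prove $(y^n,u^n,v^n)^{\lim}_{\widehat{R}} = (y^n,u^n,v^n,\phi)$.

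For the inclusion $\supseteq$ I would apply Corollary \ref{C5.4} to $M=\widehat{R}$ and the sequence $y,u,v$. The associated primes of $\widehat{R}$ are $(\phi)$ and $(\psi)$; here $\widehat{R}/(\phi)\cong K[[y,u,v]]$, on which $y,u,v$ is a system of parameters, whereas $\widehat{R}/(\psi)\cong K[[x,u,v]]$ sends $(y,u,v)$ onto the height-two ideal $(u,v)$, so $y,u,v$ is not a system of parameters there. Hence Corollary \ref{C5.4} gives $\bigcap_{m\ge 1}(y^m,u^m,v^m)^{\lim}_{\widehat{R}} = (\phi)$. In particular $\phi\in(y^n,u^n,v^n)^{\lim}_{\widehat{R}}$ for every $n$, and combined with the trivial inclusion $(y^n,u^n,v^n)\subseteq(y^n,u^n,v^n)^{\lim}_{\widehat{R}}$ this yields $(y^n,u^n,v^n,\phi)\subseteq(y^n,u^n,v^n)^{\lim}_{\widehat{R}}$.

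The reverse inclusion is where the factorization does the real work, and I expect it to be the crux. Let $z\in(y^n,u^n,v^n)^{\lim}_{\widehat{R}}$, so $z(yuv)^{nk}\in(y^{n(k+1)},u^{n(k+1)},v^{n(k+1)})\widehat{R}$ for some $k$. Reducing this relation modulo $\phi$ in $R_1 := \widehat{R}/(\phi)\cong K[[y,u,v]]$ shows that the image $\bar z$ lies in $(y^n,u^n,v^n)^{\lim}_{R_1}$. But in the regular local ring $R_1$ the elements $y^n,u^n,v^n$ form a regular sequence, so by Remark \ref{R2.2}(i) one has $(y^n,u^n,v^n)^{\lim}_{R_1} = (y^n,u^n,v^n)R_1$. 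Therefore $\bar z\in(y^n,u^n,v^n)R_1$, i.e. $z\in(y^n,u^n,v^n,\phi)\widehat{R}$, which is precisely the inclusion sought.

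Putting the two inclusions together gives $(y^n,u^n,v^n)^{\lim}_{\widehat{R}} = (y^n,u^n,v^n,\phi) = (y^n,u^n,v^n,a_n)\widehat{R}$, and contracting along the faithfully flat map $R\to\widehat{R}$ (using $a_n\in R$) recovers $(y^n,u^n,v^n)^{\lim} = (y^n,u^n,v^n,a_n)$. The only genuinely delicate points are the identification of $\widehat{R}$ and its two minimal primes through Proposition \ref{P5.9}, and the check that $y,u,v$ is a system of parameters on exactly one of the two components $\widehat{R}/(\phi)$, $\widehat{R}/(\psi)$; once these are settled, the Catalan tail estimate for $\phi-a_n$ and the passage to the regular quotient $R_1$ are routine.
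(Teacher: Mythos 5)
Your proof is correct, but it finishes the argument by a genuinely different mechanism than the paper. You work entirely in $\widehat{R}=K[[X,Y,U,V]]/(\phi\psi)$ (with $\phi=x-yvC(uv)$, $\psi=y-xuC(uv)$), observe that $\phi\equiv a_n \pmod{(y^n,u^n,v^n)}$, and then prove the two inclusions directly: $\supseteq$ via Corollary \ref{C5.4} (which gives $\bigcap_m(y^m,u^m,v^m)^{\lim}_{\widehat{R}}=(\phi)$, since $y,u,v$ is a system of parameters on $\widehat{R}/(\phi)\cong K[[Y,U,V]]$ but not on $\widehat{R}/(\psi)\cong K[[X,U,V]]$), and $\subseteq$ by reducing the defining colon relation modulo $\phi$ into the regular quotient, where Remark \ref{R2.2}(i) forces the limit closure to collapse to the ideal itself; faithful flatness of $R\to\widehat{R}$ then contracts both sides back to $R$. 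The paper instead proves the containment $(y^n,u^n,v^n,a_n)\subseteq(y^n,u^n,v^n)^{\lim}$ inside $R$ itself by the explicit Catalan identity (Lemma \ref{L5.11}, showing this ideal already sits in the single colon $(y^{2n},u^{2n},v^{2n}):(yuv)^n$), and then upgrades the inclusion to an equality by a length count: both quotients have length $n^3$, the left side because $f\in(Y^n,U^n,V^n,A_n)$ reduces the computation to a polynomial ring, the right side by the same reduction you use (Corollary \ref{C5.4} plus Proposition \ref{P2.7} passing to $\widehat{R}/(\phi)$). What your route buys is economy: you bypass both the computational Lemma \ref{L5.11} and the length comparison, and your mod-$\phi$ argument for $\subseteq$ is conceptually transparent. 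What the paper's route buys is the extra quantitative information $\ell(R/(y^n,u^n,v^n)^{\lim})=n^3$ and the sharper statement that the containment holds at the very first step of the union defining the limit closure, without passing through the completion.
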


\begin{lemma}\label{L5.11}
Keep all notations as in the previous Proposition. Then for all $n
\geq 1$ we have
$$(y^n,u^n,v^n, x- yv \sum_{i=0}^{n-2}C_i(uv)^i) \subseteq (y^{2n},u^{2n},v^{2n}):_R (yuv)^n.$$
\end{lemma}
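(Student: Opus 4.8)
The plan is to establish the containment generator by generator: the left-hand ideal is generated by $y^n$, $u^n$, $v^n$ and $a_n := x - yv\sum_{i=0}^{n-2}C_i(uv)^i$, so it suffices to check that each of these four elements, when multiplied by $(yuv)^n$, lands in $(y^{2n},u^{2n},v^{2n})$. For the three monomial generators this is immediate, since e.g. $y^n(yuv)^n = y^{2n}(uv)^n \in (y^{2n})$, and symmetrically for $u^n$ and $v^n$. Thus all the content lies in $a_n$, and I would reduce the problem to showing $a_n(yuv)^n \in (u^{2n},v^{2n})$.

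For this I would pass to $\widehat R$ and use the explicit factorization from Proposition \ref{P5.9}. Writing $C = C(uv)$ for the Catalan generating series and setting $g = x - yvC$, $h = y - xuC$, that proposition yields $gh = fC = 0$ in $\widehat R$, since $f = 0$ in $R$. Putting $b_n = \sum_{i=0}^{n-2}C_i(uv)^i$ so that $a_n = x - yv\,b_n$, I would decompose
$$a_n = g + yv\,(C - b_n), \qquad C - b_n = \sum_{i\ge n-1}C_i(uv)^i,$$
and bound the two resulting contributions to $a_n(yuv)^n$ separately.

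The crux is the piece $g(yuv)^n$. Here the key device is that $gh = 0$ forces $gy = (xuC)\,g$, which iterates to $g\,y^n = (xuC)^n g = x^n u^n C^n\,g$; hence
$$g\,(yuv)^n = x^n u^{2n} v^n C^n\,g \in (u^{2n})\widehat R.$$
For the remaining piece, pulling $(uv)^{n-1}$ out of $C - b_n$ shows that every term of $yv\,(C-b_n)(yuv)^n$ carries a factor $v^{2n}$, so it lies in $(v^{2n})\widehat R$. Combining, $a_n(yuv)^n \in (u^{2n},v^{2n})\widehat R$.

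To conclude I would descend from $\widehat R$ back to $R$: the element $a_n(yuv)^n$ lies in $R$, and faithful flatness of completion gives $(u^{2n},v^{2n})\widehat R \cap R = (u^{2n},v^{2n})R$, whence $a_n(yuv)^n \in (u^{2n},v^{2n}) \subseteq (y^{2n},u^{2n},v^{2n})$. I expect the main obstacle to be the bookkeeping with the power series $C$ — extracting the exact $u$- and $v$-divisibility from the two pieces before descending — together with the cleverness needed to spot the relation $gy = (xuC)g$, which is precisely what converts the Catalan factorization into the desired divisibility by $u^{2n}$.
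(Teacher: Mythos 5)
Your proof is correct, and its engine is the same as the paper's: the factorization $gh=0$ from Proposition \ref{P5.9} is iterated to trade powers of $y$ for powers of $u$ — your identity $gy^n=(xuC)^ng$ is precisely the paper's iteration step. The packaging, however, differs. The paper never isolates the exact power-series factor $g$ in $\widehat R$: it first observes that it suffices to prove $a_ny^n\in(u^n,v^n)$, since multiplying by $(uv)^n$ then automatically upgrades this to $a_n(yuv)^n\in(u^{2n},v^{2n})$, and it runs the whole iteration with the truncated polynomial $b_n=\sum_{i=0}^{n-2}C_i(uv)^i$ in place of $C$, working modulo $(u^n,v^n)$ throughout: from $a_n\,(y-xub_n)\equiv 0$ it deduces $a_ny^n\equiv a_n(xub_n)^n\equiv 0 \pmod{(u^n,v^n)}$. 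You instead keep the exact factor, split $a_n=g+yv\,(C-b_n)$, extract $u^{2n}$ from the iterated relation and $v^{2n}$ from the tail term, and descend to $R$ by faithful flatness ($I\widehat R\cap R=I$). The paper's truncation keeps every actor an honest element of $R$ and makes the error terms invisible; your splitting makes the two sources of divisibility completely explicit, at the cost of an extra descent step — though that step is also silently present in the paper, since its congruence modulo $(u^n,v^n)$ is itself justified by the factorization in $\widehat R$. Both arguments are complete and correct.
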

\begin{proof} It suffices to prove for all $n\geq 1$ that
$$(x- yv \sum_{i=0}^{n-2}C_i(uv)^i)y^n \in (u^n,v^n).$$
By Proposition \ref{P5.9} we have $(x - yv C(uv)).(y - xu C(uv)) =
0$. Hence
$$(x- yv \sum_{i=0}^{n-2}C_i(uv)^i) . (y - xu \sum_{i=0}^{n-2}C_i(uv)^i) \equiv 0 \mod (u^n,v^n).$$
Therefore
$$(x- yv \sum_{i=0}^{n-2}C_i(uv)^i) .  y \equiv (x- yv \sum_{i=0}^{n-2}C_i(uv)^i) . (xu \sum_{i=0}^{n-2}C_i(uv)^i) \mod (u^n,v^n).$$
So
\begin{eqnarray*}
(x- yv \sum_{i=0}^{n-2}C_i(uv)^i) . y^n\,
&\equiv & (x- yv \sum_{i=0}^{n-2}C_i(uv)^i) . (xu \sum_{i=0}^{n-2}C_i(uv)^i)^n\mod (u^n,v^n)\\
&\equiv & 0 \mod (u^n,v^n).
\end{eqnarray*}
The Lemma is proved.
\end{proof}

\begin{proof}[Proof of Proposition \ref{P5.10}] By Lemma
\ref{L5.11}, for all $n\geq 1$ we have
$$(y^n,u^n,v^n, x- yv \sum_{i=0}^{n-2}C_i(uv)^i) \subseteq (y^n,u^n,v^n)^{\lim}.$$
By Proposition \ref{P5.9} we have
$$(X- YV \sum_{i=0}^{n-2}C_i(UV)^i) . (Y - XU \sum_{i=0}^{n-2}C_i(UV)^i) \equiv (XY - UX^2 - VY^2) . C(UV) \mod (U^n,V^n).$$
So $f \in (Y^n,U^n,V^n, X- YV \sum_{i=0}^{n-2}C_i(UV)^i)$. Hence
\begin{eqnarray*}
\ell (R/(y^n,u^n,v^n, x- yv \sum_{i=0}^{n-2}C_i(uv)^i) &=&
\ell (K[X,Y,U,V])/(Y^n,U^n,V^n, X- YV \sum_{i=0}^{n-2}C_i(UV)^i)\\
&=& n^3.
\end{eqnarray*}

\noindent On the other hand
 $$\ell (R/(y^n,u^n,v^n)^{\lim}) = \ell
(\widehat{R}/(y^n,u^n,v^n)^{\lim}_{\widehat{R}}).$$
By
Corollary \ref{C5.4} we have $\cap_{n\geq 1}
(y^n,u^n,v^n)^{\lim}_{\widehat{R}} = (x - yv C(uv))$. Set $R' =
\widehat{R} / (x - yv C(uv))$. By Proposition \ref{P2.7} we have
\begin{eqnarray*}
\ell
(\widehat{R}/(y^n,u^n,v^n)^{\lim}_{\widehat{R}}) &=& \ell
({R'}/(y^n,u^n,v^n)^{\lim}_{R'})\\
&=& \ell (K[[Y,U,V]]/(Y^n, U^n, V^n))
= n^3.
\end{eqnarray*}
 Therefore
$$\ell (R/(y^n,u^n,v^n, x- yv \sum_{i=0}^{n-2}C_i(uv)^i)) = \ell
(R/(y^n,u^n,v^n)^{\lim}).$$
Hnce
$$(y^n,u^n,v^n)^{\lim} = (y^n,u^n,v^n, x- yv \sum_{i=0}^{n-2}C_i(uv)^i)$$
for all $n \ge 1$. The proof is complete.
\end{proof}
The next example shows that the complete condition of $(R,\frak m)$ in Corollary \ref{C5.4} is necessary.
\begin{example}\label{E7.8}\rm
Let $k$ be a field of characteristic 0 and let $A =
K[X,Y,U,V]/((f)\cap (X))$ where $f = XY-UX^2-VY^2$. We denote by $x,
y, u, v$ the images of $X, Y, U, V$, respectively. Set $R = A_{\frak
m}$ where $\frak m = (x, y, u, v)$. By
Proposition \ref{P5.9}
$$(x) \cap (x - yv C(uv)) \cap (y - xu
C(uv)) = (0)$$
is a reduced primary decomposition of $(0)$ in
$\widehat{R}$. Corollary \ref{C5.4} implies that
$$\cap_{n \geq
1}(y^n,u^n,v^n)^{\lim}_{\widehat{R}} = (x) \cap (x - yv C(uv)).$$
So $\cap_{n \geq 1}(y^n,u^n,v^n)^{\lim} = (0)$. But
$$J = \{\frak p \in
\mathrm{Ass}R\mid y,u,v\, \text{ is a system of parameters of}\,
R/\frak p\} = \{(x)\}$$ and $\cap_{\frak p \cap\in J}N(\frak p) =
(x)\neq (0)$.
\end{example}

\end{document}